\numberwithin{equation}{section}
\newcommand{\R}{\mathbb R}
\newcommand{\N}{\mathbb N}
\newcommand{\X}{\mathcal{D}^{s,2}_0(\Omega)}
\newcommand{\epsi}{\varepsilon}
\newcommand{\slap}{(-\Delta)^s}
\def\XXint#1#2#3{{\setbox0=\hbox{$#1{#2#3}{\int}$} \vcenter{\vspace{-1pt}\hbox{$#2#3$}}\kern-.5\wd0}}
\def\Xint#1{\mathchoice {\XXint\displaystyle\textstyle{#1}}{\XXint\textstyle\scriptstyle{#1}}{\XXint\scriptstyle\scriptscriptstyle{#1}}{\XXint\scriptscriptstyle\scriptscriptstyle{#1}}\!\int}
\def\intmed{\Xint{-}}
\newtheorem{theorem}{Theorem}[section]
\newtheorem{proposition}[theorem]{Proposition}
\newtheorem{lemma}[theorem]{Lemma}
\newtheorem{corollary}[theorem]{Corollary}
\newtheorem{thmx}{Theorem}
\theoremstyle{definition}
\newtheorem{definition}[theorem]{Definition}
\newtheorem*{ack}{Acknowledgments}
\newtheorem*{plan}{Plan of the paper}
\theoremstyle{remark}
\newtheorem{remark}[theorem]{Remark}
\DeclareMathOperator{\dist}{dist}
\DeclareMathOperator{\Tail}{Tail}
\DeclareMathOperator*{\ess}{ess\,sup}
\DeclarePairedDelimiter\abs{\lvert}{\rvert}
\DeclarePairedDelimiter\norm{\lVert}{\rVert}
\DeclarePairedDelimiter\tonda{(}{)}
\DeclarePairedDelimiter\quadra{[}{]}
\DeclarePairedDelimiter\graffa{\{}{\}}
\renewcommand{\phi}{\varphi}
\title{A non-local semilinear eigenvalue problem}
\author[G. Franzina]{Giovanni Franzina}
\address[G. Franzina]{Istituto per le Applicazioni del Calcolo ``M. Picone''
\newline\indent
Consiglio Nazionale delle Ricerche
\newline\indent 
Via dei Taurini 19, 00185 Roma, Italy}
\email{giovanni.franzina@cnr.it}
\author[D. Licheri]{Danilo Licheri}
\address[D. Licheri]{Dipartimento di Matematica e Informatica
\newline\indent
Università degli Studi di Cagliari
\newline\indent
Via Ospedale 72, 09124 Cagliari, Italy}
\email{dani.licheri97@gmail.com}
\subjclass[2020]{35P30, 49R05}
\keywords{Eigenvalues, constrained critical points, Lane-Emden equation.}
\begin{document}

\maketitle

\begin{abstract}
For a non-local semilinear eigenvalue problem, we prove simplicity and isolation of the first eigenvalue with homogeneous Dirichlet boundary conditions on open sets supporting a suitable compact Sobolev embedding.
\end{abstract}


\begin{center}
\begin{minipage}{10cm}
\small
\tableofcontents
\end{minipage}
\end{center}

\section{Introduction}
This paper concerns a semilinear eigenvalue
problem for the fractional Laplace operator with homogeneous Dirichlet
boundary conditions in $N$-dimensional Euclidean spaces with applications
to a model for non-local filtration in a porous medium. We recall that, given $s\in(0,1)$, the $s$-Laplacian of a smooth
function $u$ on $\R^N$ is defined, up to a normalisation constant depending only on $N$ and $s$, by the formula
\begin{equation}
\label{2}
(-\Delta)^su(x) = \lim_{\epsi\to0^+} \int_{\R^N\setminus B_\epsi(x)} \frac{u(x)-u(y)}{\abs{x-y}^{N+2s}}\,dy
\end{equation}

The right hand side is usually
multiplied by the quantity $4^s \Gamma\tonda*{\frac{N}{2}+s}/\tonda*{\pi^{N/2}|\Gamma(-s)|}$, which has a precise degenerate behaviour both as $s\to0^+$ and as $s\to1^-$. The specific normalisation
choice has no bearing for the matter of this paper and will be, therefore, omitted.

By classical spectral theory in Hilbert spaces, it is known that the eigenvalue problem
\[
	(-\Delta)^s u=\lambda u
\]
in a bounded open set $\Omega\subset\R^N$, 
with Dirichlet conditions $u=0$ in the complement $\R^N\setminus\Omega$, has non trivial solutions for a discrete set of real numbers $\lambda$, which either is empty
or consists of an unbounded non-decreasing sequence of \emph{eigenvalues}.
The corresponding \emph{eigenfunctions} are the stationary points of the double integral
\begin{equation}
\label{1}
	\int_{\R^N}\int_{\R^N} \frac{(u(x)-u(y))^2}{\abs{x-y}^{N+2s}}\,dx\,dy
\end{equation}
subject to an $L^2(\Omega)$-constraint. 

The variational problem under an $L^q(\Omega)$-constraint, with
$q\neq2$, leads one to a different
non-local semilinear elliptic boundary value problem, formally
 \begin{equation}\label{2qlambda}
\begin{cases*}		
\slap u=\lambda \norm u_{L^q(\Omega)}^{2-q}\abs u^{q-2}u & in $\Omega$\\
u=0 & in $\R^N\setminus\Omega$
\end{cases*}
\end{equation}
Any fixed solution $u$ of \eqref{2qlambda}, if multiplied by a specific constant depending on $u$, solves 
the \emph{fractional Lane-Emden equation} 
\begin{equation}
\label{LE}
	\slap u = \abs u^{q-2}u \qquad \text{in $\Omega$}
\end{equation}
with $u=0$ in $\R^N\setminus\Omega$. 

The largest lower bound for the collection $\mathfrak{S}(\Omega,s,q)$ of all positive numbers $\lambda$ for which
\eqref{2qlambda} admits a non-trivial solution is called the \emph{first $q$-semilinear $s$-eigenvalue}
\begin{equation}
\label{lambda1}
	\lambda_1(\Omega,s,q)=\inf_{\varphi\in C^\infty_0(\Omega)}\graffa*{
	\int_{\R^N}\int_{\R^N} \frac{(\varphi(x)-\varphi(y))^2}{\abs{x-y}^{N+2s}}\,dx\,dy :
	\int_\Omega{\abs\varphi^q\,dx}=1}
\end{equation}
In some cases, for example whenever $\Omega$ has finite $N$-dimensional volume,
the embedding $\X\hookrightarrow L^q(\Omega)$ is compact, which assures
the infimum to be achieved.

For $q\in(1,2)$, in fact,
a necessary and \emph{sufficient} condition that the embedding be compact is that it be continuous (see \cite[Theorem 1.3]{F19}). Hence,
we have the following existence and uniqueness result.

\begin{thmx}\label{teoa} 
Let $N\ge1$, $s\in(0,1)$, $q\in(1,2)$ and let $\Omega\subset\R^N$ be an open set with $\lambda_1(\Omega,s,q)>0$.
Up to a multiplicative constant,
there exists a unique eigenfunction achieving the minimum in \eqref{lambda1}.
The first eigenfunction has constant sign, and the first eigenvalue
is the unique one admitting eigenfunctions with this property.
\end{thmx}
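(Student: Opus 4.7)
The hypothesis $\lambda_1(\Omega,s,q)>0$ makes the embedding $\X\hookrightarrow L^q(\Omega)$ continuous, and \cite[Theorem~1.3]{F19} upgrades this to compactness for $q\in(1,2)$. A minimizing sequence $(\varphi_n)$ for \eqref{lambda1} with $\norm{\varphi_n}_{L^q(\Omega)}=1$ is then bounded in $\X$ and, up to subsequence, converges weakly in $\X$ and strongly in $L^q(\Omega)$ to some $u$ with $\norm{u}_{L^q(\Omega)}=1$; the weak lower semicontinuity of the Gagliardo seminorm (the square root of \eqref{1}, which I denote $[\cdot]_{s,2}$) makes $u$ a minimizer. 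The pointwise bound $\abs{\abs{u(x)}-\abs{u(y)}}\le\abs{u(x)-u(y)}$ shows that $\abs{u}$ is a minimizer too, so we may take $u\ge0$; then $u$ solves \eqref{2qlambda} with $\lambda=\lambda_1(\Omega,s,q)$, and the strong maximum principle for $\slap$ upgrades this to $u>0$ in $\Omega$.

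\textbf{Uniqueness via hidden convexity.} For positive $u_0,u_1\in\X$, I would introduce the curve $w_t=\tonda*{(1-t)u_0^2+tu_1^2}^{1/2}$, $t\in[0,1]$, and rely on two inequalities. The first is the \emph{hidden convexity} of the Dirichlet form,
\begin{equation*}
[w_t]_{s,2}^2\le(1-t)[u_0]_{s,2}^2+t[u_1]_{s,2}^2,
\end{equation*}
with equality iff $u_0/u_1$ is a.e.\ constant; it reduces to the pointwise Cauchy--Schwarz-type estimate $(w_t(x)-w_t(y))^2\le(1-t)(u_0(x)-u_0(y))^2+t(u_1(x)-u_1(y))^2$. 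The second, valid because $q<2$ makes $x\mapsto x^{q/2}$ concave, is Jensen's inequality $\int_\Omega w_t^q\,dx\ge(1-t)\int_\Omega u_0^q\,dx+t\int_\Omega u_1^q\,dx$, strict unless $u_0=u_1$ a.e. If $u_0,u_1$ are both $L^q(\Omega)$-normalized first eigenfunctions with $u_0\ne u_1$, the combined inequalities yield $[w_t]_{s,2}^2/\norm{w_t}_{L^q(\Omega)}^2<\lambda_1(\Omega,s,q)$, contradicting \eqref{lambda1}.

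\textbf{Only $\lambda_1$ has sign-constant eigenfunctions.} A sign-constant eigenfunction may be taken positive up to a sign, so let $u>0$ be an eigenfunction for $\lambda\in\mathfrak S(\Omega,s,q)$; testing \eqref{2qlambda} with $u$ gives $[u]_{s,2}^2=\lambda\norm{u}_{L^q(\Omega)}^2$. Normalizing $\norm{u}_{L^q(\Omega)}=1$, I compare $u$ with the normalized positive first eigenfunction $v$ along the same curve $w_t$. If $u\ne v$, strict inequality in both ingredients above gives
\begin{equation*}
\frac{[w_t]_{s,2}^2}{\norm{w_t}_{L^q(\Omega)}^2}<(1-t)\lambda_1(\Omega,s,q)+t\lambda\qquad\text{for every $t\in(0,1)$,}
\end{equation*}
while the Rayleigh quotient equals $\lambda$ at $t=1$. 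Since $u$ is a critical point of the Rayleigh quotient, the derivative of the left-hand side at $t=1$ along $\partial_t w_t|_{t=1}$ must vanish; yet the strict inequality above produces a left derivative at least $\lambda-\lambda_1(\Omega,s,q)$, so $\lambda=\lambda_1(\Omega,s,q)$.

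\textbf{Main obstacle.} The analytical core is the hidden convexity inequality and its rigidity characterization---a subtle feature of the fractional Dirichlet form---together with justifying the derivative computation along $w_t$ at $t=1$, which requires checking that the family $w_t$ is admissible in $\X$ and that its tangent there is an admissible direction for the weak Euler--Lagrange equation.
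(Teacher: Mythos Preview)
Your existence/sign paragraph and your uniqueness argument are correct. For simplicity you run the ``hidden convexity'' machinery along the $\ell^2$-path $w_t=((1-t)u_0^2+tu_1^2)^{1/2}$ and recover the lost constraint via the concavity of $\tau\mapsto\tau^{q/2}$; the paper instead takes the $\ell^q$-path $\sigma_t=(t\,u^q+(1-t)v^q)^{1/q}$, which keeps $\|\sigma_t\|_{L^q}=1$ exactly and pushes the convexity entirely onto the Dirichlet side. Both routes yield the same rigidity (equality forces proportionality, hence equality after normalisation), so this is a harmless variant.

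The third part, however, has a genuine gap that you correctly flag but do not close. Your argument needs the Rayleigh quotient $t\mapsto R(w_t)$ to be differentiable at $t=1$ with derivative zero, and for that the tangent
\[
\partial_t w_t\big|_{t=1}=\frac{u^2-v^2}{2u}=\frac{u}{2}-\frac{v^2}{2u}
\]
must be an admissible test function in $\X$. Under the sole hypothesis $\lambda_1(\Omega,s,q)>0$ (no boundary regularity whatsoever) there is no reason for $v^2/u$ to belong to $\X$: this is precisely the obstruction that Picone-type arguments are designed to sidestep. The paper (Proposition~3.3) proceeds instead by testing the equation for $u$ with $\varphi=v^q/(u+\varepsilon)^{q-1}$, which \emph{is} always in $\X$ because $(u+\varepsilon)^{1-q}$ is bounded and Lipschitz, invokes the pointwise Picone inequality
\[
(u(x)-u(y))\!\left(\frac{v(x)^q}{u_\varepsilon(x)^{q-1}}-\frac{v(y)^q}{u_\varepsilon(y)^{q-1}}\right)\le |v(x)-v(y)|^{q}\,|u(x)-u(y)|^{2-q},
\]
and then lets $\varepsilon\to0$ via Fatou's lemma to obtain $\lambda\le\lambda_1$. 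This avoids any differentiability of the curve and any membership question for $v^2/u$. If you want to salvage your approach, you would effectively need to regularise $u\mapsto u+\varepsilon$ in the denominator and control the error, which brings you back to the paper's argument.
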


We also prove a uniqueness result for $q>2$ smaller than a suitable threshold depending on $\Omega$ (see \cref{3.2}). 

The proof of \cref{teoa} follows standard methods (cf.~\cite{BF14}).
Its conclusion implies
the uniqueness of positive \emph{least energy solutions} of \eqref{LE}, i.e., 
positive solutions of the fractional Lane-Emden equation,
under homogeneous Dirichlet boundary conditions, that minimise the energy functional
\begin{equation}
\label{energy}
	\frac12	\int_{\R^N}\int_{\R^N} \frac{(\varphi(x)-\varphi(y))^2}{\abs{x-y}^{N+2s}}\,dx\,dy
	-\frac1q \int_\Omega{\abs\varphi^q\,dx}
\end{equation}
Thus, for $q\in(1,2)$, 
to every open set $\Omega$ with $\lambda_1(\Omega,s,q)>0$
we can associate the positive least energy solution $w_{\Omega,s,q}$, also called the \emph{fractional Lane-Emden density of $\Omega$} (in fact, the definition can be given for arbitrary open sets in $\R^N$, see Section~\ref{techn2}
for details).

Remarkably, in analogy with the local case (cf.~\cite{BFR}), a negative power of the fractional Lane-Emden density of $\Omega$ appears as a singular weight in a sort of Hardy inequality:
\begin{equation}
\label{LEineq-intro}
	\int_{\Omega} \frac{u^2}{w_{\Omega,s,q}^{2-q}}\,dx\le \int_{\R^{N}}\int_{\R^N} \frac{(u(x)-u(y))^2}{\abs{x-y}^{N+2s}}\,dx\,dy
	\qquad \text{for all $u\in C^\infty_0(\Omega)$}
\end{equation}
We refer to \cref{prop:LEineq} for more details about \eqref{LEineq-intro}. A better known Hardy-type inequality in the fractional setting would involve the distance to the boundary,
instead:
\begin{equation}
\label{hardy-intro}
	\int_{\Omega} \frac{u(x)^2}{
	\dist(x,\partial\Omega)^{2s}}\,dx\le \int_{\R^{N}}\int_{\R^N} \frac{(u(x)-u(y))^2}{\abs{x-y}^{N+2s}}\,dx\,dy
	\qquad \text{for all $u\in C^\infty_0(\Omega)$}
\end{equation}
Inequality \eqref{hardy-intro} always holds, e.g., on bounded Lipschitz sets (see Section~\ref{techn2}).

From inequalities \eqref{LEineq-intro} and \eqref{hardy-intro}, thanks to fractional Hopf's lemma, we can infer the local uniqueness in $L^1(\Omega)$ for positive solutions of fractional Lane-Emden equation \eqref{LE}; this means that the positive least energy solution $w_{\Omega,s,q}$ of \eqref{LE} is isolated in $\mathcal{D}_0^{s,2}(\Omega)$ with respect to the topology of the convergence in $L^1(\Omega)$. We refer to \cref{LEL1uniq} for a more precise statement. By a strategy borrowed from \cite{BDF}, where the result was first proved in the local case, we draw the following consequence.

\begin{thmx}\label{teo-isol}
Let $N\ge1$, $s\in(0,1)$, $q\in(1,2)$ and let $\Omega\subset\R^N$ a bounded open set with $C^{1,1}$ boundary. Then, $\lambda_1(\Omega,s,q)$ is isolated, i.e.,
there exist no sequence of $q$-semilinear $s$-eigenvalues converging to it.
\end{thmx}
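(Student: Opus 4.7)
The plan is to argue by contradiction. Assume $(\lambda_n)\subset\mathfrak S(\Omega,s,q)$ is a sequence of $q$-semilinear $s$-eigenvalues with $\lambda_n\ne\lambda_1(\Omega,s,q)$ and $\lambda_n\to\lambda_1(\Omega,s,q)$, and pick eigenfunctions $u_n$ normalized by $\norm{u_n}_{L^q(\Omega)}=1$. Testing \eqref{2qlambda} against $u_n$ and using the variational characterization \eqref{lambda1} shows that $(u_n)$ is bounded in $\mathcal{D}^{s,2}_0(\Omega)$. Since $\Omega$ is bounded, the compactness of the embedding $\mathcal{D}^{s,2}_0(\Omega)\hookrightarrow L^q(\Omega)$ gives a subsequence along which $u_n\rightharpoonup u_\infty$ weakly in $\mathcal{D}^{s,2}_0(\Omega)$ and strongly in $L^q(\Omega)$. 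Weak lower semi-continuity combined with $\lambda_n\to\lambda_1$ identifies $u_\infty$ as a first eigenfunction with $\norm{u_\infty}_{L^q(\Omega)}=1$, whence \cref{teoa} allows us to assume, up to a sign, that $u_\infty=u_1$, the positive normalized first eigenfunction; the convergence of norms promotes this to strong $\mathcal{D}^{s,2}_0$-convergence. Since $\lambda_n\ne\lambda_1$, the uniqueness part of \cref{teoa} forbids $u_n$ from having constant sign, so $u_n^\pm\not\equiv 0$ for every $n$ large.

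Now rescale: $U_n:=\lambda_n^{-1/(2-q)}u_n$ solves the Lane-Emden equation \eqref{LE}, and $U_n\to w_{\Omega,s,q}=\lambda_1^{-1/(2-q)}u_1$ strongly in $L^q(\Omega)$, hence in $L^1(\Omega)$. The proof boils down to showing $U_n>0$ in $\Omega$ for all $n$ large, which will contradict the sign change above. For this, the $L^1$-convergence must be upgraded to a convergence fine enough to detect the boundary behaviour given by the fractional Hopf lemma. Moser-type iteration on \eqref{LE}, using the uniform bound on $\norm{U_n}_{L^q(\Omega)}$, yields $\norm{U_n}_{L^\infty(\Omega)}\le C$ and hence $\norm{\abs{U_n}^{q-2}U_n}_{L^\infty(\Omega)}\le C'$. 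Since $\Omega$ is of class $C^{1,1}$, boundary regularity for the fractional Laplacian then provides a uniform $C^\alpha(\overline\Omega)$-bound, for some $\alpha\in(0,1)$, on the weighted quotients $U_n/\dist(\cdot,\partial\Omega)^s$. Extracting a further subsequence, $U_n/\dist^s\to w_{\Omega,s,q}/\dist^s$ uniformly on $\overline\Omega$. The fractional Hopf lemma applied to the positive LE-solution $w_{\Omega,s,q}$ on the $C^{1,1}$ domain $\Omega$ gives $w_{\Omega,s,q}/\dist^s\ge c_0>0$ on $\overline\Omega$; combined with the uniform convergence, $U_n/\dist^s\ge c_0/2>0$ on $\overline\Omega$ for $n$ large, so $U_n>0$ in $\Omega$, contradicting the sign change.

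The main obstacle is the regularity upgrade: transferring $L^1$-convergence into uniform convergence of the weighted quotients $U_n/\dist^s$. This is where the $C^{1,1}$-smoothness of $\partial\Omega$ enters decisively, via the boundary regularity theory for $(-\Delta)^s$ on such domains. Alternatively, and more in line with the paper's announced strategy, one can bypass the direct regularity step by invoking the local $L^1$-isolation of $w_{\Omega,s,q}$ among positive LE-solutions (\cref{LEL1uniq}). Here one uses the Lane-Emden Hardy inequality \eqref{LEineq-intro} and the distance-based Hardy inequality \eqref{hardy-intro}, together with the pointwise lower bound $w_{\Omega,s,q}\ge c\,\dist^s$ from the fractional Hopf lemma, to control the negative parts $U_n^-$ quantitatively, thereby reducing the $U_n$ to positive LE-solutions close to $w_{\Omega,s,q}$ in $L^1(\Omega)$, whence \cref{LEL1uniq} forces $U_n=w_{\Omega,s,q}$ for $n$ large, and hence $\lambda_n=\lambda_1$, a contradiction.
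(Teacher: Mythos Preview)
Your first approach, via uniform $C^\alpha(\overline\Omega)$ bounds on $U_n/\dist(\cdot,\partial\Omega)^s$ from Ros--Oton--Serra boundary regularity, is correct and constitutes a genuinely different route from the paper's. It is more elementary in spirit (compactness plus Hopf's lemma) but leans on the deep $C^{1,1}$ boundary regularity theory for $(-\Delta)^s$; the paper instead relies on a softer tool, the compact weighted embedding $\X\hookrightarrow L^2(\Omega,w_{\Omega,s,q}^{q-2})$, which only needs the Hopf \emph{lower} bound $w_{\Omega,s,q}\ge c\,\dist^s$ combined with the distance Hardy inequality (this is precisely \cref{LEL1uniq}).

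Your ``alternative'' paragraph, however, misreads the paper's strategy in two ways. First, \cref{LEL1uniq} is not an isolation statement: it is the interpolation inequality \eqref{holderrr}, whose role is solely to upgrade the continuous embedding of \cref{prop:LEineq} to a compact one (this is \cref{isolationC11}). The actual isolation statement is \cref{abstractprop}. Second, and more importantly, \cref{abstractprop} asserts that $w_{\Omega,s,q}$ is $L^1$-isolated among \emph{all} critical points of the energy \eqref{free-energy}, with no positivity assumption whatsoever; its proof is a linearisation argument that works for sign-changing $u_n$ (the bound \eqref{ptwbdd} on the difference quotient $Q_n$ holds regardless of the sign of $u_n$). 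Consequently, the paper never shows nor needs that $U_n>0$: once $U_n\to w_{\Omega,s,q}$ in $L^1(\Omega)$, \cref{isolationC11} gives the contradiction immediately. Your talk of ``controlling the negative parts $U_n^-$'' and ``reducing the $U_n$ to positive LE-solutions'' is therefore an unnecessary detour based on a misreading of what is being isolated.
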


Little more is known about higher eigenvalues, except that they form a closed set that does not accumulate to  $\lambda_1(\Omega,s,q)$. It is indeed possible to assemble 
an unbounded sequence of $q$-semilinear $s$-eigenvalues by means of
standard critical point theory (see \cref{rmk:higher} below) but
it is not known if that gives a complete description of the 
$q$-semilinear $s$-spectrum, nor is it known if the latter is
a discrete set.

Given $m>1$, simplicity (\cref{teoa}) and isolation (\cref{teo-isol}) of the first $q$- semilinear $s$-eigenvalue with
 $q=1+\frac1m$ have implications on the
long-time behaviour of solutions to 
the initial-boundary value problem for the the \emph{fractional porous media equation} (see \cite{VV})
\begin{equation*}
\begin{cases*}
	\partial_t v + \slap \tonda*{\abs v^{m-1}v}=0 & in $\Omega\times(0,T)$\\
	v=0 & in $\tonda*{\R^N\setminus\Omega}\times(0,T)$\\
	v=v_0 & in $\Omega\times\{0\}$
\end{cases*}
\end{equation*}
We hope to return to this topic in the future, while in this paper we limit our attention to the elliptic problem.

\begin{plan}
In Section~\ref{sec2}, after framing our problem in 
appropriate function spaces we introduce the fractional semilinear eigenvalue problem and the non-local Lane-Emden density. More details on the former are provided in Section~\ref{techn1}, and various properties of
the latter are discussed in Section~\ref{wfrac}.
The preliminary results are used to prove
\eqref{LEineq-intro} in Section~\ref{techn2}, where
\eqref{hardy-intro} is also proved. 
Then, Section~\ref{sec:isolation} is devoted to the isolation of positive solutions of the non-local Lane-Emden equation;
eventually, all the partial results are used in Section~\ref{sec:mainproof} to prove \cref{teoa} and \cref{teo-isol}.
\end{plan}

\begin{ack}
The authors are grateful to Lorenzo Brasco for fruitful discussions on the problem considered in the paper, in particular on the various regularity estimates and on related topics, leading the authors to improve the quality of this paper: he is acknowledged especially for pointing attention to the Hardy-type inequality \eqref{hardy-intro}.

D. Licheri is supported by the grant ``Non-homogeneous eigenvalue problems and applications'' (University of Cagliari, 2022).
\end{ack}

\section{Framework and (pseudo) differential equations}\label{sec2}
Throughout this paper, we fix an integer $N\ge1$, a real number $s\in(0,1)$ and an open set $\Omega\subset\R^N$. The square root of 
\begin{equation}
\label{pre-1}
\int_{\R^N}\int_{\R^N} \frac{(u(x)-u(y))^2}{\abs{x-y}^{N+2s}}\,dx\,dy
\end{equation}
is a norm on the vector space $C^\infty_0(\Omega)$. 
The metric completion of this space is denoted, here and henceforth, by $\mathcal{D}^{s,2}_0(\Omega)$. 

\begin{remark}[Analogies and differences with other spaces]\label{rmk-analog}
Except for the special case $s=\frac12$,
if $\Omega$ is bounded with Lipschitz boundary, then $\mathcal{D}^{s,2}_0(\Omega)$ coincides with the closure $H_0^s(\Omega)$
of $C^\infty_0(\Omega)$ in the Sobolev-Slobodeckij space $H^s(\Omega)$ of all $u\in L^2(\Omega)$ such that
\[
	[u]^2_{H^s(\Omega)}\coloneqq\int_\Omega\int_\Omega  \frac{(u(x)-u(y))^2}{\abs{x-y}^{N+2s}}\,dx\,dy<+\infty
\]
In fact, in that case\footnote{See \cite[Appendix B]{BLP}.}, the ``censored'' Sobolev norm $\norm u_{L^2(\Omega)} + [u]_{H^s(\Omega)}$ is equivalent to 
\[
	\norm u_{L^2(\Omega)}+\tonda*{\int_{\R^N}\int_{\R^N} \frac{(u(x)-u(y))^2}{\abs{x-y}^{N+2s}}\,dx\,dy}^\frac12 
\]
and the latter is equivalent to the norm in $\mathcal{D}_0^{s,2}(\Omega)$, because Lipschitz sets support a Poincaré-type inequality.
On the contrary, if $\partial\Omega$ is not Lipschitz regular, then the existence of functions $u\in H^s(\Omega)$ for which the integral 
\[
	\int_\Omega\int_{\R^N\setminus\Omega}  \frac{u(x)^2}{\abs{x-y}^{N+2s}}\,dx\,dy
\]
diverges cannot be ruled out. If $\Omega$ is bounded and Lipschitz, then
$\X$ coincides with the Hilbert space $X_0^s(\Omega)=\graffa*{u\in H^s\tonda*{\R^N}:\text{$u=0$ a.e.\ in $\R^N\setminus\Omega$}}$ considered in \cite{MRS}.
\end{remark}

For a general open set, it is not true that all the elements of $\X$ are functions; $\X$ is not even a distribution space, in general (see, e.g., \cite{DL,HL}).
A restriction that clears off this difficulty 
is to consider open sets $\Omega$ supporting a Sobolev-type inequality, on which  $\X$ is a function space;
namely, assuming that
the infimum in \eqref{lambda1} is a positive number.

\subsection{Semilinear fractional spectrum}
We denote by $2^\ast_s$ the
fractional Sobolev conjugate exponent, defined by $2N/(N-2s)$ if $2s<N$ and 
$+\infty$ otherwise.

\begin{definition}[Semilinear fractional eigenvalues]\label{defil}
For $q\in(1,2^\ast_s)$, 
we consider the constrained critical points of the double integral \eqref{pre-1} along the submanifold
\begin{equation}
\label{submanif}
	\graffa*{u\in \X:\int_\Omega{\abs u^q\,dx}=1}
\end{equation}
We call \emph{$q$-semilinear $s$-eigenvalues} the corresponding constrained critical values. Their collection is denoted by $\mathfrak{S}(\Omega,s,q)$, and is said to be the \emph{$q$-semilinear $s$-spectrum} of $\Omega$.
\end{definition}

Clearly, \eqref{lambda1} is the largest lower bound
for $\mathfrak{S}(\Omega,s,q)$, and it is its minimum whenever the variational problem \eqref{lambda1} has a solution. The restriction $q<2^\ast_s$ in \cref{defil} is natural because for $q>2^\ast_s$ loss of compactness occur 
regardless of the properties of $\Omega$. If $0<s<N/2$, 
in the borderline case $q=2^\ast_s$
the infimum in \eqref{lambda1} is independent of $\Omega$, and gives the best
constant in Sobolev inequality, that reads as
\begin{equation}
\label{sobolev}
	\mathcal{S}(N,s) 
	\norm v_{L^{2^\ast_s}(\Omega)}^2
	\le
	\int_{\R^{N}}\int_{\R^N} \frac{(v(x)-v(y))^2}{\abs{x-y}^{N+2s}}\,dx\,dy \qquad \text{for all $v\in C^\infty_0\tonda*{\R^N}$}
\end{equation}

By Lagrange's multipliers rule, the $q$-semilinear $s$-eigenvalues are those 
positive real numbers 
$\lambda$ for which
\begin{equation}
\label{eq}
	(-\Delta)^s u =\lambda \norm u_{L^q(\Omega)}^{2-q}\abs u^{q-2}u
\end{equation}
has a non-trivial solution $u\in \mathcal{D}_0^{s,2}(\Omega)$ in the weak sense, viz.
\begin{equation}
\label{eqw}
	\int_{\R^N}\int_{\R^N} \frac{(u(x)-u(y))(\varphi(x)-\varphi(y))}{\abs{x-y}^{N+2s}}\,dx\,dy = \lambda \norm u_{L^q(\Omega)}^{2-q}\int_\Omega{\abs u^{q-2}u\varphi\,dx}
	\end{equation}
for all $\varphi\in \mathcal{D}^{s,2}_0(\Omega)$. 

\subsection{Fractional Lane-Emden equation}
After a renormalisation, the equation \eqref{eq} for Dirichlet $q$-semilinear $s$-eigenfunctions
becomes the \emph{fractional Lane-Emden equation}~\eqref{LE}. Given an open set $\mathscr U\subset\R^N$, we will say
a \emph{weak supersolution} (resp., \emph{subsolution}) of the latter in $\mathscr U$ any function $u\in \mathcal{D}_0^{s,2}(\mathscr U)$ such that
\begin{equation}
\label{LEqw}
	\int_{\R^N}\int_{\R^N} \frac{(u(x)-u(y))(\varphi(x)-\varphi(y))}{\abs{x-y}^{N+2s}}\,dx\,dy \ge
	\int_{\mathscr U}{\abs u^{q-2}u\varphi\,dx}\qquad
	\text{(resp., $\le$)}
\end{equation}
for all non-negative $\varphi\in \mathcal{D}^{s,2}_0(\mathscr U)$.
A function that is both a weak supersolution and a weak subsolution in $\mathscr U$ will be called a \emph{weak solution} in $\mathscr U$.
Clearly, the weak solutions of \eqref{LE} are the critical points on $\X$ of the free energy
\begin{equation}
\label{free-energy}
\frac12\int_{\R^N}\int_{\R^{N}}\frac{(\varphi(x)-\varphi(y))^2}{\abs{x-y}^{N+2s}}\,dx\,dy-\frac{1}{q}\int_\Omega{\abs\varphi^q\,dx}
\end{equation}

\begin{definition}[Fractional Lane-Emden densities]\label{defiw}
Let $q\in(1,2)$ and assume that
$\lambda_1(\Omega,s,q)>0$.
We denote by $w_{\Omega,s,q}$ the unique solution
of the variational problem
\begin{equation}
\label{defw}
	\min_{\varphi\in\X}\graffa*{\frac12\int_{\R^N}\int_{\R^N} \frac{(\varphi(x)-\varphi(y))^2}{\abs{x-y}^{N+2s}}\,dx\,dy - \frac{1}{q}\int_\Omega\varphi^q\,dx :
	\varphi\ge0\ \text{a.e.\ in $\Omega$}}
\end{equation}
and we call it the \emph{$(s,q)$--Lane-Emden density of $\Omega$}.

\begin{remark}\label{welldefi}
By \cite[Theorem 1.3]{F19}, the assumption $\lambda_1(\Omega,s,q)>0$ assures the compactness of the embedding
$\X\hookrightarrow L^q(\Omega)$; then, any minimising sequence
for \eqref{defw} is easily seen to be bounded in $\X$, so it
converges, up to relabelling, weakly in $\X$ and strongly in $L^q(\Omega)$. Also, the constraint $\varphi\ge0$ is convex. Thus, 
solutions of \eqref{defw} exist by direct methods in calculus of variations.
As for their uniqueness, 
minimisers of the even functional \eqref{free-energy} cannot change sign by \cref{lm:sign1}, and thence
constrained minimisers are non-negative minimisers of the free energy \eqref{free-energy}. Then, we conclude by the uniqueness of non-negative weak solutions of
\eqref{LE} (see \cref{uniw} below).
\end{remark}

\end{definition}

\section{The fractional semilinear spectral problem}\label{techn1}
Next proposition provides quantitative $L^\infty$-bounds for $q$-semilinear $s$-eigenfunctions $u$ corresponding to $\lambda\in\mathfrak{S}(\Omega,s,q)$ in terms of the $L^q(\Omega)$-norm of $u$ and of the eigenvalue $\lambda$.
For this standard result, in the proof we limit ourselves to check that Moser-type iterations such as those in appendix to \cite{BFR} can be repeated in this framework, too.

\begin{proposition}\label{linfty}
Let $q\in(1,2^\ast_s)$ and assume the
embedding $\X\hookrightarrow L^q(\Omega)$ to be compact. Let
$\lambda\in\mathfrak{S}(\Omega,s,q)$ and let $u\in \X$ be a corresponding $q$-semilinear $s$-eigenfunction. Then
\begin{subequations}
	\label{bound}
\begin{align}
	\norm u_{L^\infty(\Omega)} &\le
	\mathcal{C}_1(N,s,q) \lambda^{\frac{2^\ast_s}{2\tonda*{2^\ast_s-q}}} \norm u_{L^q(\Omega)} &
	\text{if $2^\ast_s<+\infty$}\\
	\norm u_{L^\infty(\Omega)} &\le
	\mathcal{C}_2(N,s,q,\abs\Omega) \lambda \norm u_{L^q(\Omega)} &
	\text{if $2^\ast_s=+\infty$}
\end{align}
\end{subequations}
\end{proposition}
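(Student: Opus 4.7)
The plan is to perform a Moser-type iteration on the equation \eqref{eqw}. Up to replacing $u$ with $u/\lVert u\rVert_{L^q(\Omega)}$ and absorbing the $\lVert u\rVert_{L^q(\Omega)}^{2-q}$ factor, I may first assume $\lVert u\rVert_{L^q(\Omega)}=1$, so that the equation reads $(-\Delta)^s u=\lambda\,\lvert u\rvert^{q-2}u$ in the weak sense. I would then choose test functions of the form $\varphi_k = u\min\{\lvert u\rvert,M\}^{\beta_k-1}$ (truncating at height $M$ to ensure $\varphi_k\in\mathcal{D}^{s,2}_0(\Omega)$) and let $M\to\infty$ at the end of each step, exactly as in the appendix of \cite{BFR} in the local case.

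The key point in the fractional framework is the pointwise algebraic inequality
\[
(a-b)\bigl(a\lvert a\rvert^{\beta-1}-b\lvert b\rvert^{\beta-1}\bigr)\ge \frac{4\beta}{(\beta+1)^2}\bigl(a\lvert a\rvert^{\frac{\beta-1}{2}}-b\lvert b\rvert^{\frac{\beta-1}{2}}\bigr)^2,\qquad \beta\ge1,
\]
which, applied inside the double integral in \eqref{eqw} with $\varphi=\varphi_k$, yields a lower bound for the Dirichlet form of $u$ tested against $\varphi_k$ in terms of the Gagliardo seminorm of $v_k\coloneqq u\lvert u\rvert^{(\beta_k-1)/2}$. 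The right-hand side is estimated by Hölder's inequality applied to $\lambda\int_\Omega\lvert u\rvert^{q-2+\beta_k}\,dx$. After passing to the limit $M\to+\infty$ by monotone/dominated convergence, I get the basic recursive inequality
\[
[v_k]_{\mathcal{D}^{s,2}}^2\le \frac{(\beta_k+1)^2}{4\beta_k}\,\lambda \int_\Omega \lvert u\rvert^{q-2+\beta_k}\,dx.
\]

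In the subcritical case $2^\ast_s<+\infty$ I then apply the Sobolev inequality \eqref{sobolev} to $v_k$ to turn the left-hand side into $\mathcal S(N,s)\lVert u\rVert_{L^{(\beta_k+1)2^\ast_s/2}}^{\beta_k+1}$. Choosing the sequence $\beta_k$ so that $\beta_{k+1}+q-1=(\beta_k+1)2^\ast_s/2$, I obtain a geometric recursion that can be iterated starting from $\beta_0=1+q-\theta$ for a convenient $\theta$ allowing me to close the iteration at $\lVert u\rVert_{L^q(\Omega)}$; the series $\sum 1/\beta_k$ and $\sum \log\beta_k/\beta_k$ converge since $2^\ast_s/2>1$, and a careful bookkeeping of the $\lambda$ powers produces exactly the exponent $2^\ast_s/(2(2^\ast_s-q))$ in \eqref{bound}a. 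Undoing the initial normalisation reintroduces the factor $\lVert u\rVert_{L^q(\Omega)}$. In the case $2^\ast_s=+\infty$, i.e.\ $N=1$ and $s\ge 1/2$, I would replace the Sobolev inequality by the embedding $\mathcal{D}^{s,2}_0(\Omega)\hookrightarrow L^r(\Omega)$ (valid for every $r\in[1,+\infty)$ with a constant depending on $\lvert\Omega\rvert$) for a single judiciously chosen exponent $r>q$, run the same iteration with $2^\ast_s$ replaced by $r$ and collect the resulting geometric factor; the linear dependence on $\lambda$ in \eqref{bound}b then comes from the fact that only finitely many steps (or a single quantitative Moser step followed by a De Giorgi-type truncation argument) are actually needed, since when $2s>N$ the space $\mathcal{D}^{s,2}_0(\Omega)$ embeds directly into $C^{0,s-N/2}(\R^N)$.

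The main obstacle is not conceptual but bookkeeping: keeping track of the power of $\lambda$ and of the $L^q(\Omega)$-norm of $u$ through the infinite iteration so as to recover the \emph{sharp} exponents stated in \eqref{bound}. As indicated in the statement, however, this is a standard computation once the basic recursion is established, so I would content myself with verifying carefully the algebraic inequality above, the admissibility of the truncated test functions in $\mathcal{D}^{s,2}_0(\Omega)$, and the passage to the limit as $M\to+\infty$ and $k\to+\infty$, referring to \cite{BFR} for the iteration scheme.
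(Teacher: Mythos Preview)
Your proposal is correct and follows essentially the same Moser-iteration route as the paper: truncated power test functions, the same pointwise algebraic inequality (which the paper quotes from \cite{BP}), Sobolev inequality \eqref{sobolev} in the subcritical case, and Morrey's embedding when $2s>N$. Two minor technical points where the paper is more explicit: in the subcritical regime it splits into $q<2$ and $q\ge2$ (using H\"older on $\int u^{\beta+q-1}$ in the latter case) before invoking the iteration scheme of \cite{BF19}; and in the borderline case $N=1$, $s=\tfrac12$ it uses a specific Gagliardo--Nirenberg inequality from \cite{F19} (and, for $q>2$, an interpolation down to some $\sigma<\tfrac12$) rather than a generic $\mathcal D_0^{s,2}\hookrightarrow L^r$ embedding, which gives more transparent control of the constant's dependence on $\lvert\Omega\rvert$.
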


\begin{proof}
With no loss of generality, we may assume that $u>0$.
Fix $\beta>1$ and $M>0$. By \cite[Lemma A.2]{BP} with $p=2$, $a=u(x)$, $b=u(y)$ and\footnote{From now on, we use the following notation:
\begin{align*}
    a\wedge b&\coloneqq\min\graffa{a,b}\\
    a\vee b&\coloneqq\max\graffa{a,b}
\end{align*}
} $g(t)=(t\wedge M)^\beta$, we get
\begin{multline}
\label{elineq}
\frac{2\beta}{\beta+1}\int_{\R^{N}}\int_{\R^N} \frac{\tonda*{(u(x)\wedge M)^{\frac{\beta+1}{2}}-(u(y)\wedge M)^{\frac{\beta+1}{2}}}^2}{\abs{x-y}^{N+2s}}\,dx\,dy\\
	\le \int_{\R^{N}}\int_{\R^N}\frac{(u(x)-u(y))\tonda*{(u(x)\wedge M)^\beta-(u(y)\wedge M)^\beta}}{\abs{x-y}^{N+2s}}\,dx\,dy
\end{multline}
The choice $\varphi=(u\wedge M)^\beta$ in \eqref{eqw} implies that the right integral in \eqref{elineq} does not exceed
\[
\lambda\norm u_{L^q(\Omega)}^{2-q} \int_\Omega u^{q-1}(u\wedge M)^\beta\,dx
\]

\paragraph{\bf Case $N>2s$}
By the compactness of the embedding 
$\X\hookrightarrow L^q(\Omega)$ and by a density argument,
Sobolev inequality \eqref{sobolev} holds with $v=(u\wedge M)^{\frac{\beta+1}{2}}$. Thus, the left hand side in \eqref{elineq} is at least
\[
\mathcal{S}(N,s) \frac{2\beta}{\beta+1} \tonda*{\int_\Omega (u\mathbin{\wedge}M)^{\frac{\beta+1}{2}2^\ast_s}\,dx}^\frac{2}{2^\ast_s}
\]
As $M>0$ was arbitrary, by the material above we deduce that
\begin{equation}\label{prebound}
\mathcal{S}(N,s) \tonda*{\int_\Omega u^{\frac{\beta+1}{2}2^\ast_s}\,dx}^\frac{2}{2^\ast_s}\le\lambda\norm u_{L^q(\Omega)}^{2-q}  \frac{\beta+1}{2\beta} \int_\Omega u^{\beta+q-1}\,dx
\end{equation}

If $1<q<2$, by arguing as in the second part of the proof of \cite[Proposition 2.5]{BF19} we see that  \eqref{prebound} implies \eqref{bound}. 
If instead $2\le q<2^\ast_s$, then, by H\"older's inequality, we have
\[
 \int_\Omega u^{\beta+q-1}\,dx \le \norm u_{L^q(\Omega)}^{q-2} \tonda*{\int_\Omega u^{\frac{\beta+1}{2}q}\,dx}^\frac{2}{q}
\]
whence it follows that
\[
	\mathcal{S}(N,s) \tonda*{\int_\Omega u^{\frac{\beta+1}{2}2^\ast_s}\,dx}^\frac{2}{2^\ast_s}\le\lambda  \frac{\beta+1}{2\beta}
	\tonda*{\int_\Omega u^{\frac{\beta+1}{2}q}\,dx}^\frac{2}{q}
\]
which leads one to \eqref{bound} again, thanks to the iteration scheme in first part of the proof of~\cite[Proposition 2.5]{BF19}.
\smallskip

\paragraph{\bf Case $N=1$ and $\frac12<s<1$}
In this case, the conclusion is an immediate consequence of fractional Morrey's
embedding (see \cite[Corollary 2.7]{BGCV}).
\smallskip

\paragraph{\bf Case $N=1$ and $s=\frac12$} 
The obvious fact
in this borderline case is that solutions have bounded mean oscillation. To prove they are also bounded, we first focus on exponents $q\in(1,2]$. By the second statement in \cite[Lemma 2.3]{F19}
with $p=2$, $N=1$ and $r=2q$, 
\begin{equation}
\label{GNS12}
C_1 \tonda*{\int_\Omega \varphi^{2q}\,dx}^\frac{2}{q}
\le \tonda*{\int_\Omega \varphi^q\,dx}^\frac{2}{q}
 \int_\R\int_\R
	\frac{(\varphi(x)-\varphi(y))^2}{\abs{x-y}^{2}}\,dx\,dy
\end{equation}
holds, in particular, with $\varphi=(u\wedge M)^{\frac{\beta+1}{2}}$, for all $M>0$. The constant $C_1>0$ depends only on $q$ and $s$. Then,
by \eqref{elineq}, arguing as done in the previous case we get
\[
C_2(s,q) \tonda*{\int_\Omega u^{\frac{\beta+1}{2} 2q}\,dx}^\frac{2}{2q}\le\lambda \norm u_{L^q(\Omega)}^{2-q} \frac{\beta+1}{2\beta} \int_\Omega u^{\beta+q-1}\,dx
\]
Hence, we arrive at the desired conclusion by arguing as done
after equation (13) in~\cite{BF19}, with minor changes (just replace $2^\ast$  by $2q$).

In order to deal with the exponents $q>2$, we take $\sigma\in\tonda*{\frac14,\frac12}$
with $\frac12-\sigma$ so small that
the Sobolev conjugate $2^\ast_\sigma=2/(1-2\sigma)$ exceeds $2q$ and we observe that, for all $\varphi\in \mathcal{D}_0^{s,2}(\Omega)$,
\[
	C_3\int_\R\int_\R
	\frac{(\varphi(x)-\varphi(y))^2}{|x-y|^{1+2\sigma}}\,dx\,dy
	\le
	\tonda*{\int_\Omega\varphi^2\,dx}^{2(1-2\sigma)}
	\tonda*{\int_\R\int_\R
	\frac{(\varphi(x)-\varphi(y))^2}{\abs{x-y}^{2}}\,dx\,dy}^{4\sigma}
\]
where $C_3$ is an absolute constant. This follows by a homogeneity argument based on the obvious remark that
\[
\iint_{\abs{y-x}<1} \frac{(\varphi(x)-\varphi(y))^2}{\abs{x-y}^{1+2\sigma}}\,dx\,dy
\le \iint_{\abs{y-x}<1}
\frac{(\varphi(x)-\varphi(y))^2}{\abs{x-y}^{2}}\,dx\,dy
\]
and 
\[
\iint_{\abs{y-x}\ge1}  \frac{(\varphi(x)-\varphi(y))^2}{\abs{x-y}^{1+2\sigma}}\,dx\,dy
\le 2 \int_\Omega\varphi(x)^2\int_{\abs{y-x}\ge1} \frac{dy}{\abs{x-y}^{1+2\sigma}}\,dx
\le \frac{2}{\sigma} \int_\Omega\varphi^2\,dx
\]
Recalling that $2<2q<2^\ast_\sigma$, by interpolation we also have
\[
	\tonda*{\int_\Omega \varphi^{2q}\,dx}^\frac{1}{2q}
	\le
	\tonda*{\int_\Omega \varphi^2\,dx}^\frac{\theta}{2}
	\tonda*{\int_\Omega \varphi^{2^\ast_\sigma}\,dx}^\frac{1-\theta}{2^\ast_\sigma}
\]
where $\theta\in(0,1)$. Then,
by Sobolev inequality \eqref{sobolev} with $\sigma$ instead of $s$
and by H\"older's inequality, we have again \eqref{GNS12}, 
but with a constant different from $C_1$, depending only on $\Omega$, $s$ and $q$.

In conclusion, we can take $\varphi=(u\wedge M)^{\frac{\beta+1}{2}}$
and argue as done for the exponents in the range $(1,2]$ to get the desired estimate also in the case $q>2$.
\end{proof}

The following elementary proposition contains a general property of the first
semilinear fractional eigenvalue. 

\begin{proposition}\label{prop:exist1}
Let $q\in(1,2^\ast_s)$ and assume the embedding $\X\hookrightarrow L^q(\Omega)$ to be compact.
Then, the infimum in \eqref{lambda1} is a minimum. Moreover, any minimiser
is either a strictly positive or a strictly negative function.
\end{proposition}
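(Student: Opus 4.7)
The plan is to apply the direct method in the calculus of variations, followed by a strong minimum principle for $\slap$ to upgrade a.e.\ nonneg\-ativity to strict positivity.

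For existence, I would take a minimising sequence $\{u_n\}\subset\X$ with $\norm{u_n}_{L^q(\Omega)}=1$ and Gagliardo seminorms squared tending to $\lambda_1(\Omega,s,q)$; by density of $C_0^\infty(\Omega)$ in $\X$, passing from the definition \eqref{lambda1} to this enlarged admissible set does not affect the infimum. Boundedness in the Hilbert space $\X$ yields a weak subsequential limit $u\in\X$; the compact embedding $\X\hookrightarrow L^q(\Omega)$ upgrades the convergence to strong in $L^q(\Omega)$, so the constraint passes to the limit and $\norm{u}_{L^q(\Omega)}=1$. Weak lower semicontinuity of the Hilbert norm on $\X$ then shows that $u$ attains the infimum.

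To show that any minimiser has constant sign, I would observe that $|u|$ is admissible and, by the pointwise inequality $(|a|-|b|)^2\le(a-b)^2$, its Gagliardo seminorm is bounded above by that of $u$. Hence $|u|$ is also a minimiser, and the two seminorms coincide. This forces equality in the pointwise inequality for a.e.\ $(x,y)\in\R^N\times\R^N$, which requires $u(x)u(y)\ge0$ for a.e.\ pair. The latter condition rules out the simultaneous presence of sets $\{u>0\}$ and $\{u<0\}$ of positive Lebesgue measure, for their Cartesian product would violate it. Thus, up to a global sign change, $u\ge0$ a.e.\ in $\Omega$.

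To upgrade to strict positivity, I would invoke the Euler-Lagrange equation \eqref{eq}: the nonneg minimiser $u$ solves $\slap u=\lambda_1(\Omega,s,q)u^{q-1}$ weakly in $\Omega$, and $u\equiv0$ on $\R^N\setminus\Omega$. In particular $u$ is a nonneg weak supersolution of $\slap u=0$ on $\Omega$, and a strong minimum principle for $\slap$ yields that $u$ is either identically zero or strictly positive in $\Omega$; the $L^q$-normalisation excludes the former, and the same argument applied to $-u$ handles the other sign. The delicate point is justifying the strong minimum principle on a general $\Omega$ without boundary regularity, but this is an interior statement: the nonlocal defining formula \eqref{2} shows that a nonneg function $u$ not vanishing identically in $\R^N$ cannot satisfy $\slap u(x_0)\ge0$ at an interior zero $x_0\in\Omega$, since the integral in \eqref{2} would then be strictly negative.
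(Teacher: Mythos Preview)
Your proposal is correct and follows essentially the same route as the paper: direct methods for existence, the pointwise inequality $(\abs a-\abs b)^2\le(a-b)^2$ (the paper's \cref{lm:sign1}) for constant sign, and the strong minimum principle for strict positivity. The only caveat is that your closing heuristic via the pointwise formula \eqref{2} is not rigorous for a merely weak supersolution $u\in\X$; the paper handles this via the weak formulation in \cref{nonlocmaxprinc}, whose proof relies on a logarithmic-type estimate from \cite{BF14} together with a nonlocal argument ruling out vanishing on any connected component.
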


\begin{proof}
The existence of a minimiser is an immediate consequence of the direct methods in the calculus of variations. 
The fact that it must have constant sign follows by \cref{lm:sign1}. 
Then, the last statement follows by the strong minimum principle of \cref{nonlocmaxprinc}.
\end{proof}

Besides the first eigenvalue \eqref{lambda1}, higher eigenvalues also exist.
In fact, it is straightforward to check that the squared norm \eqref{1} in $\X$ satisfies
the Palais-Smale condition. Hence, in view of \cite[Theorem 5.7]{S}, 
$\mathfrak{S}(\Omega,q,s)$ is an infinite set.
More precisely, for all $n\in\mathbb N$
we denote by $\mathfrak{T}_n(\Omega,s,q)$ the collection of all subsets $A$ of 
\begin{equation}
\label{submanif4}
\graffa*{u\in\X:\int_\Omega{\abs u^q\,dx}=1}
\end{equation}
that are symmetric and compact in $\X$ and satisfy the following property; for every $k<n$, there exist no
odd and continuous mapping from $A$ to $\R^k\setminus\{0\}$. We can rephrase last property saying that the \emph{Krasnoselskii's genus} of $A$ is larger than or equal to $n$. Then, setting
\begin{equation}
\label{lambdan}
\lambda_n(\Omega,s,q) = \inf_{A \in\mathfrak{T}_n(\Omega,s,q)} \max_{u\in A}\int_{\R^N}\int_{\R^N} \frac{(u(x)-u(y))^2}{\abs{x-y}^{N+2s}}\,dx\,dy
\end{equation}
one defines an unbounded non-decreasing sequence of $q$-semilinear $s$-eigenvalues.

\begin{remark}\label{rmk:higher}
In general, $ \mathfrak{S}(\Omega,s,q)$ is closed. Indeed, if
a sequence $(\lambda_j)_{j\in\N}\subset   \mathfrak{S}(\Omega,s,q)$ converges to a positive number $\lambda$, there is a corresponding sequence
of $q$-semilinear $s$-eigenfunctions (obtained by renormalisation in $L^q(\Omega)$) which has constant $L^q(\Omega)$-norm and converging
norm in $\X$. By uniform convexity, some subsequence is converging strongly to a limit $u$ in $L^q(\Omega)$,
and this implies that $u$ is a $q$-semilinear $s$-eigenfunction corresponding to $\lambda$.
\end{remark}

\subsection{The sub-homogeneous case}
We recall two properties of $\lambda_1(\Omega,s,q)$ for $q\le2$.

\begin{proposition}\label{prop:sign2}
Let $q\in(1,2]$ and assume
that $\lambda_1(\Omega,s,q)>0$.
If $\lambda\in\mathfrak{S}(\Omega,s,q)$ and
$u$ is a corresponding eigenfunction, then $u\ge0$ a.e.\ in $\Omega$ implies $\lambda = \lambda_1(\Omega,s,q)$. 
\end{proposition}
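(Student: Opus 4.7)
The plan is to split the statement according to whether $q\in(1,2)$ or $q=2$. In the genuinely sub-homogeneous range $q\in(1,2)$, I would reduce the statement to the uniqueness of non-negative weak solutions of the fractional Lane-Emden equation \eqref{LE} (provided by \cref{uniw}); the linear borderline case $q=2$ is classical and follows from a pairing argument.

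Assume first $q\in(1,2)$. Since $u\ge 0$ is non-trivial, the weak equation \eqref{eqw} specialises to $(-\Delta)^s u=\lambda\|u\|_{L^q(\Omega)}^{2-q}u^{q-1}$, and \cref{nonlocmaxprinc} upgrades this to $u>0$ a.e.\ in $\Omega$. A direct scaling computation shows that the rescaled function $v\coloneqq\alpha u$, with
\[
    \alpha\coloneqq\lambda^{-1/(2-q)}\|u\|_{L^q(\Omega)}^{-1},
\]
is a non-negative weak solution in $\X$ of the Lane-Emden equation \eqref{LE} (the scaling exponent is dictated by requiring $\alpha^{2-q}\lambda\|u\|_{L^q(\Omega)}^{2-q}=1$). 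By \cref{uniw} one has $v=w_{\Omega,s,q}$, and taking $L^q(\Omega)$-norms in $\alpha u=w_{\Omega,s,q}$ produces $\lambda^{-1/(2-q)}=\|w_{\Omega,s,q}\|_{L^q(\Omega)}$, i.e.\ $\lambda=\|w_{\Omega,s,q}\|_{L^q(\Omega)}^{q-2}$. By \cref{prop:exist1} the first eigenvalue $\lambda_1(\Omega,s,q)$ also admits a strictly positive eigenfunction, to which I would apply verbatim the same computation, obtaining $\lambda_1(\Omega,s,q)=\|w_{\Omega,s,q}\|_{L^q(\Omega)}^{q-2}$. Comparing the two identities forces $\lambda=\lambda_1(\Omega,s,q)$.

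For $q=2$ the equation $(-\Delta)^s u=\lambda u$ is linear, and I would conclude by a classical pairing argument: picking a strictly positive first eigenfunction $u_1$ (\cref{prop:exist1}), testing the equation for $u$ against $u_1$ and the equation for $u_1$ against $u$, and exploiting the symmetry of the Gagliardo bilinear form on the left of \eqref{eqw}, one obtains
\[
    \lambda\int_\Omega u\,u_1\,dx=\lambda_1(\Omega,s,q)\int_\Omega u\,u_1\,dx.
\]
Since $u_1>0$ and $u\ge 0$ is non-trivial, the common integral factor is strictly positive and the conclusion $\lambda=\lambda_1(\Omega,s,q)$ follows.

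The only delicate point is to check that the hypotheses of \cref{uniw} are genuinely met by the rescaled function $v$, but this reduces to verifying that $v\in\X$ and that \eqref{LEqw} is preserved under the multiplicative rescaling $v=\alpha u$ prescribed by the homogeneity of the equation; both checks are routine since $\X$ is a linear space. Accordingly, I would not expect any genuine obstacle: once \cref{uniw} and \cref{nonlocmaxprinc} are available, the proof is essentially bookkeeping.
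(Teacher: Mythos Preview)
Your proposal for $q\in(1,2)$ is circular within the paper's logical structure: \cref{uniw} --- the uniqueness of non-negative weak solutions of the fractional Lane-Emden equation --- is itself deduced \emph{from} \cref{prop:sign2} (together with \cref{prop:simpl}). Indeed, the argument in \cref{uniw} reads: any non-negative weak solution of \eqref{LE} is, by \cref{prop:sign2}, a first eigenfunction with prescribed $L^q$-norm, whence uniqueness by simplicity. So invoking \cref{uniw} to establish \cref{prop:sign2} begs the question. Unless you supply an independent proof of the Lane-Emden uniqueness (e.g.\ a Brezis--Oswald type argument for the fractional Laplacian), the reduction does not go through.

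The paper's own proof is self-contained and treats all $q\in(1,2]$ uniformly via a Picone-type argument: one picks a positive first eigenfunction $v$ (\cref{prop:exist1}), tests the weak equation for $u$ with $\varphi=v^q/(u+\varepsilon)^{q-1}$, uses the pointwise inequality of \cite[Proposition~4.2]{BF14} to bound the resulting Gagliardo double integral, applies H\"older's inequality with exponents $2/q$ and $2/(2-q)$, and sends $\varepsilon\to0$ via Fatou to obtain $\lambda\le\lambda_1(\Omega,s,q)$.

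Your pairing argument for $q=2$ is correct and is a legitimate shortcut in the linear case; it simply does not extend to $q<2$ because the right-hand side of \eqref{eqw} is no longer symmetric bilinear in $(u,\varphi)$.
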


\begin{proof}
By assumption, the embedding $\X\hookrightarrow L^q(\Omega)$ is continuous. Then, since $q\in(1,2]$, 
by Gagliardo-Nirenberg interpolation inequality (see \cite[Lemma 2.3]{F19}) it is also compact.
Thus, the assumptions of \cref{prop:exist1} are valid.

Let $v\in\X$ be a first eigenfunction, and assume that $v>0$ a.e.\ in $\Omega$.
Then, let $\lambda \in\mathfrak{S}(\Omega,s,q)$, let $u$ be a  corresponding eigenfunction, and assume that $u\ge0$ a.e.\ in $\Omega$, as well. This implies
$u>0$ a.e.\ in $\Omega$ by the strong minimum principle (\cref{nonlocmaxprinc}). Being free to multiply by constants,
we shall also assume both $u$ and $v$ to have unit norm in $L^q(\Omega)$.

Fix $\varepsilon>0$ and write $u_\varepsilon = u+\varepsilon$. 
For every $x,y\in\R^N$, by~\cite[Proposition 4.2]{BF14} with $p=2$, we have
\[
(u(x)-u(y))\tonda*{\frac{v(x)^q}{u_\varepsilon(x)^{q-1}}-\frac{v(y)^q}{u_\varepsilon(y)^{q-1}}}\le
\abs{v(x)-v(y)}^q\abs{u(x)-u(y)}^{2-q}
\]
Multiplying by the kernel $\abs{x-y}^{N+2s}=\abs{x-y}^{N\frac{q}{2}+sq+N\tonda*{1-\frac{q}{2}}+s(2-q)}$ and integrating yields
\begin{multline*}
	\int_{\R^{N}}\int_{\R^N}\frac{u(x)-u(y)}{\abs{x-y}^{N+2s}} \tonda*{ \frac{v(x)^q}{u_\varepsilon(x)^{q-1}}-\frac{v(y)^q}{u_\varepsilon(y)^{q-1}}}\,dx\,dy\\
	\le \int_{\R^{N}}\int_{\R^N}\frac{\abs{v(x)-v(y)}^{q}\abs{u(x)-u(y)}^{2-q}}{\abs{x-y}^{(N+2s)\frac q2}\abs{x-y}^{(N+2s)\frac{2-q}2}}\,dx\,dy
\end{multline*}
By H\"older's inequality with exponents $\frac{2}{q}$ and $\frac{2}{2-q}$, the right hand side is bounded by
\[
\lambda_1(\Omega,s,q)^{\frac{q}{2}}\lambda^\frac{2-q}{2}
\]
because of the equations satisfied by $u$ and $v$ and of their normalisation in $L^q(\Omega)$.
Since $\varphi =v^q/u_\varepsilon^{q-1}$ is an admissible test function in \eqref{eqw}, we have
\[
	\int_{\R^{N}}\int_{\R^N}\frac{u(x)-u(y)}{\abs{x-y}^{N+2s}}\tonda*{ \frac{v(x)^q}{u_\varepsilon(x)^{q-1}}-\frac{v(y)^q}{u_\varepsilon(y)^{q-1}}}\,dx\,dy 
	=\lambda \int_\Omega u(x)^{q-1} \frac{v(x)^{q}}{(u(x)+\varepsilon)^{q-1}}\,dx
\]
Therefore, for every $\varepsilon>0$ we end up with inequality
\begin{equation}
\label{prefatou}
\lambda \int_\Omega u(x)^{q-1} \frac{v(x)^{q}}{(u(x)+\varepsilon)^{q-1}}\,dx\le
\lambda_1(\Omega,s,q)^{\frac{q}{2}}\lambda^\frac{2-q}{2}
\end{equation}
Since $u>0$ a.e.\ in $\Omega$, applying Fatou's lemma and dividing $\lambda$ out we arrive at
\[
1=\int_\Omega v(x)^q\,dx \le  \tonda*{\frac{\lambda_1(\Omega,s,q)}{\lambda}}^{\frac{q}{2}}
\]
which gives $\lambda\le \lambda_1(\Omega,s,q)$. The definition of $\lambda_1(\Omega,s,q)$ gives the opposite inequality.
\end{proof}

\begin{proposition}\label{prop:simpl}
Let $q\in(1,2]$ and assume that $\lambda_1(\Omega,s,q)>0$.
Then,
$\lambda_1(\Omega,s,q)$ is simple, i.e., all the corresponding eigenfunctions are mutually proportional.
\end{proposition}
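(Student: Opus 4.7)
The strategy is to re-run the Picone-type comparison of the proof of \cref{prop:sign2} with $\lambda$ equal to $\lambda_1(\Omega,s,q)$ itself, and to read off proportionality from the equality case that is forced upon us.

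Let $u$ and $v$ be two first eigenfunctions. By \cref{prop:exist1} both have constant sign; after multiplying each by $\pm 1$ I may assume $u, v > 0$ a.e.\ in $\Omega$ (via the strong minimum principle \cref{nonlocmaxprinc}) and, after rescaling, that $\norm{u}_{L^q(\Omega)} = \norm{v}_{L^q(\Omega)} = 1$. Verbatim repetition of the chain of inequalities in the proof of \cref{prop:sign2}, with both $u$ and $v$ being first eigenfunctions, yields for every $\varepsilon > 0$
\[
\lambda_1(\Omega,s,q) \int_\Omega u(x)^{q-1}\frac{v(x)^q}{(u(x)+\varepsilon)^{q-1}}\,dx \le \int_{\R^N}\!\int_{\R^N}\frac{\abs{v(x)-v(y)}^q \abs{u(x)-u(y)}^{2-q}}{\abs{x-y}^{N+2s}}\,dx\,dy \le \lambda_1(\Omega,s,q),
\]
where the last step is Hölder's inequality with exponents $2/q$ and $2/(2-q)$ combined with the $L^q$-normalisation. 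Since $u > 0$ a.e.\ in $\Omega$ and $u^{q-1}/(u+\varepsilon)^{q-1} \le 1$, dominated convergence sends the leftmost integral to $\lambda_1(\Omega,s,q)\int_\Omega v^q\,dx = \lambda_1(\Omega,s,q)$ as $\varepsilon \to 0^+$. Hence the displayed chain becomes a chain of equalities in the limit; in particular, since the middle integral does not depend on $\varepsilon$, Hölder's inequality is tight.

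The equality case of Hölder then provides a constant $C > 0$ such that
\[
(v(x)-v(y))^2 = C\,(u(x)-u(y))^2 \qquad \text{for a.e.\ } (x,y) \in \R^N \times \R^N.
\]
Choosing $y$ in the positive-measure set $\R^N \setminus \Omega$ (where both $u$ and $v$ vanish) gives $v(x)^2 = C\,u(x)^2$ a.e.\ in $\Omega$, so $v = \sqrt{C}\,u$ by positivity, and the common $L^q$-normalisation forces $C = 1$. This yields $u = v$ a.e.\ in $\Omega$ and establishes simplicity.

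The only delicate point is the limit $\varepsilon \to 0^+$ in the leftmost integral, which is handled cleanly by dominated convergence thanks to the pointwise positivity of $u$ coming from \cref{nonlocmaxprinc}. Passing to the limit inside the pointwise Picone identity from \cite[Proposition 4.2]{BF14}, which would require a more delicate argument, is bypassed altogether by extracting the proportionality directly from the $\varepsilon$-independent equality case of Hölder's inequality.
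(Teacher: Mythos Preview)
Your argument is correct for $q\in(1,2)$ and takes a genuinely different route from the paper's proof. The paper proceeds by \emph{hidden convexity}: for $t\in[0,1]$ it forms $\sigma_t=\bigl(t\,u^q+(1-t)\,v^q\bigr)^{1/q}$, shows via the $\ell^q$-triangle inequality and convexity of $\tau\mapsto|\tau|^{2/q}$ that $\sigma_t$ is again a first eigenfunction, and then reads off proportionality from the equality case of the triangle inequality. Your approach instead recycles the Picone comparison from \cref{prop:sign2} and extracts proportionality from the equality case of H\"older. This is economical (no new machinery) and has the pleasant feature that the step ``pick $y$ in $\R^N\setminus\Omega$, where both eigenfunctions vanish'' short-circuits any delicate analysis of when the pointwise Picone identity becomes an equality. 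The paper's approach, on the other hand, is a single self-contained convexity argument that does not appeal back to \cref{prop:sign2} and works uniformly up to the endpoint.

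That endpoint is the one place your proof needs a patch. At $q=2$ the H\"older exponents $2/q$ and $2/(2-q)$ degenerate: the factor $|u(x)-u(y)|^{2-q}$ disappears, the middle integral in your display is simply the Gagliardo seminorm of $v$, and the right-hand inequality is a tautology rather than a genuine H\"older step. Hence there is no equality case from which to deduce $(v(x)-v(y))^2=C\,(u(x)-u(y))^2$. For $q=2$ you must argue differently---either invoke the standard simplicity of the first eigenvalue for the linear fractional Laplacian, or analyse the equality case of the Picone inequality itself (which is where the information actually sits when $q=2$). A second, minor point: you use that $\R^N\setminus\Omega$ has positive measure without comment; this does follow from $\lambda_1(\Omega,s,q)>0$ and $q<2^\ast_s$ via a scaling argument, but a word of justification would be appropriate.
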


\begin{proof}
Let $u$ and $v$ be first eigenfunctions. With no loss of generality, assume
that both $u$ and $v$ are non-negative functions. 
We may also assume both $u$ and $v$ to have unit norm in $L^q(\Omega)$.
For all $t\in[0,1]$, consider the function $\xi_t\colon\Omega\to\R^2$ defined by $\xi_t(x)=\tonda*{t^{1/q}u(x),(1-t)^{1/q}v(x)}$.
Let $\|\cdot\|_{\ell^q}$ denote the $\ell^q$-norm in $\R^2$. Then, the convexity of  $\tau\mapsto\abs\tau^{2/q}$ implies
\begin{subequations}\label{triangle12}
\begin{equation}
\label{triangle1}
	\norm{\xi_t(x)-\xi_t(y)}_{\ell^q}^2 \le t (u(x)-u(y))^2+(1-t)(v(x)-v(y))^2 \qquad \text{for all $x,y\in\Omega$}
\end{equation}
Also, for every $t\in[0,1]$, set $\sigma_t(x) = \norm{\xi_t(x)}_{\ell^q}$ for $x\in\Omega$ and $\sigma_t(x)=0$ for $x\in\R^N\setminus\Omega$.
Then
\begin{equation}
\label{triangle2}
(\sigma_t(x)-\sigma_t(y))^2= 
(\norm{\xi_t(x)}_{\ell^q}-\norm{\xi_t(y)}_{\ell^q})^2
	\qquad \text{for all $x,y\in\Omega$}
\end{equation}
\end{subequations}
Hence, by triangle inequality, $\sigma_t\in\X$ 
with the estimate
\[
\iint_{\R^{2N}}\frac{(\sigma_t(x)-\sigma_t(y))^2}{\abs{x-y}^{N+2s}}dx\,dy\le t\iint_{\R^{2N}}\frac{(u(x)-u(y))^2}{\abs{x-y}^{N+2s}}dx\,dy+(1-t)\iint_{\R^{2N}}\frac{
(v(x)-v(y))^2}{\abs{x-y}^{N+2s}}dx\,dy
\]
The normalisation in $L^q(\Omega)$ of $u$ and of $v$ implies that the right hand side in the latter equals $\lambda_1(\Omega,s,q)$. On the other hand,
the left hand side is larger than or equal to $\lambda_1(\Omega,s,q)$, because
 \[
 \int_\Omega\sigma_t(x)^q\,dx= \int_\Omega {\norm{\xi_t(x)}_{\ell^q}^q\,dx}=t\int_\Omega u(x)^q\,dx+(1-t)\int_\Omega v(x)^q\,dx=1
 \]
thus, $\sigma_t$ is admissible for the minimisation problem that defines $\lambda_1(\Omega,s,q)$. Therefore, for every $t\in[0,1]$, the
previous integral inequality is an equality. As a consequence, the pointwise identity 
\[
(\sigma_t(x)-\sigma_t(y))^2=t (u(x)-u(y))^2+(1-t)(v(x)-v(y))^2
\]
holds for all $t\in[0,1]$ and for a.e.\ $x,y\in\Omega$. In view of \eqref{triangle12}, the latter yields the equality case in triangle inequality
\[
\abs{\norm{\xi_t(x)}_{\ell^q}-\norm{\xi_t(y)}_{\ell^q}} \le \norm{\xi_t(x)-\xi_t(y)}_{\ell^q}
\]
which occurs if and only if there exists $\alpha(x,y)\in\R$ with $\xi_t(x)=\alpha(x,y)\xi_t(y)$. Owing to the definition of $\xi_t$, it
follows that $u(x)=\alpha(x,y)u(y)$ and $v(x)=\alpha(x,y)v(y)$. In conclusion, for a.e.\ $x,y$, we have
\[
	\frac{u(x)}{v(x)}=\frac{u(y)}{v(y)}
\]
and this concludes the proof.
\end{proof}

\subsection{The super-homogeneous case}
Following the proof of \cite[Proposition 4.3]{BF19} about an analogous property in the local case,
we show that the first eigenvalue on  $\Omega$ is simple
also in the super-homogeneous case $q>2$, for all $q$ up to a suitable threshold (depending on $\Omega$). For this purpose, we first discuss
the continuous dependence of $\lambda_1(\Omega,s,q)$ on $q$ with a method used in \cite[Lemma 4]{anello} to derive monotonicity of semilinear eigenvalues with respect to $q$ in the local case; here we limit our attention to the right continuity at $q=2$, which can be proved also by different methods (see \cite[Lemma 2.1]{B}).

\begin{lemma}\label{lambdacont}
We have
\[
	\lim_{q\to 2^+}\lambda_1(\Omega,s,q)=\lambda_1(\Omega,s,2)
\]
\end{lemma}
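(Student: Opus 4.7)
The plan is to prove the limit by establishing matching upper and lower bounds for $\lambda_1(\Omega,s,q)$ as $q\to 2^+$.

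The upper bound $\limsup_{q\to 2^+}\lambda_1(\Omega,s,q)\le\lambda_1(\Omega,s,2)$ is immediate: fix $\varepsilon>0$ and pick a competitor $\varphi\in C^\infty_0(\Omega)$ in \eqref{lambda1} for the exponent $2$ with $\norm{\varphi}_{L^2(\Omega)}=1$ and
\[
\int_{\R^N}\int_{\R^N}\frac{(\varphi(x)-\varphi(y))^2}{\abs{x-y}^{N+2s}}\,dx\,dy\le\lambda_1(\Omega,s,2)+\varepsilon.
\]
Since $\varphi$ is bounded and compactly supported, dominated convergence gives $\norm{\varphi}_{L^q(\Omega)}\to\norm{\varphi}_{L^2(\Omega)}=1$ as $q\to 2^+$, so after renormalisation $\varphi$ is admissible in \eqref{lambda1} for every $q>1$ and yields $\limsup_{q\to 2^+}\lambda_1(\Omega,s,q)\le\lambda_1(\Omega,s,2)+\varepsilon$; sending $\varepsilon\to 0^+$ closes this half.

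For the lower bound, the case $\lambda_1(\Omega,s,2)=0$ is trivial, so I would assume $\lambda_1(\Omega,s,2)>0$, which in particular makes the embedding $\X\hookrightarrow L^2(\Omega)$ continuous. Given $q_n\downarrow 2$, I pick near-minimisers $\varphi_n\in C^\infty_0(\Omega)$ with $\norm{\varphi_n}_{L^{q_n}(\Omega)}=1$ and squared Gagliardo seminorm at most $\lambda_1(\Omega,s,q_n)+\tfrac1n$; by the upper bound, this quantity remains uniformly bounded in $n$. The crux is to show $\liminf_n\norm{\varphi_n}_{L^2(\Omega)}\ge 1$, which I would extract from the H\"older interpolation
\[
1=\norm{\varphi_n}_{L^{q_n}(\Omega)}\le\norm{\varphi_n}_{L^2(\Omega)}^{\alpha_n}\norm{\varphi_n}_{L^r(\Omega)}^{1-\alpha_n},
\]
where $r>2$ is an exponent supporting a continuous embedding $\X\hookrightarrow L^r(\Omega)$ and $\alpha_n$ is determined by $1/q_n=\alpha_n/2+(1-\alpha_n)/r$, so that $\alpha_n\to 1$ as $q_n\to 2^+$. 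When $2^\ast_s<+\infty$ one may take $r=2^\ast_s$ and invoke the Sobolev inequality \eqref{sobolev} to bound $\norm{\varphi_n}_{L^r(\Omega)}$; when $2^\ast_s=+\infty$, one instead appeals to sub-critical embeddings as in the proof of \cref{linfty}. In either case $\norm{\varphi_n}_{L^r(\Omega)}^{1-\alpha_n}\to 1$, whence $\liminf_n\norm{\varphi_n}_{L^2(\Omega)}\ge 1$. Combining this with $\lambda_1(\Omega,s,2)\,\norm{\varphi_n}_{L^2(\Omega)}^2\le\lambda_1(\Omega,s,q_n)+\tfrac1n$ and passing to the limit yields $\liminf_n\lambda_1(\Omega,s,q_n)\ge\lambda_1(\Omega,s,2)$.

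The principal obstacle is to secure such an interpolation inequality with $\alpha_n\to 1$ uniformly in each regime of $(N,s)$, especially in the Morrey-type regime $2^\ast_s=+\infty$ (i.e., $N=1$, $s\ge\tfrac12$); once this ingredient is in place, the rest of the argument is routine bookkeeping.
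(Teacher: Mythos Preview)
Your argument is correct and the perceived obstacle in the regime $2^\ast_s=+\infty$ is in fact harmless: once you assume $\lambda_1(\Omega,s,2)>0$, the result \cite[Corollary~1.2]{F19} quoted in \eqref{immersioni} gives $\lambda_1(\Omega,s,r)>0$ for \emph{every} $r\in[2,2^\ast_s)$, so you may simply fix any $r>2$ (with $r<2^\ast_s$ if this is finite) and use the continuous embedding $\X\hookrightarrow L^r(\Omega)$ to bound $\norm{\varphi_n}_{L^r}$ uniformly, with no case distinction needed.

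Your route, however, is genuinely different from the paper's. Rather than matching upper and lower bounds via competitors, the paper rewrites
\[
\lambda_1(\Omega,s,q)^{-q/2}=\sup\graffa*{\textstyle\int_\Omega\abs v^q\,dx:\text{$v\in C^\infty_0(\Omega)$ with unit Gagliardo seminorm}}
\]
and observes that each map $q\mapsto\int_\Omega\abs v^q\,dx$ is convex (its second derivative is $\int\abs v^q(\log\abs v)^2\,dx\ge0$). The supremum of convex functions being convex and lower semicontinuous, $q\mapsto\lambda_1(\Omega,s,q)^{-q/2}$ is continuous on the interior of its domain of finiteness, hence on all of $[2,2^\ast_s)$. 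This is slicker and yields the stronger conclusion of continuity on the whole super-homogeneous range, not merely the one-sided limit at $q=2$; your approach is more elementary and self-contained, requiring no reformulation but only standard $L^p$ interpolation.
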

\begin{proof}
By \cite[Corollary 1.2]{F19}, we have $\lambda_1(\Omega,s,2)>0$ if and only if
\begin{equation}
\label{immersioni}
\lambda_1(\Omega,s,q)>0 \qquad\text{for every $q\in[2,2^\ast_s)$}
\end{equation}
Hence, we can assume that \eqref{immersioni} holds, otherwise the conclusion is obvious. Therefore, 
\begin{equation}
\label{reciproco}
	\sup_{v\in C^\infty_0(\Omega)}\graffa*{
	\int_\Omega{\abs v^q\,dx}:
	\int_{\R^{N}}\int_{\R^N}\frac{(v(x)-v(y))^2}{\abs{x-y}^{N+2s}}\,dx\,dy=1}= \lambda_1(\Omega,s,q)^{-\frac q2}
\end{equation}
for all $q\in[2,2^\ast_s)$,
which can be seen by a straightforward homogeneity argument. Since
\[
	\diff*[2]{\int_\Omega{\abs v^q\,dx}}q = \int_{\{v\ne0\}}{\abs v^q(\log{\abs v})^2\,dx}\ge0
	\qquad \text{for all $q>1$}
\]
the left hand side of \eqref{reciproco}, as a function of $q$, is the pointwise supremum of a family of
lower semicontinuous convex functions on $(1,2^\ast_s)$. Thus, $q\mapsto\lambda_1(\Omega,s,q)^{-q/2}$
is continuous on $[2,2^\ast_s)$, and thence so it is 
$q\mapsto\lambda_1(\Omega,s,q)$ 
on $[2,2^\ast_s)$, by composition.
\end{proof}

\begin{proposition}\label{3.2}
Assume that the embedding $\X\hookrightarrow L^2(\Omega)$ is compact. Then, there exists $q_\Omega\in(2,2^*_s)$ such that $\lambda_1(\Omega,s,q)$ is simple for all $q\in(2,q_\Omega)$.
\end{proposition}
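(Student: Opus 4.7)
The plan is to argue by contradiction. If no such threshold exists, there is a sequence $q_n\downarrow 2$ for which $\lambda_1(\Omega,s,q_n)$ is not simple. Since the compactness of $\X\hookrightarrow L^2(\Omega)$ combined with \cite[Corollary 1.2]{F19} gives $\lambda_1(\Omega,s,q)>0$ throughout $[2,2^\ast_s)$, and since interpolation extends the compactness to any $L^q(\Omega)$ with $q$ close to $2$, \cref{prop:exist1} applies: for each $n$ I may select two linearly independent positive first eigenfunctions $u_n,v_n$ normalized by $\norm{u_n}_{L^{q_n}(\Omega)}=\norm{v_n}_{L^{q_n}(\Omega)}=1$. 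By \cref{linfty} the sequences $(u_n),(v_n)$ are uniformly bounded in $L^\infty(\Omega)$, and by \cref{lambdacont}, $\lambda_1(\Omega,s,q_n)\to\lambda_1(\Omega,s,2)$. Testing \eqref{eqw} with $u_n$ itself bounds $(u_n)$ in $\X$, so along a subsequence $u_n\rightharpoonup u$ and $v_n\rightharpoonup v$ weakly in $\X$ and strongly in $L^2(\Omega)$. A routine dominated-convergence argument in \eqref{eqw} identifies $u$ and $v$ as nonnegative weak solutions of $\slap\varphi=\lambda_1(\Omega,s,2)\varphi$ with $\norm{u}_{L^2(\Omega)}=\norm{v}_{L^2(\Omega)}=1$; by \cref{prop:simpl} at $q=2$, $u=v$, strictly positive a.e.\ in $\Omega$ thanks to \cref{nonlocmaxprinc}.

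Next I linearize. Set $\phi_n=u_n-v_n$ and $w_n=\phi_n/\norm{\phi_n}_{\X}$, so $\norm{w_n}_{\X}=1$. The mean-value identity
\[
u_n^{q_n-1}-v_n^{q_n-1}=(q_n-1)\,m_n\,(u_n-v_n),\qquad m_n(x):=\int_0^1\bigl[tu_n(x)+(1-t)v_n(x)\bigr]^{q_n-2}\,dt,
\]
together with the weak equations for $u_n,v_n$, yields for every $\varphi\in\X$
\[
\int_{\R^N}\!\!\int_{\R^N}\frac{(w_n(x)-w_n(y))(\varphi(x)-\varphi(y))}{\abs{x-y}^{N+2s}}\,dx\,dy=\lambda_1(\Omega,s,q_n)(q_n-1)\int_\Omega m_n\,w_n\,\varphi\,dx.
\]
The $L^\infty$ bound on $(u_n),(v_n)$ gives $0\le m_n\le C^{q_n-2}$ with $C$ independent of $n$, and since $u>0$ a.e.\ in $\Omega$, $m_n\to 1$ a.e.\ in $\Omega$. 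Combined with the strong $L^2$-convergence $w_n\to w$, dominated convergence justifies passage to the limit, so $w$ weakly solves $\slap w=\lambda_1(\Omega,s,2)w$. Choosing $\varphi=w_n$ first and then sending $n\to\infty$ gives $1=\lambda_1(\Omega,s,2)\norm{w}_{L^2(\Omega)}^2>0$, whence $w\ne0$; by simplicity of the linear first eigenvalue, $w=\alpha u$ for some $\alpha\ne 0$.

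The contradiction now comes from the shared normalization: $\int_\Omega(u_n^{q_n}-v_n^{q_n})\,dx=0$ for every $n$. Writing the integrand via the analogous mean-value identity
\[
u_n^{q_n}-v_n^{q_n}=q_n\,\widetilde m_n\,(u_n-v_n),\qquad\widetilde m_n(x):=\int_0^1\bigl[tu_n(x)+(1-t)v_n(x)\bigr]^{q_n-1}\,dt,
\]
dividing through by $q_n\norm{\phi_n}_{\X}$, and passing to the limit (noting $\widetilde m_n\to u$ a.e.\ and is uniformly bounded, so $\widetilde m_n\to u$ in $L^2(\Omega)$), we obtain $\int_\Omega uw\,dx=0$. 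Substituting $w=\alpha u$ gives $\alpha\norm{u}_{L^2(\Omega)}^2=0$, contradicting $\alpha\ne 0$ and $\norm{u}_{L^2(\Omega)}=1$.

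The main difficulty I anticipate is the rigorous passage to the limit in the nonlinear terms with the variable exponent $q_n$, especially controlling $m_n$ and $\widetilde m_n$. This rests on three ingredients already at hand: the uniform $L^\infty$ bound from \cref{linfty}, the strict positivity of the limit $u$ guaranteed by \cref{nonlocmaxprinc}, and the elementary fact that $t^{q_n-2}\to 1$ on the positivity set as $q_n\to 2^+$. Once these are in place, the dominated convergence arguments used above are routine.
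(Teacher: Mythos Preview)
Your argument is correct and takes a genuinely different route to the contradiction than the paper does. Both proofs set up the same linearization: subtract the equations for two distinct positive normalized first eigenfunctions $u_n,v_n$, renormalize the difference, and pass to the limit along $q_n\downarrow2$ to obtain a nonzero first eigenfunction $w$ of the linear problem, hence $w=\alpha\bar u$ for the unique positive $L^2$-normalized first eigenfunction $\bar u$. The paper then derives the contradiction by a sign-change argument: since $u_n\neq v_n$ share the same $L^{q_n}$ norm, both $\{u_n>v_n\}$ and $\{u_n<v_n\}$ have positive measure; testing with the positive and negative parts of the renormalized difference and invoking a fractional Faber--Krahn inequality bounds $\lvert\{u_n\gtrless v_n\}\rvert$ from below uniformly in $n$, which is incompatible with pointwise convergence to the sign-definite limit $\pm\bar u$. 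Your contradiction instead exploits the constraint directly: from $\int_\Omega u_n^{q_n}=\int_\Omega v_n^{q_n}$ you extract, after a second mean-value linearization and passage to the limit, the orthogonality $\int_\Omega \bar u\,w\,dx=0$, which clashes with $w=\alpha\bar u$, $\alpha\neq0$. This is more elementary --- it dispenses with Faber--Krahn and the measure-of-nodal-sets machinery --- at the cost of tracking one more weight $\widetilde m_n$.

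One technical point to tighten: the implication ``$\widetilde m_n\to u$ a.e.\ and uniformly bounded, hence $\widetilde m_n\to u$ in $L^2(\Omega)$'' is only immediate when $|\Omega|<\infty$, which is not assumed. It is, however, easily recovered from what you already have: since $0\le\widetilde m_n\le C^{q_n-2}(u_n+v_n)/2$ and $u_n,v_n\to u$ strongly in $L^2(\Omega)$, the dominating functions converge in $L^1$, so the generalized dominated convergence theorem yields $\widetilde m_n\to u$ in $L^2(\Omega)$. The same device handles the limit in $\int_\Omega m_n w_n^2\,dx$ (write it as $\int m_n(w_n^2-w^2)+\int m_n w^2$ and use $w_n\to w$ in $L^2$ for the first piece, ordinary DCT with dominator $Cw^2\in L^1$ for the second). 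With these adjustments your proof goes through without restriction on $|\Omega|$.
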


\begin{proof}
Let $(q_n)_{n\in\mathbb N}$ be a decreasing sequence
converging to $2$ and let $(u_n)_{n\in\mathbb N}$ and $(v_n)_{n\in\mathbb N}$ be sequences in $\X$ such that, for all $n\in\mathbb N$,  equation
\eqref{eqw} holds with $\lambda = \lambda_1(\Omega,s,q_n)$ both for
$u=u_n$ and for $u=v_n$. By \cref{prop:exist1}, we may assume
$u_n$ and $v_n$ to be positive functions, nor does it cause any loss of generality assuming them to have unit $L^{q_n}(\Omega)$-norm. Then, by using themselves as test functions in their own equations, in view of \cref{lambdacont} we see that
\[
\lim_{n\to\infty}
\iint_{\R^{2N}} \frac{(u_n(x)-u_n(y))^2}{\abs{x-y}^{N+2s}}\,dx\,dy
=
\lim_{n\to\infty}
\iint_{\R^{2N}} \frac{(v_n(x)-v_n(y))^2}{\abs{x-y}^{N+2s}}\,dx\,dy
=\lambda_1(\Omega,s,2)
\]
Also, because, by assumption, the infimum that defines
$\lambda_1(\Omega,s,2) $ is achieved, we have
\[
\lambda_1(\Omega,s,2) 
=\int_{\R^{N}}\int_{\R^N} \frac{(\bar u(x)-\bar u(y))^2}{\abs{x-y}^{N+2s}}\,dx\,dy
\]
for an appropriate function $\bar u\in\X$ with unit norm in $L^2(\Omega)$. 

By \cref{prop:simpl}, $\bar u$ is uniquely determined; hence, from the assumption that the embedding $\X\hookrightarrow L^2(\Omega)$ is compact, we infer that both $(u_n)_{n\in\mathbb N}$
and $(v_n)_{n\in\mathbb N}$ converge to $\bar u$ strongly in $\X$ and pointwise a.e.\ in $\Omega$,
by using the last two identities in display and the fact that, for any given $\gamma>2$, owing to \cref{linfty} we have
\begin{align*}
\norm{u_n-\bar u}_{L^\gamma(\Omega)}&\le  c\norm{u_n-\bar u}_{L^2(\Omega)}^{\frac2\gamma}\\
	\norm{v_n-\bar u}_{L^\gamma(\Omega)}&\le c
	\norm{v_n-\bar u}_{L^2(\Omega)}^{\frac2\gamma}
\end{align*}
for a constant $c>0$ independent of $n$.

As $q_n>2$, by \cref{linfty}
there is a constant $C$, depending only on the data, such that
\begin{equation}
\label{stima}
    w_n\coloneqq(q_n-1)\int_0^1 [tu_n+(1-t)v_n]^{q_n-2}\,dt \le C
\end{equation}
The latter appears as a weight in the equation for $\psi_n=\norm{u_n-v_n}_{L^2(\Omega)}^{-1}(u_n-v_n)$, viz.
\begin{equation}
\label{0prelim1}
    \int_{\R^{N}}\int_{\R^N} \frac{(\psi_n(x)-\psi_n(y))(\phi(x)-\phi(y))}{\abs{x-y}^{N+2s}}\,dx\,dy
    =\lambda_1(\Omega,s,q_n)\int_{\Omega}w_n\psi_n\phi\,dx
\end{equation}
for all $\phi\in\X$. After choosing $\varphi=\psi_n$ in \eqref{0prelim1}, in view of \eqref{stima} we see that $(\psi_n)_{n\in\N}$
is bounded in $\X$.  Thus, by assumption, a subsequence (not relabelled) converges to some limit $\psi$, weakly in $\X$ and strongly in $L^2(\Omega)$. Then, $\psi$ is bound to have unit norm in $L^2(\Omega)$, in particular $\psi\neq0$.
\vskip.2cm

We claim that
\begin{equation}
\label{l2loclim}
w_n\to1\quad\text{in $L^2_{\rm loc}(\Omega)$}
\end{equation}
Thence, recalling also \cref{lambdacont}, by passing to the limit in \eqref{0prelim1} we arrive at 
\begin{equation}
\label{0postlim1}
\int_{\R^{N}}\int_{\R^N} \frac{(\psi(x)-\psi(y))(\phi(x)-\phi(y))}{\abs{x-y}^{N+2s}}\,dx\,dy
    =\lambda_1(\Omega,s,2)\int_{\Omega}\psi\phi\,dx
\end{equation}
for all $\phi\in\X$, i.e., $\psi$ is a non-trivial first eigenfunction. By \cref{prop:simpl}, it follows that either $\psi=\bar u$ or $\psi=-\bar u$. On the other hand,
we can plug in $\varphi=\psi_n^\pm$ into \eqref{0prelim1} and deduce from \eqref{stima}, for $n$ large enough, that
\[
	\int_{\R^{N}}\int_{\R^N}
	 \frac{(\psi_n^\pm(x)-\psi_n^\pm(y))^2}{\abs{x-y}^{N+2s}}\,dx\,dy
    \le 2C\lambda_1(\Omega,s,2)\int_{\Omega}{\abs*{\psi_n^\pm}}^2\,dx
\]

We argue by contradiction and we assume that $u_n\neq v_n$, for all $n\in\N$. Hence, both
 $\Omega_n^+=\{u_n>v_n\}$ and $\Omega_n^-=\{u_n< v_n\}$
must have non-zero measure, because $u_n$ and $v_n$ have the same $L^q(\Omega)$-norm. Then, we can estimate from below the left hand side to get
\[
\abs*{\Omega_n^\pm}^\frac{2s}{N}\int_{\R^{N}}\int_{\R^N} 
	 \frac{(\psi_n^\pm(x)-\psi_n^\pm(y))^2}{\abs{x-y}^{N+2s}}\,dx\,dy	
	 \ge C' \int_\Omega{\abs*{\psi_n^\pm}}^2\,dx
\]
where $C'$ depends only on $N$ and $s$;
indeed, if $\Omega_n^\pm$ has infinite measure, then the latter is trivial; otherwise, we can deduce it
from the definition of $\lambda_1(\Omega,s,2)$,
its scaling properties and the fractional Faber-Krahn inequality (see \cite[Theorem 3.5]{BLP}).
Combining the upper and the lower bound yields
\(
	\inf_{n\in\N}{\abs*{\Omega_n^\pm}}>0
\),
which is inconsistent with the pointwise convergence of
$\psi_n$ to its constant sign limit $\psi$.

Thus, we are left with proving the claim \eqref{l2loclim}.
To do so,
we consider a bounded open set $\Omega'\Subset\Omega$ and observe that
\begin{equation*}
    \begin{split}
       \int_{\Omega'}(w_n-1)^2\,dx&=\int_{\Omega'}\tonda*{\int_0^1(q_n-1)[tu_n+(1-t)v_n]^{q_n-2}\,dt-1}^2\,dx\\
       &\le\int_{\Omega'}\int_0^1\quadra*{(q_n-1)[tu_n+(1-t)v_n]^{q_n-2}-1}^2\,dt\,dx\\
       &\le2(q_n-2)^2\int_{\Omega'}\int_0^1\tonda*{[tu_n+(1-t)v_n]^{q_n-2}}^2\,dt\,dx\\
       &+2\int_{\Omega'}\int_0^1\tonda*{[tu_n+(1-t)v_n]^{q_n-2}-1}^2\,dt\,dx\\
       &\le 2 C^2\abs*{\Omega'}+2\int_{\Omega'}\int_0^1\tonda*{\abs{tu_n+(1-t)v_n}^{q_n-2}-1}^2\,dt\,dx
    \end{split}
\end{equation*}
By the pointwise convergence a.e.\ in $\Omega$
of both $u_n$ and $v_n$ to $\bar u$
and by \eqref{stima}, the latter implies
that $w_n\to1$ in $L^2(\Omega')$ by dominated convergence theorem.
Since $\Omega'$ was arbitrary, that entails \eqref{l2loclim}, as desired.
\end{proof}

\section{Fractional Lane-Emden densities}\label{wfrac}

In this section we always limit our attention to exponents $q\in(1,2)$ and
we prove some properties of the fractional Lane-Emden density of $\Omega$.
We recall that in this paper the function 
$w_{\Omega,s,q}$ is introduced in \cref{defiw}, under the assumption that
$\lambda_1(\Omega,s,q)>0$, as a non-negative weak solution of \eqref{LE} (see also \cref{welldefi}).

\begin{remark}\label{uniw}
Equation \eqref{LE} has indeed a unique non-negative weak solution; by \cref{prop:sign2}, any such function is
a non-negative $q$-semilinear $s$-eigenfunction with $L^q(\Omega)$-norm equal to $\lambda_1(\Omega,s,q)^{\frac1{q-2}}$, whence the uniqueness by \cref{prop:simpl}.
\end{remark}

\begin{proposition}\label{CP}
Let $q\in(1,2)$, let $\Omega_1$ and $\Omega_2$ be bounded open sets and, for $i\in\{1,2\}$, let $w_i $ be the fractional Lane-Emden density $w_{\Omega_i,s,q}$ on $\Omega_i$. Then 
\[
\Omega_1\subset \Omega_2 \implies w_1\le w_2
\]
\end{proposition}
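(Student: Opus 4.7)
I would prove this by an energy rearrangement argument, relying on the fact (Remark 4.1) that $w_i$ is the unique non-negative minimiser of the free energy
\[
\mathcal{E}(\varphi) = \frac12 \iint_{\R^{2N}} \frac{(\varphi(x)-\varphi(y))^2}{\abs{x-y}^{N+2s}}\,dx\,dy - \frac{1}{q}\int_{\Omega_i} \varphi^q\,dx
\]
over $\mathcal{D}^{s,2}_0(\Omega_i)$. The plan is to set $v_- = w_1 \wedge w_2$ and $v_+ = w_1 \vee w_2$, show that $v_-$ is admissible in the variational problem defining $w_1$ and $v_+$ is admissible in the one defining $w_2$, and then compare the energies.

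First, I would verify admissibility. Both $v_-$ and $v_+$ are non-negative; since $w_1 \equiv 0$ in $\R^N\setminus \Omega_1$ and $w_2\ge 0$ everywhere, we have $v_- \equiv 0$ outside $\Omega_1$ and $v_+ \equiv w_2$ outside $\Omega_1$ (hence $v_+\equiv 0$ outside $\Omega_2$). Together with the Lipschitz character of the $\min$/$\max$ operations, this yields $v_-\in \mathcal{D}^{s,2}_0(\Omega_1)$ and $v_+\in \mathcal{D}^{s,2}_0(\Omega_2)$ by standard truncation/approximation arguments; this membership step is the main technical point to handle carefully, because the space is defined as a metric completion and its description for very rough $\Omega$ is delicate (cf.\ Remark 2.4).

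The heart of the argument is then a pair of pointwise inequalities: for every $a,b,c,d\ge 0$,
\begin{equation*}
\bigl((a\wedge c)-(b\wedge d)\bigr)^2 + \bigl((a\vee c)-(b\vee d)\bigr)^2 \le (a-b)^2 + (c-d)^2,
\end{equation*}
which is checked by elementary case analysis, together with the identity $(a\wedge c)^q + (a\vee c)^q = a^q + c^q$. Applied with $a=w_1(x),\ b=w_1(y),\ c=w_2(x),\ d=w_2(y)$, dividing by $\abs{x-y}^{N+2s}$ and integrating gives
\[
[v_-]_s^2 + [v_+]_s^2 \le [w_1]_s^2 + [w_2]_s^2,
\]
while the algebraic identity integrates to $\int v_-^q + \int v_+^q = \int w_1^q + \int w_2^q$. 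Summing produces $\mathcal{E}(v_-) + \mathcal{E}(v_+) \le \mathcal{E}(w_1) + \mathcal{E}(w_2)$.

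On the other hand, by admissibility and minimality, $\mathcal{E}(w_1)\le \mathcal{E}(v_-)$ and $\mathcal{E}(w_2)\le \mathcal{E}(v_+)$, so all inequalities are equalities. In particular $v_-$ realises the minimum in \eqref{defw} on $\Omega_1$, and by the uniqueness of the non-negative minimiser (Remark 4.1 combined with \cref{prop:simpl}) we conclude $v_- = w_1$, which is exactly $w_1 \le w_2$. The simplest-looking alternative (testing with $(w_1-w_2)_+$ in the subtracted weak equations) fails because for $q\in(1,2)$ the right-hand side $\int (w_1^{q-1}-w_2^{q-1})(w_1-w_2)_+\,dx$ has the \emph{wrong sign} for a direct conclusion; the min/max rearrangement bypasses this by exploiting the global variational characterisation instead.
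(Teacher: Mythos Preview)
Your proof is correct and follows essentially the same route as the paper: both arguments rely on the submodularity inequality for the Gagliardo seminorm (your displayed pointwise inequality is a rearrangement of the one the paper writes as $(a\vee b-c\vee d)^2-(a-c)^2\le(b-d)^2-(a\wedge b-c\wedge d)^2$), the additivity identity for the $L^q$ term, and the minimality/uniqueness of the Lane-Emden densities. The only cosmetic difference is that the paper concludes by showing $w_1\vee w_2$ is a minimiser on $\Omega_2$ and invoking uniqueness there, whereas you conclude symmetrically via $w_1\wedge w_2$ on $\Omega_1$; these are equivalent.
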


\begin{proof}
Let us write $w_i = w_{\Omega_i,s,q}$ in $\Omega_i$ and $w_i=0$ in 
$\mathbb R^N\setminus \Omega_i$, for $i\in\{1,2\}$.
The inequality
\[
	(a\vee b-c\vee d)^2- (a-c)^2
	\le (b-d)^2- (a\wedge b - c\wedge d)^2 
\]
with $a=w_1(x)$, $b=w_2(x)$, $c=w_1(y)$ and $d=w_2(y)$ entails
the submodularity property
\begin{multline*}
	\frac12\iint_{\mathbb R^{2N}}\frac{(
		(w_1\vee w_2)(x)-(w_1\vee w_2)(y))^2}{\abs{x-y}^{N+2s}}\,dx\,dy
	- 	\frac12\iint_{\mathbb R^{2N}} \frac{(
		w_1(x)-w_1(y))^2}{\abs{x-y}^{N+2s}}\,dx\,dy\\
	\le
	\frac12\iint_{\mathbb R^{2N}} \frac{(
		w_2(x)-w_2(y))^2}{\abs{x-y}^{N+2s}}\,dx\,dy-
		\frac12\iint_{\mathbb R^{2N}} \frac{(
		(w_1\wedge w_2)(x)-(w_1\wedge w_2)(y))^2}{\abs{x-y}^{N+2s}}\,dx\,dy
\end{multline*}
By minimality of $w_1$, we also have
\begin{multline*}
\frac12\int_{\mathbb R^N}\int_{\mathbb R^N} \frac{(
		w_1(x)-w_1(y))^2}{\abs{x-y}^{N+2s}}\,dx\,dy
	-\frac1q \int_{\Omega_1} w_1^q\,dx\\
	\le\frac12\int_{\mathbb R^N}\int_{\mathbb R^N} \frac{(
		(w_1\wedge w_2)(x)-(w_1\wedge w_2)(y))^2}{\abs{x-y}^{N+2s}}\,dx\,dy
	-\frac1q \int_{\Omega_1} (w_1\wedge w_2)^q\,dx
\end{multline*}
Taking into account the integral identity
\[
		\frac1q\int_{\Omega_1} w_1^q\,dx
		-\frac1q
	\int_{\Omega_2} (w_1\vee w_2)^q\,dx
	=
	\frac1q\int_{\Omega_1} (w_1\wedge w_2)^q\,dx
	-\frac1q\int_{\Omega_2} w_2^q\,dx
\]
and summing up, then, gives
\begin{multline*}
	\frac12\int_{\mathbb R^N}\int_{\mathbb R^N} \frac{(
		(w_1\vee w_2)(x)-(w_1\vee w_2)(y))^2}{\abs{x-y}^{N+2s}}\,dx\,dy-\frac1q\int_{\Omega_2} (w_1\vee w_2)^q\,dx\\ 
	\le
	\frac12\int_{\mathbb R^N}\int_{\mathbb R^N} \frac{(
		w_2(x)-w_2(y))^2}{\abs{x-y}^{N+2s}}\,dx\,dy-\frac1q\int_{\Omega_2}  w_2^q\,dx
\end{multline*}
Hence, by the minimality property of $w_2$, we infer that
$w_2= w_1\vee w_2$, as desired.
\end{proof}

We can extend \cref{defiw} to the case $\lambda_1(\Omega,s,q)=0$, as done in the local case (see~\cite{BFR}).

\begin{definition}\label{wqs}
Let $q\in(1,2)$. 
Then, we set
\begin{equation}
\label{defw3}
	w_{\Omega,s,q}(x) =\lim_{r\to\infty}w_{\Omega\cap B_r,s,q}(x) \qquad \text{for all $x\in\Omega$}
\end{equation} 
and we continue to call $w_{\Omega,s,q}$ the $(s,q)$--Lane-Emden density of $\Omega$.
\end{definition}

By \cref{CP}, the limit \eqref{defw3} always exists, so that
the definition is well posed. The following lemma assures its consistency with \cref{defiw}.

\begin{lemma}
Let $q\in(1,2)$ and assume that
$\lambda_1(\Omega,s,q)>0$.
For every $r>0$, we set $w_r(x)=w_{\Omega\cap B_r,s,q}(x)$ if $x\in B_r$ and $w_r(x)=0$ otherwise. Then,
$w_r$ converge pointwise to $w_{\Omega,s,q}$ as $r\to+\infty$.
\end{lemma}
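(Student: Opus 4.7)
The plan is to exploit the comparison principle of \cref{CP} to show that $(w_r)$ is monotone non-decreasing and bounded above by $w_{\Omega,s,q}$, identify its pointwise limit as a non-negative weak solution of \eqref{LE} on $\Omega$, and conclude by the uniqueness statement of \cref{uniw}.

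First I would apply \cref{CP} to the chain of inclusions $\Omega\cap B_r\subset\Omega\cap B_{r'}\subset\Omega$ for $r\le r'$ to deduce that $r\mapsto w_r$ is non-decreasing and $w_r\le w_{\Omega,s,q}$ pointwise. Hence the limit $w^\ast(x)\coloneqq\lim_{r\to+\infty}w_r(x)$ exists pointwise and satisfies $0\le w^\ast\le w_{\Omega,s,q}$ in $\Omega$. Testing the Lane-Emden equation satisfied by $w_r$ against $w_r$ itself then yields
\[
\iint_{\R^{2N}}\frac{(w_r(x)-w_r(y))^2}{\abs{x-y}^{N+2s}}\,dx\,dy=\int_{\Omega}w_r^q\,dx\le\int_{\Omega}w_{\Omega,s,q}^q\,dx<+\infty,
\]
so $(w_r)$ is bounded in $\X$. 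Since the embedding $\X\hookrightarrow L^q(\Omega)$ is compact (this is where the hypothesis $\lambda_1(\Omega,s,q)>0$ enters, via \cite[Theorem 1.3]{F19}), the monotone pointwise limit $w^\ast$ is necessarily the weak $\X$-limit and the strong $L^q(\Omega)$-limit of the full sequence, and $w^\ast\in\X$.

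Next I would pass to the limit in the weak formulation. For each $\varphi\in C^\infty_0(\Omega)$ and every $r$ with $\mathrm{supp}\,\varphi\Subset\Omega\cap B_r$, the function $w_r$ satisfies
\[
\iint_{\R^{2N}}\frac{(w_r(x)-w_r(y))(\varphi(x)-\varphi(y))}{\abs{x-y}^{N+2s}}\,dx\,dy=\int_{\Omega}w_r^{q-1}\varphi\,dx.
\]
The left hand side converges by weak $\X$-convergence, while on the right the pointwise-dominated bound $w_r^{q-1}\abs\varphi\le w_{\Omega,s,q}^{q-1}\abs\varphi\in L^1(\Omega)$ allows me to invoke dominated convergence. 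By density of $C^\infty_0(\Omega)$ in $\X$, the resulting identity extends to every test function in $\X$, so $w^\ast$ is a non-negative weak solution of \eqref{LE} on $\Omega$.

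It remains to rule out $w^\ast\equiv 0$ so that \cref{uniw} applies. Pick $r_0$ large enough that $\Omega\cap B_{r_0}\neq\emptyset$; since $\lambda_1(\Omega\cap B_{r_0},s,q)\ge\lambda_1(\Omega,s,q)>0$, $w_{r_0}$ is a non-trivial non-negative weak solution of \eqref{LE} on $\Omega\cap B_{r_0}$, hence strictly positive there by the strong minimum principle \cref{nonlocmaxprinc}. Monotonicity then gives $w^\ast\ge w_{r_0}>0$ on $\Omega\cap B_{r_0}$, so $w^\ast$ is non-trivial. \cref{uniw} then forces $w^\ast=w_{\Omega,s,q}$. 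The only non-routine step is the passage to the limit in the nonlinear right hand side, which is clean here thanks to the uniform $L^q$-domination by $w_{\Omega,s,q}$ furnished by the comparison principle.
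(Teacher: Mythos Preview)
Your argument is correct, but it takes a different route from the paper's. The paper passes to the limit in the \emph{minimality inequality} \eqref{minwr} for the free energy: once one knows $w_r\rightharpoonup\overline w$ in $\X$ and $w_r\to\overline w$ in $L^q(\Omega)$, weak lower semicontinuity of the Gagliardo seminorm plus strong $L^q$-convergence show that $\overline w$ is a minimiser of \eqref{defw}, and uniqueness of the minimiser gives the conclusion with no need to check non-triviality separately (the minimum of the energy is strictly negative). You instead pass to the limit in the \emph{Euler--Lagrange equation} and invoke \cref{uniw}; this forces you to rule out the trivial solution, which you do cleanly via the strong minimum principle. Both routes are short and rely on the same compactness input.

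Two small remarks on your version. First, \cref{CP} is stated for \emph{bounded} $\Omega_1,\Omega_2$, so your appeal to it for the inclusion $\Omega\cap B_{r'}\subset\Omega$ is not literally covered; the proof of \cref{CP}, however, only uses that both minimisers exist, and goes through verbatim when $\Omega_2$ is unbounded with $\lambda_1(\Omega_2,s,q)>0$ (the paper itself asserts $\overline w\le w_{\Omega,s,q}$ without further comment). Second, your $\X$-bound via $\int w_r^q\le\int w_{\Omega,s,q}^q$ is perfectly fine, but note the paper obtains the same bound without any comparison at all, directly from $\int w_r^q=[w_r]^2_{\X}$ and the definition of $\lambda_1(\Omega,s,q)$ (using $q<2$); this is slightly more self-contained.
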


\begin{proof} 
As $r\to+\infty$, the $(s,q)$--Lane-Emden density $w_r$ on $\Omega\cap B_r$
converges to an appropriate function  $\overline w\le  w_{\Omega,s,q}$.
By minimality, for every given $\varphi \in C^\infty_0(\Omega)$ there exists $R_\varphi>0$ such that, for all $r\ge R_\varphi$, we have
\begin{multline}
\label{minwr}
	\frac12\int_{\R^N}\int_{\R^N} \frac{(w_r(x)-w_r(y))^2}{\abs{x-y}^{N+2s}}\,dx\,dy - \frac1q\int_{\Omega\cap B_r}w_r^q\,dx\\
	\le\frac12\int_{\R^N}\int_{\R^N} \frac{(\varphi(x)-\varphi(y))^2}{\abs{x-y}^{N+2s}}\,dx\,dy - \frac1q\int_{\Omega\cap B_r}{\abs\varphi^q\,dx}
\end{multline}
Note that the equation for $w_r$ is \eqref{eqw} with $\Omega\cap B_r$ in place
of $\Omega$, $u=w_r$ and $\lambda = \norm{w_r}_{L^q(\Omega\cap B_r)}^{q-2}$.
Testing with $\varphi=w_r$ the equation for $w_r$, we get
\[
\begin{split}
	\int_{\R^N}\int_{\R^N} \frac{(w_r(x)-w_r(y))^2}{\abs{x-y}^{N+2s}}\,dx\,dy & = \int_{\Omega\cap B_r}w_r^q\,dx\\ 
	& \le
	\lambda_1(\Omega,s,q)^{\frac{q}{2}}\tonda*{\int_{\R^{N}}\int_{\R^N} \frac{(w_r(x)-w_r(y))^2}{\abs{x-y}^{N+2s}}\,dx\,dy}^\frac{q}{2}
\end{split}
\]	
where in the second inequality we also used that $w_r=0$ in $\Omega\setminus B_r$. Since $q<2$,
we deduce that $w_r$ converges to $\overline w$ weakly in $\X$ and strongly in $L^q(\Omega)$. Thus, passing to the limit as $r\to\infty$ in \eqref{minwr},
we obtain
\[
	\frac12\iint_{\R^{2N}}\frac{(\overline w(x)-\overline w(y))^2}{\abs{x-y}^{N+2s}}\,dx\,dy - \frac1q\int_\Omega\overline w^q\,dx
	\le \frac12\iint_{\R^{2N}} \frac{(\varphi(x)-\varphi(y))^2}{\abs{x-y}^{N+2s}}\,dx\,dy - \frac1q\int_\Omega{\abs\varphi^q\,dx}
\]
for all $\varphi\in C^\infty_0(\Omega)$, which, by uniqueness, implies that $\overline w = w_{\Omega,s,q}$.
\end{proof}

Following \cite{Knonloc}, for all $w\in\mathcal D^{s,2}_0\tonda*{\R^N}$ and for all $x_0\in\R^N$, we set
\[
\Tail(w,x_0,\rho)=\rho^{2s}\int_{\R^N\setminus B_\rho(x_0)}\frac{\abs{w(x)}}{\abs{x-x_0}^{N+2s}}\,dx
\]
The only difference between next proposition and \cite[Theorem 1.1]{Knonloc} is that we consider a non-homogeneous equation. We present the proof of \cite{Knonloc} for sake of completeness. Clearly, a similar estimate holds for non-homogeneous equations with data in $L^\gamma$, $\gamma>N/s$, but that is not relevant to our case.

\begin{proposition}\label{lm:subsoLE}
Let $\mathscr U\subset\R^N$ be an open set, $x_0\in\mathscr U$, $\delta\in(0,1]$, $0<r<\dist(x,\partial\mathscr U)$, 
$f\in L^{\infty}(\mathscr U)$
and let $w\in \mathcal{D}_0^{s,2}\tonda*{\R^N}$ be a non-negative weak subsolution of
$(-\Delta)^s w=f$ in $\mathscr U$, i.e., $w\ge0$ in $\mathscr U$ and
\begin{equation*}
\int_{\R^{N}}\int_{\R^N} \frac{(w(x)-w(y))(\phi(x)-\phi(y))}{\abs{x-y}^{N+2s}}
\,dx\,dy\le \int_{\mathscr U} f w \varphi\,dx
\end{equation*}
for all non-negative $\varphi\in C^\infty_0(\mathscr U)$. Then
\begin{equation*}
\ess_{B_{r/2}(x_0)}w\le C\quadra*{
\delta\Tail(w,x_0,r/2)  + 
\delta r^{2s}\norm f_{L^\infty(\mathscr U)}
+ \tonda*{\frac{ r^{N-2s}}{\delta}}^\frac{N}{4s} \tonda*{\intmed_{B_r(x_0)} w^2\,dx}^\frac12}
\end{equation*}
where the constant depends only on $N$ and $s$.
\end{proposition}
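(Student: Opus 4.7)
The strategy is the nonlocal De Giorgi--Moser iteration of \cite{Knonloc}, with the $L^\infty$ datum $f$ absorbed into the truncation threshold. Throughout, write $w_k = (w-k)_+$ for $k\ge 0$.

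First, for any $k\ge 0$ and any nested radii $\tfrac{r}{2}\le\tilde\rho<\rho\le r$, fix a cutoff $\eta\in C^\infty_0(B_{(\rho+\tilde\rho)/2}(x_0))$ with $\eta\equiv 1$ on $B_{\tilde\rho}(x_0)$ and $\norm{\nabla\eta}_{L^\infty}\le C/(\rho-\tilde\rho)$, and test the subsolution inequality with $\varphi=w_k\eta^2$. The pointwise algebraic identity bounding $((w_k\eta)(x)-(w_k\eta)(y))^2$ by a combination of $(w_k(x)-w_k(y))^2(\eta(x)\vee\eta(y))^2$ and $w_k(x)w_k(y)(\eta(x)-\eta(y))^2$ yields a fractional Caccioppoli inequality whose right-hand side splits into three pieces: a local cutoff term $\tfrac{C}{(\rho-\tilde\rho)^{2s}}\int_{B_\rho} w_k^2\,dx$; a nonlocal interaction term bounded by $\tfrac{C\rho^N}{(\rho-\tilde\rho)^{N+2s}}\Tail(w,x_0,r/2)\int_{B_\rho} w_k\,dx$, coming from pairs $(x,y)\in B_\rho\times(\R^N\setminus B_\rho)$; and a forcing term $\norm f_{L^\infty(\mathscr U)}\int_{B_\rho} w_k\,dx$.

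Second, apply the fractional Sobolev inequality to $w_k\eta$ (or, in the borderline case $N\le 2s$, the substitute embedding into $L^{2q}$ used in the proof of \cref{linfty}) and combine it with H\"older's inequality on the superlevel set $\{w>k\}\cap B_{\tilde\rho}$ to upgrade the estimate above into a self-improving bound on $\intmed_{B_{\tilde\rho}} w_k^2\,dx$. Set $\rho_j=\tfrac{r}{2}(1+2^{-j})$ and $k_j=k(1-2^{-j})$ for a threshold $k>0$ to be chosen, and let $Y_j=\intmed_{B_{\rho_j}} w_{k_j}^2\,dx$. Careful bookkeeping turns the self-improving inequality into a geometric recursion $Y_{j+1}\le C_\ast b^j Y_j^{1+\alpha}$ with $\alpha=2s/N$, which by the standard Giaquinta-type iteration lemma forces $Y_j\to 0$ as soon as $Y_0\le C_\ast^{-1/\alpha}b^{-1/\alpha^2}$.

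Third, choose the threshold $k$ so that the $\Tail$ and forcing contributions accumulated into $Y_0$ are controlled by a fixed multiple of $(\intmed_{B_r} w^2\,dx)^{1/2}$; the free parameter $\delta\in(0,1]$ calibrates the trade-off between absorbing the $\Tail(w,x_0,r/2)$ and $r^{2s}\norm f_{L^\infty}$ pieces on the one hand and the $L^2$-term on the other, so that unrolling the recursion produces precisely the announced prefactor $(r^{N-2s}/\delta)^{N/(4s)}$. The expected main obstacle is the treatment of the tail contribution: since a cutoff cannot screen off the nonlocal pairing with $\R^N\setminus B_\rho$, the Caccioppoli estimate generates a term that is only \emph{linear} in $w_k$ and weighted by $\Tail$; propagating this linear term through the Moser scheme while still obtaining a recursion of the form $Y_{j+1}\le C_\ast b^j Y_j^{1+\alpha}$ requires the delicate calibration of $k$ and $\delta$ described above, and is the step where the argument departs most from its classical local counterpart.
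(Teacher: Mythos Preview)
Your proposal is correct and follows essentially the same route as the paper's proof: both carry out the De Giorgi iteration of \cite{Knonloc} with truncation levels $k_j=k(1-2^{-j})$ and shrinking radii $\rho_j=\tfrac{r}{2}(1+2^{-j})$, derive the fractional Caccioppoli estimate with the three contributions (local, tail, forcing), upgrade via Sobolev to a geometric recursion $Y_{j+1}\le C b^j Y_j^{1+2s/N}$, and then calibrate the threshold $k$ against $\delta\Tail(w,x_0,r/2)+\delta r^{2s}\norm f_{L^\infty}$ so that the iteration closes. The only cosmetic differences are that the paper quotes the Caccioppoli inequality from \cite{BP} rather than deriving it, and tracks $Y_k=\bigl(\int_{B_k}w_k^2\bigr)^{1/2}$ instead of the averaged quantity.
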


\begin{proof}
Let $r_k=\frac r2\tonda*{1+2^{-k}}$, $\tilde r_k=(r_{k+1}+r_k)/2$, $B_k=B_{r_k}(x_0)$ and $\tilde B_k = B_{\tilde r_k}(x_0)$.
We take $h>0$ and we define $h_k=\tonda*{1-2^{-k}}h$
and $\tilde h_k=(h_k+h_{k+1})/2$.
We take a cut-off function $\zeta_k\in C^\infty_0\tonda*{\tilde B_k}$,
with $\abs{\nabla\zeta_k}\le 2^{k+1}r^{-1}$, from $B_{k+1}$ to $\tilde B_k$.
We set $w_k=(w-h_k)_+$ and $\tilde w_k=\tonda*{w-\tilde h_k}_+$ and we observe that, by Minkowski's inequality and
fractional Poincaré-Sobolev inequality, there exists an absolute constant $C_0>0$ with
\begin{equation}
\label{T6.1}
\tonda*{
\norm{\tilde w_k\zeta_k}_{L^{2^\ast_s}(B_k)}-\intmed_{B_k} w_k\zeta_k\,dx}^2
\le \frac{C_0}{r^{N-2s}}
\int_{B_k}\int_{B_k} \frac{(\tilde w_k(x)\zeta_k(x)-\tilde w_k(y)\zeta_k(y))^2}{\abs{x-y}^{N+2s}}\,dx\,dy
\end{equation}
By the fractional Caccioppoli inequality (see \cite[Proposition 3.5]{BP}), the
right hand side in \eqref{T6.1} must not exceed
$C_1\mathcal{I}_1+C_2\mathcal{I}_2+C_3\mathcal{I}_3$, where
$C_1,C_2,C_3$ are constants depending only on $N$ and $s$ and 
\begin{align*}
\mathcal{I}_1&=r^{-N+2s} \int_{B_k}\int_{B_k}
\frac{(\zeta_k(x)-\zeta_k(y))^2}{|x-y|^{N+2s}}
\tonda*{\tilde w_k(x)^2+\tilde w_k(y)^2}\,dx\,dy\\
\mathcal{I}_2&=r^{-N+2s}\tonda*{
\sup_{y\not\in \tilde B_k} \int_{\R^N\setminus B_{r/2}(x_0)}
\frac{\tilde w_k(x)}{\abs{x-y}^{N+2s}}\,dx}\int_{B_k}  \tilde w_k\zeta_k^2\,dx\\
\mathcal{I}_3&= r^{-N+2s}\int_{B_k} f \tilde w_k\zeta_k^2\,dx
\end{align*}

In order to estimate the sum of these three terms, we set
\begin{equation}
\label{T666}
	Y_k=\tonda*{\int_{B_k}w_k^2\,dx}^\frac12
\end{equation}
Recalling that $\abs{\nabla\zeta_k}^2\le r^{-2}4^{k+2}$, $0\le\zeta_k\le1$ and $\tilde w_k\le w_k$, it is easily seen that
\begin{subequations}
\label{t61}
\begin{equation}
	\sqrt{C_1\mathcal{I}_1}\le r^{-\frac N2} 2^k Y_k
\end{equation}
Since $\frac{\abs{x-x_0}}{\abs{x-y}}\le \frac{\abs{x-x_0}}{\abs{x-x_0}-\abs{x_0-y}}\le 2^{k+1}$
for all $x\in \R^N\setminus B_{r/2}(x_0)$
and $y\in \tilde B_{k}$, we also see that
\begin{equation}
\begin{split}
\sqrt{C_2\mathcal{I}_2} & \le \frac{2^{\frac{N+2s}{2}k}}{r^{N/2}}\Tail(w,x_0,r/2)^\frac12
\tonda*{\int_{B_k}\tilde w_k\zeta_k^2\,dx}^\frac12
\\ &
\le 2\frac{2^{\frac{N+2s+1}{2}k}}{r^{N/2}}\tonda*{\frac{\Tail(w,x_0,r/2)}{h}}^\frac12 Y_k
\end{split}
\end{equation}
where in the last inequality we also used that
$\tilde w_k\zeta_k^2 \le 2^k(4/h)w_k^2$ in $B_k\cap\graffa*{w\ge \tilde h_k}$.
Similarly, we also have
\begin{equation}
	\sqrt{C_3\mathcal{I}_3}
	\le r^{-\frac{N}{2}+s} \norm f_{L^\infty(\mathscr U)}^{1/2} \tonda*{\int_{B_k}\tilde w_k \zeta_k^2\,dx}^\frac{1}{2}
	\le 2r^{-\frac{N}{2}+s}2^k \frac{\norm f_{L^\infty(\mathscr U)}^{1/2}}{\sqrt{ h}}Y_k
\end{equation}
\end{subequations}

The elementary inequality $\sqrt{a+b+c}\le \sqrt{a}+\sqrt{b}+\sqrt{c}$ for positive numbers $a,b,c$, the fact that
$(C_1\mathcal{I}_1+C_2\mathcal{I}_2+C_3\mathcal{I}_3)^{1/2}$
is an upper bound for the right hand side in \eqref{T6.1} and the
inequalities \eqref{t61} imply
\[
\norm{\tilde w_k\zeta_k}_{L^{2^\ast_s}(B_k)}
\le C_4 r^{-\frac N2}Y_k
\quadra*{
2^k+2^{\frac{N+2s+1}{2}k} \tonda*{
\frac{\Tail(w,x_0,r/2)}{ h}}^\frac12 
+2^k  r^{s} \frac{\norm f_{L^\infty(\mathscr U)}^{1/2}}{\sqrt{h}}}
\]
On the other hand, setting $\alpha=2s/N$ and $\beta=2s/(N-2s)$, we have
\[
	\norm{\tilde w_k\zeta_k}_{L^{2^\ast_s}(B_k)} \ge C_5  h ^{\alpha} 2^{-\alpha k} Y_{k+1}^{\frac{1}{1+\beta}}
\]
where also the constant $C_5$ depends just on $N$ and $s$.
To see that, we use that
for all points $x\in B_{k+1}$ we have
\(
\tilde w_k(x)\zeta_k(x)=\tilde w_k(x)
= 2^{-(k+2)}h+w_{k+1}(x),
\)
whence it follows that
$(\tilde w_k\zeta_k)^{2^{\ast}_s}\ge ( h/4)^{2^\ast_s-2} 2^{-(2^\ast_s-2)k}w_{k+1}^2 $  in $B_{k+1}$, and this gives the desired lower bound.

Therefore, for appropriate constants $C_6$ and $\Lambda_0$, depending only on $N$ and $s$, we have
\[
Y_{k+1}\le \frac{C_6\Lambda_0^k Y_k^{1+\beta}}{ r^{\frac{N}{2}(1+\beta)}h^{\alpha(1+\beta)}}
\quadra*{
1+\tonda*{\frac{\Tail(w,x_0,r/2)}{h}}^\frac12 
+\tonda*{\frac{r^{2s}\norm f_{L^\infty(\mathscr U)}}{h}}^\frac12}^{1+\beta}
\]
and the latter takes the form
$Y_{k+1} \le r^{-\frac{N}{2}(1+\beta)}h^{-\alpha(1+\beta)}\delta^{-\frac{1+\beta}{2}} C_7\Lambda_0^k Y_k^{1+\beta}$,
\begin{subequations}
provided that
\label{htildelarge}
\begin{equation}
 h\ge \delta\Tail(w,x_0,r/2) + \delta r^{2s}\norm f_{L^\infty(\mathscr U)}
\end{equation}
Then, by setting $C_8=C_7^{1/\beta}$,
$\Lambda = \Lambda_0^{1/\beta}$ and $Z_k = C_8\Lambda^k Y_k$, we obtain the recursive relation
\[
	Z_{k+1}\le \tonda*{\Lambda r^{-\frac{N}{2}(1+\beta)}h^{-\alpha(1+\beta)}\delta^{-\frac{1+\beta}{2}} Z_k^{\beta}}Z_k
\]
If we also have
\begin{equation}
 h\ge \delta^{-\frac{1}{2\alpha}} (C_7\Lambda)^{\frac{1}{\alpha(1+\beta)}} 
 r^{-\frac{N}{2\alpha}}\norm w_{L^2(B_r(x_0))}
\end{equation}
then, from the recursive relation, we infer by induction that $Z_k\le Z_0$ for all $k\in\mathbb N$, which means, by
construction, that $Y_k\le \Lambda^{-k}Y_0$. In view of \eqref{T666}, it follows that
\[
\int_{B_{r/2}(x_0)} (w- h)_+^2\,dx
\le \liminf_{k\to\infty}	
\int_{B_k}w_k^2\,dx\le\lim_{k\to\infty}  \Lambda^{-2k} \int_{B_r(x_0)}w^2\,dx=0
\]
\end{subequations}
For every $h>0$, the procedure can be repeated for all those $\tilde h$ that
meet the requirement that both the lower bounds in \eqref{htildelarge} for $h$ hold, leading one to the conclusion that
\[
w\le 
\delta\Tail(w,x_0,r/2)  + 
\delta r^{2s}\norm f_{L^\infty(\mathscr U)}
+ C_8 \delta^{-\frac{1}{2\alpha}} r^{-\frac{N}{2\alpha}} \norm w_{L^2(B_r(x_0))}
\qquad \text{a.e.\ in $B_{r/2}(x_0)$}
\]
where $C_8$ depends only on $N$ and $s$. Since $\alpha=2s/N$, that ends the proof.
\end{proof}

\section{Functional inequalities with special singular weights}\label{techn2}

In the present section, we introduce a couple of Hardy-type inequalities.
In the following proposition, we see that Lane-Emden inequalities (see \cite[Section 3]{BFR}) are valid also in the non-local case.

\begin{proposition}\label{prop:LEineq}
Let $q\in(1,2)$ and $u\in C^\infty_0(\Omega)$. Then
\begin{equation}
\label{LEineq}
	\int_{\Omega} \frac{u^2}{w_{\Omega,s,q}^{2-q}}\,dx\le \int_{\R^{N}}\int_{\R^N} \frac{(u(x)-u(y))^2}{\abs{x-y}^{N+2s}}\,dx\,dy
\end{equation}
with the agreement that the left integrand be $0$ at all points where $w_{\Omega,s,q}=+\infty$.
\end{proposition}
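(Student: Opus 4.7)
The plan is to combine a nonlinear Picone-type inequality for the fractional Laplacian with the Euler-Lagrange equation satisfied by $w:=w_{\Omega,s,q}$, adapting to the non-local setting the strategy used in \cite{BFR} for the local case.

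\smallskip

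\emph{Step 1 (reduction to $\lambda_1(\Omega,s,q)>0$).} By \cref{wqs} and the monotonicity from \cref{CP}, if I set $w_r:=w_{\Omega\cap B_r,s,q}$ (extended by $0$), then $w_r$ increases pointwise to $w_{\Omega,s,q}$ as $r\to\infty$. Each $\Omega\cap B_r$ is bounded, so $\lambda_1(\Omega\cap B_r,s,q)>0$. Once I have the inequality in the bounded case for every $r$ large enough that $u$ is supported in $B_r$, the sequence $u^2/w_r^{2-q}$ decreases pointwise to $u^2/w_{\Omega,s,q}^{2-q}$ (with the convention $1/(+\infty)=0$), and dominated convergence yields \eqref{LEineq} in full generality.

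\smallskip

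\emph{Step 2 (choice of test function).} Assume $\lambda_1(\Omega,s,q)>0$, so that $w\in\X$ is a non-negative weak solution of $(-\Delta)^s w=w^{q-1}$ in $\Omega$. By the strong minimum principle $w>0$ a.e.\ in $\Omega$, and by \cref{linfty} $w$ is bounded. For $\epsi>0$ and $u\in C^\infty_0(\Omega)$, I use as test function
\[
\phi_\epsi\coloneqq\frac{u^2}{w+\epsi}.
\]
A direct manipulation gives
\[
\phi_\epsi(x)-\phi_\epsi(y)=\frac{u(x)^2-u(y)^2}{w(x)+\epsi}-\frac{u(y)^2\,(w(x)-w(y))}{(w(x)+\epsi)(w(y)+\epsi)}.
\]
Using $0\le w\le\norm{w}_{L^\infty(\Omega)}$ and that $u$ is bounded, one gets
\[
\abs{\phi_\epsi(x)-\phi_\epsi(y)}\le C(\epsi,\norm{u}_\infty,\norm{w}_\infty)\bigl(\abs{u(x)-u(y)}+\abs{w(x)-w(y)}\bigr),
\]
hence $\phi_\epsi$ has finite Gagliardo seminorm. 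Being bounded and compactly supported in $\Omega$, $\phi_\epsi$ lies in $\X$ (by mollification and cut-off). Verifying the admissibility of $\phi_\epsi$ is the main technical point and rests on the $L^\infty$ bound of \cref{linfty}.

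\smallskip

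\emph{Step 3 (Picone inequality and passage to the limit).} The elementary inequality
\[
(a-b)\tonda*{\frac{c^2}{a}-\frac{d^2}{b}}\le (c-d)^2\qquad(a,b>0,\ c,d\in\R)
\]
follows after expansion from the non-negativity of $\bigl(c\sqrt{b/a}-d\sqrt{a/b}\bigr)^2$. Applied pointwise with $a=w(x)+\epsi$, $b=w(y)+\epsi$, $c=u(x)$, $d=u(y)$ and integrated against $\abs{x-y}^{-(N+2s)}$, it gives
\[
\iint_{\R^{2N}}\frac{(w(x)-w(y))(\phi_\epsi(x)-\phi_\epsi(y))}{\abs{x-y}^{N+2s}}\,dx\,dy\le \iint_{\R^{2N}}\frac{(u(x)-u(y))^2}{\abs{x-y}^{N+2s}}\,dx\,dy.
\]
Plugging $\phi_\epsi$ into the weak formulation of $(-\Delta)^sw=w^{q-1}$, the left-hand side equals $\int_\Omega w^{q-1}u^2/(w+\epsi)\,dx$. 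Since $w>0$ a.e.\ and $q<2$, the integrand $w^{q-1}u^2/(w+\epsi)$ increases pointwise to $u^2/w^{2-q}$ as $\epsi\downarrow0$, so monotone convergence yields \eqref{LEineq}, concluding the proof.
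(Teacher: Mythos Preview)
Your proof is correct and follows essentially the same route as the paper's: reduce to the bounded case via the monotone approximation $w_r=w_{\Omega\cap B_r,s,q}$, test the equation for $w$ with $\varphi=u^2/(w+\varepsilon)$, apply the Picone-type inequality, and let $\varepsilon\to0$. The only cosmetic differences are that the paper cites \cite[Lemma~2.4]{BC} for the admissibility of the test function and \cite[Proposition~4.2]{BF14} for the Picone inequality (where you verify both by hand), and invokes Fatou's lemma in both limiting steps where you use monotone/dominated convergence.
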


\begin{proof}
We first prove \eqref{LEineq} in the special case of a bounded open set.
We write $w=w_{\Omega,s,q}$, and we take $\varepsilon>0$. By \cref{linfty}, $w\in \X\cap L^\infty(\Omega)$. Hence, so does $(w+\varepsilon)^{-1}$,
because $t\mapsto (t+\varepsilon)^{-1} $ is a Lipschitz function on $(0,\infty)$. Then, by \cite[Lemma 2.4]{BC} we can
plug $\varphi=u^2/(w+\varepsilon)$ into the equation for $w$ and get
\[
\int_\Omega w(x)^{q-1} \frac{u(x)^2}{w(x)+\varepsilon}\,dx = 	\int_{\R^{N}}\int_{\R^N} \frac{w(x)-w(y)
}{\abs{x-y}^{N+2s}}\tonda*{\frac{u(x)^2}{w(x)+\varepsilon}-\frac{u(y)^2}{w(y)+\varepsilon}
}\,dx\,dy
\]
for all $\varepsilon>0$. In view of \cite[Proposition 4.2]{BF14}, and recalling that $w>0$ a.e.\ in $\Omega$, by Fatou's lemma it follows that 
\[
\int_\Omega \frac{u^2}{w^{2-q}}\,dx \le \int_{\R^{N}}\int_{\R^N} \frac{(u(x)-u(y))^2}{\abs{x-y}^{N+2s}}\,dx\,dy
\]

For the general case, we take $R>0$ so large that the support of $u$ is contained in $B_r$ for all $r\ge R$. For all such radii $r$,
by the material above we have
\[
	\int_{\Omega\cap B_r} \frac{u^2}{w_r^{2-q}}\,dx \le \int_{\R^{N}}\int_{\R^N} \frac{(u(x)-u(y))^2}{\abs{x-y}^{N+2s}}\,dx\,dy
\]
where $w_r $ is the $(s,q)$--Lane-Emden density of $\Omega\cap B_r$.
In view of \cref{wqs}, by Fatou's lemma we get the conclusion passing to the limit as $r\to\infty$.
\end{proof}

The more familiar Hardy-type inequality of next proposition implies
some restriction on $\Omega$. The assumption made below is not optimal, though; for instance, a uniform exterior cone condition is also a valid assumption. More generally, for the statement to hold true it would be sufficient that no boundary point belong to the measure-theoretic interior of $\Omega$ (see \cite{CC}).

\begin{proposition}\label{Hardyneq}
Let $s\in(0,1)$ and let $\Omega\subset\R^N$ be an open bounded Lipschitz set. Then, for all $u\in C^\infty_0(\Omega)$,
\begin{equation}
\label{Hardyneq-EQ}
\int_{\Omega}\frac{u(x)^2}{\dist(x,\partial\Omega)^{2s}}\,dx\le C
\int_{\mathbb R^N}\int_{\mathbb R^N} \frac{(u(x)-u(y))^2}{\abs{x-y}^{N+2s}}\,dx\,dy
\end{equation}
for a constant $C>0$ depending only on $\Omega$.
\end{proposition}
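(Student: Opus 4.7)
The plan is to combine a localization argument near $\partial\Omega$ with a rough estimate away from it, so as to reduce \eqref{Hardyneq-EQ} to the classical fractional Hardy inequality on a half-space.

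\textbf{Step 1 (cover and partition of unity).} Since $\Omega$ is a bounded Lipschitz set, $\partial\Omega$ is compact and can be covered by finitely many balls $B_1,\dots,B_m$ such that, up to a rigid motion, $\Omega\cap B_i=\{x\in B_i : x_N>\phi_i(x')\}$ for some Lipschitz function $\phi_i\colon\R^{N-1}\to\R$. Fix also an open set $B_0\Subset\Omega$ so that $B_0,B_1,\dots,B_m$ cover $\overline{\Omega}$, and a smooth partition of unity $\{\eta_i\}_{i=0}^{m}$ subordinate to this cover. Write $u=\sum_{i=0}^{m}\eta_i u$ and estimate each piece separately.

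\textbf{Step 2 (interior piece).} On $\mathrm{supp}(\eta_0)$ the distance $\dist(\,\cdot\,,\partial\Omega)$ is bounded below by some $\delta_0>0$, so
\[
\int_\Omega\frac{(\eta_0 u)^2}{\dist(x,\partial\Omega)^{2s}}\,dx\le \delta_0^{-2s}\,\norm{u}_{L^2(\Omega)}^2.
\]
The continuity of the embedding $\X\hookrightarrow L^2(\Omega)$, which holds on bounded Lipschitz sets, controls the right-hand side by the Gagliardo seminorm in the right-hand side of \eqref{Hardyneq-EQ}.

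\textbf{Step 3 (boundary pieces).} For each $i\ge 1$, consider the bilipschitz map $\Phi_i(x',x_N)=(x',x_N-\phi_i(x'))$, which flattens $\Omega\cap B_i$ to a subset of the half-space $H=\{y_N>0\}$ and satisfies $\dist(x,\partial\Omega)\simeq (\Phi_i(x))_N$ on $B_i$ with constants depending only on $\mathrm{Lip}(\phi_i)$. Letting $v_i=(\eta_i u)\circ\Phi_i^{-1}$, extended by $0$ outside $\Phi_i(B_i)$, I apply the classical fractional half-space Hardy inequality (due to Bogdan--Dyda, Frank--Seiringer)
\[
\int_H \frac{v_i(y)^2}{y_N^{2s}}\,dy\le C(N,s)\int_{\R^N}\int_{\R^N}\frac{(v_i(y)-v_i(z))^2}{\abs{y-z}^{N+2s}}\,dy\,dz,
\]
and transport the estimate back through $\Phi_i^{-1}$. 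Because $\Phi_i$ is bilipschitz with bounded Jacobian, the Gagliardo seminorm of $v_i$ is comparable to that of $\eta_i u$, and the weight transforms correctly by the above comparison of distances.

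\textbf{Step 4 (Leibniz reassembly).} Finally I bound the seminorm of $\eta_i u$ by that of $u$ via the standard fractional Leibniz-type inequality
\[
[\eta_i u]_{\mathcal{D}^{s,2}}^2 \lesssim \norm{\eta_i}_{L^\infty}^2\,[u]_{\mathcal{D}^{s,2}}^2 + \norm{\nabla\eta_i}_{L^\infty}^2\,\norm{u}_{L^2(\Omega)}^2,
\]
absorbing the $L^2$ remainders as in Step~2. Summing over $i=0,1,\dots,m$ yields \eqref{Hardyneq-EQ} with a constant depending only on $\Omega$.

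\textbf{Main obstacle.} The delicate point is Step 3: the Gagliardo seminorm is genuinely non-local, hence the bilipschitz change of variables is not as immediate as in the local Sobolev case. One must check that the double integral over $\R^{2N}$ transforms with bounded distortion (both on short-range and long-range pairs), and that the effective boundary-distance weight inherited from $H$ is equivalent to $\dist(x,\partial\Omega)^{-2s}$ on $B_i$. Once these equivalences are secured, the argument reduces to well-known tools.
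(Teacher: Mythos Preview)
Your approach is correct but substantially different from the paper's. The paper gives a direct, self-contained argument based on the uniform exterior cone condition: for each $x\in\Omega$ one picks a nearest boundary point $\xi_x$, places an exterior cone $K_{\xi_x}\cap B_\ell(\xi_x)\subset\R^N\setminus\Omega$ at it, and shows by an explicit computation that
\[
\int_{\R^N\setminus\Omega}\frac{dy}{\abs{x-y}^{N+2s}}\ge\int_{K_{\xi_x}\cap B_\ell(\xi_x)}\frac{dy}{\abs{x-y}^{N+2s}}\ge c\,\dist(x,\partial\Omega)^{-2s}.
\]
The conclusion then follows in one line from the identity
\[
\iint_{\R^{2N}}\frac{(u(x)-u(y))^2}{\abs{x-y}^{N+2s}}\,dx\,dy=\iint_{\Omega\times\Omega}\frac{(u(x)-u(y))^2}{\abs{x-y}^{N+2s}}\,dx\,dy+2\int_\Omega u(x)^2\int_{\R^N\setminus\Omega}\frac{dy}{\abs{x-y}^{N+2s}}\,dx,
\]
valid for $u\in C_0^\infty(\Omega)$. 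No partition of unity, flattening, or half-space Hardy inequality is invoked; the ``killing'' cross term already present in the extended Gagliardo energy does all the work. Your route is the standard localization-and-flattening strategy: it imports the half-space inequality as a black box and relies on more machinery (bilipschitz invariance of the seminorm, a fractional Leibniz rule, Poincar\'e), but it is equally valid and perhaps more familiar from the local theory. The paper's argument buys simplicity and a transparent extension to domains satisfying only an exterior cone---or even an exterior density---condition, without ever passing through the half-space case; your argument, in exchange, makes the dependence on the Lipschitz chart structure explicit.
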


Before proving \cref{Hardyneq}, we make a brief comment on
\eqref{Hardyneq-EQ}. When it comes to fractional Hardy inequalities, there are a number of variants of the same statement. 
A stronger one just involves the Sobolev-Slobodeckij seminorm $[u]_{H^s(\Omega)}$ in the right hand side (instead of taking integrals on the whole of $\mathbb R^{N}$), implying various restrictions both on $\Omega$ and on $s$: for a more detailed account on the topic, we refer to \cite{CC,D,DV0,DV,Sk}. 
Here, incidentally, in view of \cref{rmk-analog} we may point out the following.

\begin{corollary}
If $s\in(0,1)$ and $2s\neq N$, then, under the assumptions
of \cref{Hardyneq},
\begin{equation*}
\int_{\Omega}\frac{u(x)^2}{\dist(x,\partial\Omega)^{2s}}\,dx\le C
\tonda*{
\int_{\Omega}\int_{\Omega} \frac{(u(x)-u(y))^2}{\abs{x-y}^{N+2s}}\,dx\,dy
+\int_\Omega u^2\,dx}
\end{equation*}
for all $u\in C^\infty_0(\Omega)$.
\end{corollary}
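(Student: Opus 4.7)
The plan is to combine Proposition~\ref{Hardyneq} with the norm equivalence stated in Remark~\ref{rmk-analog}. Since $s\in(0,1)$ and $N\ge1$, the hypothesis $2s\ne N$ excludes only the borderline configuration $N=1$, $s=\tfrac12$; in every remaining case $\Omega$ is a bounded Lipschitz set with $s\ne\tfrac12$, which is precisely the setting in which the ``censored'' norm $\norm{\cdot}_{L^2(\Omega)}+[\cdot]_{H^s(\Omega)}$ is comparable to the full $\mathcal{D}^{s,2}_0(\Omega)$-norm, as recorded there (cf.\ \cite[Appendix B]{BLP}).

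Concretely, I would proceed in two steps. First, applying Proposition~\ref{Hardyneq} to $u\in C^\infty_0(\Omega)$ yields
\[
\int_\Omega\frac{u(x)^2}{\dist(x,\partial\Omega)^{2s}}\,dx\le C\iint_{\R^N\times\R^N}\frac{(u(x)-u(y))^2}{\abs{x-y}^{N+2s}}\,dx\,dy,
\]
with $C=C(\Omega)$. Second, by the norm equivalence recalled above, the right-hand side is bounded by $C'([u]_{H^s(\Omega)}^2+\norm{u}_{L^2(\Omega)}^2)$ for a constant $C'=C'(\Omega,N,s)$. Chaining the two estimates delivers the inequality.

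The main substance is concentrated in the second step, i.e., the non-trivial direction of the norm equivalence. The natural route goes through the splitting
\[
\iint_{\R^N\times\R^N}\frac{(u(x)-u(y))^2}{\abs{x-y}^{N+2s}}\,dx\,dy=[u]_{H^s(\Omega)}^2+2\int_\Omega u(x)^2\left(\int_{\R^N\setminus\Omega}\frac{dy}{\abs{x-y}^{N+2s}}\right)dx,
\]
which holds because $u$ vanishes outside $\Omega$. The pointwise estimate $\int_{\R^N\setminus\Omega}\abs{x-y}^{-N-2s}\,dy\le c\,\dist(x,\partial\Omega)^{-2s}$ is true for any open set, so combining it naively with Proposition~\ref{Hardyneq} only produces a circular bound; the actual gain requires a Stein-type extension argument available for bounded Lipschitz domains, which directly controls the full double integral in terms of the censored norm plus $\norm{u}_{L^2(\Omega)}$. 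The exclusion $2s\ne N$ is essential: in the borderline $N=1$, $s=\tfrac12$ the zero-extension from $H^{1/2}(\Omega)$ to $H^{1/2}(\R)$ fails to be continuous and the estimate genuinely breaks down.
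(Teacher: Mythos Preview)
Your proposal is correct and matches the paper's approach exactly. The paper does not give a separate proof of the corollary: it simply prefaces the statement with ``in view of \cref{rmk-analog}'' and treats it as an immediate consequence of combining \cref{Hardyneq} with the norm equivalence recorded there, which is precisely the two-step argument you outline; your additional commentary on the splitting, the potential circularity, and the need for an extension argument (and the role of the hypothesis $2s\neq N$) is accurate but goes beyond what the paper spells out, since the equivalence itself is delegated to \cite[Appendix~B]{BLP}.
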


\begin{proof}[Proof of \cref{Hardyneq}]
By assumption, $\Omega$ satisfies the uniform exterior cone condition, i.e., that there exists $\ell>0$ and a cone $K$, with given aperture, such that every boundary point $\xi$ is the vertex 
of a cone $K_\xi$ isometric to $K$ that satisfies $K_\xi\cap B_\ell(\xi)\subset \mathbb R^N\setminus\Omega$.

For ease of notation, we write $\delta(x)\coloneqq\dist(x,\partial\Omega)$.
For all $x\in\Omega$ with $\delta(x)\ge \ell$, we can pick $\xi_x\in\partial\Omega$ with minimum distance to $x$ and we have
$\abs{x-y}\le \abs{\xi_x-y} +\delta(x)\le 2\delta(x)$ for all $y\in K_{\xi_x}\cap B_\ell(\xi_x)$,
whence it follows that
\[
	\int_{K_{\xi_x}\cap B_\ell(\xi_x)} \frac{dy}{\abs{x-y}^{N+2s}}
	\ge (2\delta(x))^{-(N+2s)}
	\abs{K_{\xi_x}\cap B_\ell(\xi_x)}
    	\ge \frac{\theta \ell^N\delta(x)^{-2s}}{2^{N+2s}D^N N}
\]
where $\theta = \mathscr H^{N-1}(K\cap \partial B_1(0))$ and $D$ is the diameter of $\Omega$.

The inequality 
$\abs{x-y}\le \abs{\xi_x-y} +\delta(x)$ holds also for all points $x\in\Omega$ with $\delta(x)\le \ell$, and we infer that
\[
\int_{K_{\xi_x}\cap B_\ell(\xi_x)} \frac{dy}{\abs{x-y}^{N+2s}}\ge 
\int_0^{\delta(x)}
\frac{\theta \rho^{N-1}\,d\rho}{(\rho+\delta)^{N+2s}} = 
\frac{\theta}{N\delta(x)^{2s}}\int_0^1\frac{dt}{(1+t^{1/N})^{N+2s}}
\ge \frac{\theta\delta(x)^{-2s}}{2^{N+2s}N}
\]

Since for all $x\in\Omega$ we have 
$K_{\xi_x}\cap B_\ell(\xi_x)\subset \mathbb R^N\setminus\Omega$, 
it follows that
\[
	\int_{\mathbb R^N\setminus\Omega} \frac{dy}{\abs{x-y}^{N+2s}}
	\ge \frac{\theta\delta(x)^{-2s}}{2^{N+2s}N}\tonda*{\frac{\ell^N}{D^N}\wedge1}
\]
That gives the desired conclusion, because
for all $u\in C_0^\infty(\Omega)$ we have
\[
\int_{\mathbb R^N}\int_{\mathbb R^N}\frac{(u(x)-u(y))^2}{\abs{x-y}^{N+2s}}
=\int_\Omega\int_\Omega
\frac{(u(x)-u(y))^2}{\abs{x-y}^{N+2s}}
+2 \int_{\Omega} u(x)^2\int_{\mathbb R^N\setminus \Omega} \frac{dy}{\abs{x-y}^{N+2s}} \qedhere
\]
\end{proof}

\begin{remark}
For the use we shall make of \cref{Hardyneq}, we don't need 
to pay much attention to the explicit value of the constant $C>0$. For sure, the proof presented implies a very rough estimate of the optimal (unknown) constant.
\end{remark}

\begin{remark}
By density, the inequality holds for all functions that belong to $\mathcal{D}_0^{s,2}(\Omega)$. Given $p\in(1,\infty)$, a similar inequality, with suitable adjustments to the exponents, is valid
for functions in the homogeneous fractional Sobolev space
$\mathcal{D}_0^{s,p}(\Omega)$ defined as the completion of $C^\infty_0(\Omega)$ with respect to
\[
	\tonda*{\int_{\mathbb R^N}\int_{\mathbb R^N} \frac{|u(x)-u(y)|^p}{\abs{x-y}^{N+sp}}\,dx\,dy}^\frac1p
\]
and this can be seen by minor changes in the proof presented here. This variant was considered, for example, in \cite{BC}, where the authors provide a constant that works on convex open sets, with stable asymptotic
behaviour as $s\nearrow1$.
\end{remark}

\section{Universal bounds for Lane-Emden densities of unbounded open sets}

The following is the non-local counterpart of \cite[Proposition 4.3]{BFR}.

\begin{proposition}\label{linftty2}
Let $q\in(1,2)$ and assume that $\lambda_1(\Omega,s,2)>0$.
Then, $w_{\Omega,s,q}\in L^\infty(\Omega)$ and
there exists a constant $C$, depending only on $N$, $s$ and $q$, such that 
\begin{equation}
\label{estw}
 \norm{w_{\Omega,s,q}}_{L^\infty(\Omega)}^{2-q}\le C\lambda_1(\Omega,s,2)^{-1}
\end{equation}
Conversely, for all $q\in(1,2)$, if $w_{\Omega,s,q}\in L^\infty(\Omega)$, then  $\lambda_1(\Omega,s,2)\ge \norm{w_{\Omega,s,q}}_{L^\infty(\Omega)}^{q-2} $.
\end{proposition}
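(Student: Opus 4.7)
The statement splits into two halves: the direct implication that positivity of $\lambda_1(\Omega,s,2)$ forces $w_{\Omega,s,q}$ to be bounded with a quantitative estimate, and the converse inequality. I address these in opposite order.

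\textbf{Converse.} Assume $w \coloneqq w_{\Omega,s,q} \in L^\infty(\Omega)$ and let $M \coloneqq \norm{w}_{L^\infty(\Omega)}$. Since $2-q>0$, one has $w^{-(2-q)} \ge M^{q-2}$ pointwise on $\{w>0\}$, so plugging this into the Lane-Emden inequality \eqref{LEineq} yields, for every $u \in C_0^\infty(\Omega)$,
\[
M^{q-2}\int_\Omega u^2\,dx \le \int_\Omega \frac{u^2}{w^{2-q}}\,dx \le \int_{\R^N}\int_{\R^N}\frac{(u(x)-u(y))^2}{\abs{x-y}^{N+2s}}\,dx\,dy.
\]
Minimising over $L^2(\Omega)$-normalised $u$ gives $M^{q-2}\le\lambda_1(\Omega,s,2)$, as required.

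\textbf{Direct implication.} The plan is to re-run the Moser iteration of \cref{linfty}, but driven by the Poincaré inequality $\lambda_1(\Omega,s,2)\int v^2\,dx \le \iint (v(x)-v(y))^2/\abs{x-y}^{N+2s}\,dx\,dy$ in place of the Sobolev inequality; this keeps the constants free of $|\Omega|$. For $\beta\ge1$ and $M'>0$, combining \cite[Lemma A.2]{BP} with the choice of test function $\varphi=(w\wedge M')^\beta$ in the weak formulation of $\slap w=w^{q-1}$ (admissibility being granted by a chain-rule argument as in the proof of \cref{prop:LEineq}) gives
\[
\frac{2\beta}{\beta+1}\int_{\R^N}\int_{\R^N}\frac{\tonda*{(w\wedge M')^{(\beta+1)/2}(x)-(w\wedge M')^{(\beta+1)/2}(y)}^2}{\abs{x-y}^{N+2s}}\,dx\,dy \le \int_\Omega w^{q-1}(w\wedge M')^\beta\,dx.
\]
Bounding the left-hand side below via Poincaré applied to $v=(w\wedge M')^{(\beta+1)/2}\in\X$ and the right-hand side above by $\int w^{q+\beta-1}\,dx$, then letting $M'\to+\infty$ by monotone convergence whenever the latter integral is finite, produces
\[
\lambda_1(\Omega,s,2)\int_\Omega w^{\beta+1}\,dx \le \frac{\beta+1}{2\beta}\int_\Omega w^{q+\beta-1}\,dx.
\]

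Iterating this along the arithmetic sequence $p_0=q$, $p_{k+1}=p_k+(2-q)$ with $\beta=\beta_k\coloneqq p_k-q+1\ge1$ (so that $\beta_k+1=p_{k+1}$ and $q+\beta_k-1=p_k$), with base case $\int w^q\,dx=\norm{w}_{\X}^2<+\infty$ coming from the energy identity for $w$, one obtains inductively both $w\in L^{p_k}(\Omega)$ (which retroactively justifies the $M'\to+\infty$ step) and the quantitative bound
\[
\int_\Omega w^{p_k}\,dx \le \lambda_1(\Omega,s,2)^{-k}\prod_{j=1}^k \frac{p_j}{2(p_j-1)}\int_\Omega w^q\,dx.
\]
Since $p_j\sim j(2-q)$, one easily checks $\prod_{j=1}^k p_j/(2(p_j-1))\le c(q)\,k^{1/(2-q)}\,2^{-k}$, and extracting $p_k$-th roots and letting $k\to+\infty$ collapses all but the two relevant factors, yielding
\[
\norm{w}_{L^\infty(\Omega)}=\lim_{k\to\infty}\norm{w}_{L^{p_k}(\Omega)} \le \tonda*{2\lambda_1(\Omega,s,2)}^{-1/(2-q)},
\]
which is \eqref{estw} with explicit $C=\tfrac12$.

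The main obstacle is the rigorous justification of the $M'\to+\infty$ passage, which demands a priori integrability $w\in L^{q+\beta-1}(\Omega)$ at every stage; this is handled by the inductive scheme along $p_k$ run in parallel with the pointwise bound. The admissibility of $(w\wedge M')^{(\beta+1)/2}$ as an element of $\X$ (so as to be a valid competitor for the Poincaré inequality) follows from the standard chain rule for Lipschitz compositions with non-negative elements of $\X$.
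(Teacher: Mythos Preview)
Your treatment of the converse is identical to the paper's. For the direct implication, your Moser-type iteration driven by the Poincar\'e inequality is a genuinely different route from the paper's and, once a gap is closed, both more elementary and sharper. The paper instead reduces by scaling to the regime $\norm{w}_{L^\infty}^{2-q}\ge C_2(N,s)$, approximates $\Omega$ by smooth bounded sets, proves that the zero-extension of $w$ is a global subsolution of \eqref{LE} in $\R^N$, and then combines a localised Caccioppoli estimate with the interior $L^\infty$ bound of \cref{lm:subsoLE} to control $\lambda_1(\Omega,s,2)$ by $w(0)=\norm{w}_{L^\infty}$. Your argument bypasses all of this machinery and even yields the explicit constant $C=\tfrac12$.

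The gap is the following. You repeatedly use that $w\in\X$ (to test with $(w\wedge M')^\beta$, to apply Poincar\'e to $(w\wedge M')^{(\beta+1)/2}$, and to invoke the energy identity $\int_\Omega w^q=\norm{w}_\X^2$), but the hypothesis $\lambda_1(\Omega,s,2)>0$ does \emph{not} guarantee this: as noted in the remark following \cref{linftyw}, one may have $\lambda_1(\Omega,s,2)>0$ while $\lambda_1(\Omega,s,q)=0$, in which case $w_{\Omega,s,q}$ is only defined as the pointwise limit \eqref{defw3} and need not belong to $\X$ nor satisfy any energy identity. The fix is straightforward and is precisely what the paper does: run your iteration on $w_r\coloneqq w_{\Omega\cap B_r,s,q}$ (for which membership in $\mathcal{D}_0^{s,2}(\Omega\cap B_r)$ is granted), use $\lambda_1(\Omega\cap B_r,s,2)\ge\lambda_1(\Omega,s,2)$, and then let $r\to\infty$ via monotone convergence and lower semicontinuity of the $L^\infty$-norm. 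With this added, your proof is complete.
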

\begin{proof}
Let us write $w=w_{\Omega,s,q}$. The last statement is a consequence of  \cref{prop:LEineq}. Then, we assume that
$\lambda_1(\Omega,s,2)>0$
and we prove the following fact: there exists a constant $C_2(N,s)$, that
only depends on $N$ and $s$, such that
\begin{equation}
\label{retro0}
	\norm w_{L^\infty(\Omega)}^{2-q}\lambda_1(\Omega,s,2)\le C_1(N,s,q)
\end{equation}
 holds with a suitable constant $C_1(N,s,q)$, depending only on $N$, $s$ and $q$,
provided that
\begin{equation}\label{retro2}
	\norm w_{L^\infty(\Omega)}^{2-q}\ge C_2(N,s)
\end{equation}
That fact would imply 
\[
 \norm{w_{\Omega,s,q}}_{L^\infty(\Omega)}^{2-q}\lambda_1(\Omega,s,2)
\le \max\{C_2\lambda_1(\Omega,s,2), C_1\}
 \]
whence we would infer \eqref{estw} by a scaling argument, because for all $t>0$
we have
\begin{align*}
\norm{w_{t\Omega,s,q}}_{L^\infty(t\Omega)}^{2-q}
&=t^{2s} \norm{w_{\Omega,s,q}}_{L^\infty(\Omega)}^{2-q}\\
\lambda_1(t\Omega,s,2)
&=t^{-2s}\lambda_1(\Omega,s,2)
\end{align*}

In order to prove that \eqref{retro2} implies \eqref{retro0}, as desired,
for appropriate choices of constants,
we follow the lines of the proof of \cite[Theorem 9]{BB}. 
Since the $L^\infty$-norm
is lower semicontinuous with respect to the pointwise (monotone)
convergence and the first eigenvalue $\lambda_1(\cdot,s,2)$
is monotone non-increasing with respect to set inclusion, in order to prove the claim we may assume $\Omega$ to be smooth and bounded, up to an approximation argument. So, by arguing under this assumption, in view of \cref{linfty} we will assume
$w$ to belong to $ L^\infty(\Omega)$ and to achieve its maximum
at an interior point, that we may consider to be the origin in $\R^N$
up to an unessential translation.

We now identify $w$ with the function that agrees with $w$ in $\Omega$ and equals zero everywhere else and we claim that
$w$ is a weak subsolution of the fractional Lane-Emden equation \eqref{LE} in $\R^N$. To see this\footnote{We owe the approximation trick used in the proof of this claim to a gentle advice by Lorenzo Brasco.}, we fix a non-negative function $\eta\in C^\infty_0\tonda*{\mathbb R^N}$ and, for every $\varepsilon>0$, we take a monotone non-decreasing Lipschitz continuous function
$H_\varepsilon\colon\mathbb R\to\mathbb R$, with $H_\varepsilon(u)=0$ for all $u\le0$ and $H_\varepsilon(u)=1$ for all $u\ge\varepsilon$. Then
 \begin{equation}\label{ssol:1}
  \int_{\R^N}\int_{\R^N} \frac{w(x)-w(y)}{\abs{x-y}^{N+2s}}[ H_\varepsilon(w(x))\eta(x)-H_\varepsilon(w(y))\eta(y)]\,dx\,dy
 = \int_\Omega w^{q-1}H_\varepsilon(w)\eta\,dx
 \end{equation}
 because of the weak equation for $w$ with $H_\varepsilon(w)\eta$ as a test function.
To handle the left hand side of \eqref{ssol:1}, 
we write the identity
\(
 2(a\xi-b\zeta) 
 = (a+b)(\xi-\zeta)
 +(a-b)(\xi+\zeta) 
\)
with $a=\eta(x)$, $b=\eta(y)$, $\xi=H_\varepsilon(w(x))$ and $\zeta=H_\varepsilon(w(y))$. After multiplying
the result by $w(x)-w(y)$ and integrating
against the singular kernel on $\Omega\times\Omega$, we see that
\begin{equation}
\label{ssol:4}
\begin{split}
2 & \int_\Omega\int_\Omega
\frac{w(x)-w(y)}{\abs{x-y}^{N+2s}}
[H_\varepsilon(w(x))\eta(x)-H_\varepsilon(w(y))\eta(y)]\,dx\,dy\\
& = 
\int_\Omega\int_\Omega
\frac{w(x)-w(y)}{\abs{x-y}^{N+2s}}
(\eta(x)+\eta(y))
[H_\varepsilon(w(x))-H_\varepsilon(w(y))]\,dx\,dy\\
& + 
\int_\Omega\int_\Omega
\frac{w(x)-w(y)}{\abs{x-y}^{N+2s}}
(\eta(x)-\eta(y))
[H_\varepsilon(w(x))+H_\varepsilon(w(y))]\,dx\,dy
\end{split}
\end{equation}
Notice that the first integral in the right hand side of \eqref{ssol:4} is non-negative,
due to the monotonicity of the function $H_\varepsilon$. Thus
\begin{equation}
\label{ssol:5}
\begin{split}
&\int_{\R^N}\int_{\R^N}
\frac{w(x)-w(y)}{\abs{x-y}^{N+2s}}
[H_\varepsilon(w(x))\eta(x)-H_\varepsilon(w(y))\eta(y)]\,dx\,dy
\\ & \ge
\frac12\int_\Omega\int_\Omega
\frac{w(x)-w(y)}{\abs{x-y}^{N+2s}}
(\eta(x)-\eta(y))
[H_\varepsilon(w(x))+H_\varepsilon(w(y))]\,dx\,dy\\
& +
2\int_\Omega\int_{\R^N\setminus\Omega}
\frac{w(x)}{\abs{x-y}^{N+2s}}
[H_\varepsilon(w(x))\eta(x)-H_\varepsilon(w(y))\eta(y)]\,dx\,dy
\end{split}
\end{equation}
By dominated convergence theorem, the limit as $\varepsilon\to0^+$
in \eqref{ssol:1} and \eqref{ssol:5} gives
\[
\int_\Omega\int_\Omega
\frac{w(x)-w(y)}{\abs{x-y}^{N+2s}}
(\eta(x)-\eta(y))\,dx\,dy
+2\int_\Omega\int_{\R^N\setminus\Omega}
\frac{w(x)\eta(x)}{\abs{x-y}^{N+2s}}\,dx\,dy
\le \int_\Omega w^{q-1}\eta\,dx
\]
and that proves the claim.

We let $r$ be a positive radius, that will be chosen later, and we take a cut-off
$\zeta\in C^\infty_0(\Omega)$ from the ball $B_{r/2}$ to $B_r$, with $\abs{\nabla\zeta}\le \frac2r$. Since $w$ is a weak subsolution of \eqref{LE},
the localised Caccioppoli estimate of \cite[Proposition 3.5]{BP}, with $F=w^{q-1}$, 
$p=2$, $\beta=1$, $\delta=0$, $L=1$ and $\Omega'=B_{r}$, gives
\begin{equation}
\label{estw1}
	\int_{B_r}\int_{B_r} \frac{(w(x)\zeta(x)-w(y)\zeta(y))^2}{\abs{x-y}^{N+2s}}\,dx\,dy\le C_3(N,s) \tonda*{w(0)^q r^N +w(0)^2r^{N-2s}}
\end{equation}
where $C_3(N,s)>0$ depends only on $N$ and $s$. Moreover, by the fact that $w\in L^\infty(\Omega)$,
\begin{equation}
\label{estw2}
	\int_{B_r}\int_{\R^N\setminus B_r} \frac{(w(x)\zeta(x)-w(y)\zeta(y))^2}{\abs{x-y}^{N+2s}}\,dx\,dy\le C_4(N,s) w(0)^2r^{N-2s}
\end{equation}
where $C_4(N,s)>0$ depends only on $N$ and $s$. Also, by \cref{lm:subsoLE} we have
\[
	\int_{B_r} w^2\,dx \ge C_5(N,s,q)r^N\tonda*{w(0)
	-\delta\Tail(w,0,r/2) -\delta r^{2s} w(0)}^2
\]
where the constant $C_5(N,s,q)$ depends only on $N$, $s$ and $q$
and $\delta\in(0,1]$ is a parameter that we can take as small as we wish.
By combining the latter with \eqref{estw1} and \eqref{estw2}, for
$\delta$ smaller than an appropriate $\delta_0(N,s)\in(0,1]$, we obtain
\begin{equation}
\label{last}
\lambda_1(\Omega,s,2) \le C_6(N,s,q) \tonda*{w(0)/2-
\delta r^{2s} w(0)^{q-2}}^{-2}\tonda*{w(0)^q + w(0)^2r^{-2s}}
\end{equation}
where we set $C_6 = 2(C_3+C_4) C_5^{-1}$ and we used 
the fact that the function $w\zeta$ is an admissible competitor for the infimum that
defines the constant $\lambda_1(\Omega,s,2)$. Then, we take
$\delta\le 2^{-q}\wedge \delta_0(N,s)$. Hence, with the choice
\[
	r = \tonda*{\tfrac12 w(0)}^\frac{2-q}{2s}
\]
we have
$w(0)-
\delta r^{2s} w(0)^{q-2}\ge w(0)/4$, and \eqref{last} gives \eqref{retro0} with $C_1 = 16\tonda*{1+2^{2-q}}C_6$.
\end{proof}

Under the stronger assumption that $\lambda_1(\Omega,s,q)>0$, we have the following estimate.

\begin{proposition}\label{linftyw}
Let $q\in(1,2)$ and $\lambda_1(\Omega,s,q)>0$. Then, $w_{\Omega,s,q}\in L^\infty(\Omega)$ and
\[
	\norm{w_{\Omega,s,q}}_{L^\infty(\Omega)}\le \mathcal{C} \lambda_1(\Omega,s,q)^{-\gamma}
\]
where the constant $\mathcal{C}>0$ and the exponent $\gamma>0$ depends only on $N$, $s$ and $q$.
\end{proposition}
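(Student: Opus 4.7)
The plan is to recognise $w_{\Omega,s,q}$ as a first $q$-semilinear $s$-eigenfunction with a specific $L^q$-normalisation, and then to transfer the Moser-type $L^\infty$-bound of \cref{linfty} into a bound expressed purely in terms of $\lambda_1(\Omega,s,q)$.

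First, since $q\in(1,2)$ and $\lambda_1(\Omega,s,q)>0$, by \cite[Theorem~1.3]{F19} the embedding $\X\hookrightarrow L^q(\Omega)$ is compact; hence \cref{linfty} applies to every $q$-semilinear $s$-eigenfunction. Writing $w=w_{\Omega,s,q}$, by \cref{uniw} $w$ is a non-negative first eigenfunction. Since $w$ satisfies $\slap w=w^{q-1}$, casting the equation into the normalised form \eqref{eq} with $\lambda=\lambda_1(\Omega,s,q)$ forces $\lambda_1(\Omega,s,q)\norm{w}_{L^q(\Omega)}^{2-q}=1$, which yields the a priori identity
\[
\norm{w}_{L^q(\Omega)}=\lambda_1(\Omega,s,q)^{-\frac{1}{2-q}}.
\]

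Next I would apply \cref{linfty} to $u=w$ with $\lambda=\lambda_1(\Omega,s,q)$. In the case $2^*_s<+\infty$ this gives
\[
\norm{w}_{L^\infty(\Omega)}\le\mathcal{C}_1\,\lambda_1(\Omega,s,q)^{\frac{2^*_s}{2(2^*_s-q)}}\norm{w}_{L^q(\Omega)};
\]
plugging the normalisation above into this estimate leaves an overall exponent $\tfrac{2^*_s}{2(2^*_s-q)}-\tfrac{1}{2-q}=-\tfrac{q(2^*_s-2)}{2(2^*_s-q)(2-q)}$ of $\lambda_1(\Omega,s,q)$, which is strictly negative since $q\in(1,2)$ and $2^*_s>2$. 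In the case $2^*_s=+\infty$ (which forces $N=1$, $s\ge\tfrac12$) the analogous manipulation based on $\norm{w}_{L^\infty(\Omega)}\le C\lambda_1(\Omega,s,q)\norm{w}_{L^q(\Omega)}$ gives the exponent $-(q-1)/(2-q)<0$. Setting $\gamma$ equal to the absolute value of the resulting exponent, which depends only on $N$, $s$ and $q$, finishes the proof.

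The argument is essentially algebraic once the $L^q$-normalisation of $w$ has been identified. The only subtlety worth checking is the borderline case $2^*_s=+\infty$: in the statement of \cref{linfty} the constant $\mathcal{C}_2$ is allowed to depend on $|\Omega|$, but an inspection of its proof shows that such a dependence enters only through the interpolation step used to handle exponents $q>2$; in the subhomogeneous range $q\in(1,2)$ the constant can be taken to depend solely on $N$ and $s$. This is the only point I would verify carefully; no other real obstacle is expected.
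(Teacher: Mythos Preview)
Your proof is correct and follows the same route as the paper's own argument: identify $w_{\Omega,s,q}$ as the first $q$-semilinear $s$-eigenfunction with $L^q$-norm $\lambda_1(\Omega,s,q)^{1/(q-2)}$ and then invoke \cref{linfty}. Your discussion of the $|\Omega|$-dependence of the constant in the borderline case $2^\ast_s=+\infty$ is a welcome additional check that the paper's one-line proof glosses over.
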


\begin{proof}
We note that $w_{\Omega,s,q}$ is the first $q$-semilinear $s$-eigenfunction
with $L^q(\Omega)$-norm $\lambda_1(\Omega,s,q)^{\frac{1}{q-2}}$. Then, the estimate follows at once by \cref{linfty}.
\end{proof}

\begin{remark}
We notice that \cref{linftyw} can also be seen as a particular case of the general estimate \eqref{estw} of \cref{linftty2}.
Indeed, the positivity of the greatest lower bound $\lambda_1(\Omega,s,2)$ 
for the spectrum of the fractional (linear) $s$-Laplacian is, by definition,
equivalent to the continuity of the embedding $\X\hookrightarrow L^2(\Omega)$. Domains with this property
are not necessarily bounded, nor are they required to have finite measure;
also, an open set $\Omega$ may support a Sobolev-Poincaré inequality that makes
$\lambda_1(\Omega,s,2)$ strictly positive even if $\lambda_1(\Omega,s,q)=0$ for all $q\in(1,2)$
(examples are provided by domains of the form $\omega\times(-M,M)$, with $M>0$ and $\omega$ bounded in $\R^{N-1}$).
Conversely, given any $q\in(1,2)$, the fact that $\lambda_1(\Omega,s,q)>0$ implies that
$\lambda_1(\Omega,s,2)>0$, too; in fact, it implies that the embedding $\X\hookrightarrow L^2(\Omega)$ is compact, by interpolation (see \cite[Lemma~2.3]{F19}).
\end{remark}

\section{Local in \texorpdfstring{$L^1$}{L1} uniqueness for fractional Lane-Emden positive solutions}\label{sec:isolation}
The following proposition is the non-local counterpart of \cite[Proposition 4.1]{BDF}.

\begin{proposition}\label{abstractprop}
Let $q\in(1,2)$ and assume that the weighted space
\begin{equation}
\label{wesp}
	L^2(\Omega,w_{\Omega,s,q}^{q-2}) =
	\graffa*{u\in L^1_{\rm loc}(\Omega):\int_\Omega w_{\Omega,s,q}^{q-2}u^2\,dx<+\infty}
\end{equation}
contains $\mathcal{D}_0^{s,2}(\Omega)$ with compact embedding. Then, every
critical point of
\[
	\frac12\int_{\mathbb R^N}\int_{\mathbb R^N} \frac{(u(x)-u(y))^2}{|x-y|^{N+2s}}\,dx\,dy
	-\frac1q \int_\Omega{\abs u^q\,dx}
\]
must satisfy $\norm{u-w_{\Omega,s,q}}_{L^1(\Omega)}\ge\delta$, where
$\delta>0$ depends only on $s$, $q$, $\Omega$ and $N$.
\end{proposition}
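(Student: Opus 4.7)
My plan is to argue by contradiction, mimicking the strategy of \cite[Proposition 4.1]{BDF}. Suppose there is a sequence $u_n$ of critical points of the free energy \eqref{free-energy}, distinct from $w:=w_{\Omega,s,q}$, with $\|u_n-w\|_{L^1(\Omega)}\to 0$. The first move is to subtract the Euler–Lagrange equations for $u_n$ and $w$, test the resulting weak equation for $v_n:=u_n-w$ against $v_n$ itself, and use the mean value representation
\begin{equation*}
|u_n|^{q-2}u_n - w^{q-1} = g_n\, v_n, \qquad g_n := (q-1)\int_0^1 |w+tv_n|^{q-2}\,dt,
\end{equation*}
to obtain $[v_n]_{s,2}^2 = \int_\Omega g_n v_n^2\,dx$. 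Note that $g_n\to(q-1)w^{q-2}$ pointwise a.e.\ on $\{w>0\}$. Renormalising $\psi_n := v_n/[v_n]_{s,2}$, the identity becomes
\begin{equation*}
[\psi_n]_{s,2}=1, \qquad \int_\Omega g_n\psi_n^2\,dx = 1.
\end{equation*}

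By the standing hypothesis that $\X$ embeds compactly into $L^2(\Omega,w^{q-2})$, a subsequence of $(\psi_n)$ converges, weakly in $\X$ and strongly in $L^2(\Omega,w^{q-2})$, to some limit $\psi$. The key step is then to pass to the limit in the normalisation identity to get
\begin{equation*}
1 = (q-1)\int_\Omega w^{q-2}\psi^2\,dx.
\end{equation*}
Once this is achieved, the Lane-Emden inequality \eqref{LEineq} of \cref{prop:LEineq} applied to $\psi$, combined with weak lower semicontinuity of the Gagliardo seminorm, yields
\begin{equation*}
\int_\Omega\frac{\psi^2}{w^{2-q}}\,dx \le [\psi]_{s,2}^2 \le \liminf_{n\to\infty}[\psi_n]_{s,2}^2 = 1,
\end{equation*}
and substitution into the limit identity produces $1\le q-1<1$, the desired contradiction, which also forces $\psi\ne 0$ to be consistent with the normalisation.

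The principal obstacle is the limit passage in the second step, since the weight $g_n$ involves the negative power $|w+tv_n|^{q-2}$, which may blow up wherever the segment from $w$ to $u_n$ passes near zero. I plan to split $\Omega$ according to whether $|v_n|\le \tfrac12 w$ or not. On the good region $g_n$ is uniformly dominated by $C\, w^{q-2}$, so the strong convergence $\psi_n\to\psi$ in $L^2(\Omega,w^{q-2})$ combined with the pointwise convergence $g_n\to (q-1)w^{q-2}$ gives the desired limit there by a dominated convergence argument. On the bad region, one uses the concavity bound $g_n\le (q-1)\min(u_n,w)^{q-2}$ (valid where $u_n,w>0$) together with the a priori $L^\infty$ bound on $u_n$ coming from \cref{linftyw} and the fact that $\{|v_n|>\tfrac12 w\}$ has vanishing measure (with respect to a suitable absolutely continuous measure) along the sequence, to show the contribution is negligible. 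Handling the small part of $\Omega$ where $u_n\le 0$ requires an additional argument using the $L^1$-closeness to the strictly positive $w$ and the strong minimum principle applied after bootstrapping $L^1$-convergence to a better topology via \cref{linfty}; this is where the proof will be most delicate.
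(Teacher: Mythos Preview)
Your overall strategy---argue by contradiction, linearise the difference of the two Euler--Lagrange equations, renormalise, extract a weak limit via the compact weighted embedding, and contradict the Lane--Emden inequality \eqref{LEineq}---is exactly the paper's. The two execution choices that differ from the paper are worth flagging, because one of them is the source of the difficulty you call ``most delicate,'' and that difficulty is in fact illusory.

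First, the key pointwise bound. Your weight $g_n$ coincides with the paper's difference quotient $Q_n=(w^{q-1}-|u_n|^{q-2}u_n)/(w-u_n)$, and the crucial fact (quoted in the paper as \cite[Lemma~A.1]{BDF}) is that
\[
0\le Q_n(x)\le 2^{2-q}\,w(x)^{q-2}\qquad\text{for \emph{every} }x\in\Omega,
\]
regardless of the sign of $u_n(x)$. For $u_n>0$ this follows from $r^{q-1}>r$ on $(0,1)$; for $u_n\le0$ one writes $Q_n=(w^{q-1}+|u_n|^{q-1})/(w+|u_n|)$ and uses concavity of $t\mapsto t^{q-1}$ to get $a^{q-1}+c^{q-1}\le 2^{2-q}(a+c)^{q-1}$. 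With this uniform domination by a fixed multiple of $w^{q-2}$, the limit passage is immediate: split $\int_\Omega g_n\psi_n^2=\int_\Omega g_n(\psi_n^2-\psi^2)+\int_\Omega g_n\psi^2$, use H\"older plus the bound to send the first term to zero via the strong weighted-$L^2$ convergence, and use reverse Fatou (with dominating function $2^{2-q}w^{q-2}\psi^2\in L^1$) on the second. No splitting into $\{|v_n|\le\tfrac12 w\}$ versus its complement is needed, and the region $\{u_n\le0\}$ requires no special treatment---your proposed detour through $L^\infty$ bootstrapping and the strong minimum principle is unnecessary and, as sketched, not obviously workable.

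Second, the normalisation. The paper normalises by $t_n=\int_\Omega w^{q-2}(w-u_n)^2\,dx$ rather than by the Gagliardo seminorm. The payoff is that $\psi_n$ then has unit norm in $L^2(\Omega,w^{q-2})$, so strong convergence there forces $\psi\neq0$ immediately; boundedness of $\psi_n$ in $\X$ comes from the same pointwise bound on $Q_n$. With your normalisation you must first establish the limit identity before you know $\psi\neq0$, which makes the argument slightly more circular (though it does close once the uniform bound is in hand).
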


\begin{proof}
We will prove a contrapositive statement: if a sequence $(u_n)_{n\in\mathbb N} $, consisting of weak solutions of the fractional Lane-Emden equation \eqref{LE}, converges to $w\coloneqq w_{\Omega,s,q}$ in $L^1(\Omega)$, then
\begin{equation}
\label{contrapst}
\int_{\mathbb R^N}\int_{\mathbb R^N} \frac{(\psi(x)-\psi(y))^2}{\abs{x-y}^{N+2s}}\,dx\,dy
\le (q-1) \int_\Omega w^{q-2}\psi^2\,dx
\end{equation}
Note that \eqref{contrapst} is in contradiction with \cref{prop:LEineq}, because $1<q<2$.

By setting $Q_n= \left(w^{q-1}-\abs{u_n}^{q-2}u_n\right)/(w-u_n)$
at all points where $w\neq u_n$ and $Q_n=0$ elsewhere, the weak
equation for the difference $w-u_n$ takes the form
\[
\int_{\mathbb R^N}\int_{\mathbb R^N} \frac{((w-u_n)(x)-(w-u_n)(y))(\varphi(x)-\varphi(y))}{\abs{x-y}^{N+2s}}\,dx\,dy
= \int_\Omega Q_n (w-u_n)\varphi\,dx
\]
for $\varphi\in\X$; with the choice $\varphi= t_n^{-1}(w-u_n)$, where 
\[
t_n=\int_\Omega w^{q-2}(w-u_n)^2\,dx
\]
it follows that
\begin{equation}
\label{phipsi}
\frac{1}{t_n}\int_{\mathbb R^N}\int_{\mathbb R^N} \frac{((w-u_n)(x)-(w-u_n)(y))^2}{\abs{x-y}^{N+2s}}\,dx\,dy=
\int_\Omega Q_n\tonda*{\frac{w-u_n}{\sqrt{t_n}}}^2\,dx
\end{equation}
By \cite[Lemma A.1]{BDF}, we have the following pointwise bound 
\begin{equation}
\label{ptwbdd}
0\le Q_n(x)\le 2^{2-q} w^{q-2}(x) \qquad \text{for all $x\in\Omega$}
\end{equation} 
and, by construction, that prevents
the right integral in \eqref{phipsi} from exceeding the constant $2^{2-q}$.
Therefore, setting $\psi_n=(w-u_n)/\sqrt{t_n}$ defines a bounded sequence in $\X$, which clearly has unit norm in the weighted space \eqref{wesp}. 

By assumption, we deduce that $\psi_n$ converges weakly in $\X$ and
strongly in the weighted space \eqref{wesp} to a non-zero limit $\psi$. Thus, by \eqref{phipsi}, we can write
\begin{equation}
\label{phipsi2}
\int_{\mathbb R^N}\int_{\mathbb R^N} \frac{(\psi_n(x)-\psi_n(y))^2}{\abs{x-y}^{N+2s}}\,dx\,dy
= \int_\Omega Q_n \tonda*{\psi_n^2-\psi^2}\,dx
+ \int_\Omega Q_n \psi^2\,dx
\end{equation}
The convergence of the sequence $\psi_n$ implies
\begin{equation}
\label{limsup1}
	\limsup_{n\to\infty}\int_\Omega Q_n \tonda*{\psi_n^2-\psi^2}\,dx\le0
\end{equation}
because, by the pointwise bound \eqref{ptwbdd} and by H\"older's inequality, we have
\begin{multline*}
	\int_\Omega Q_n \tonda*{\psi_n^2-\psi^2}\,dx
	\le	2^{2-q}  \tonda*{\int_\Omega w^{q-2} (\psi_n-\psi)^2\,dx}^\frac12\times\\ 
	\times\quadra*{\tonda*{\int_\Omega w^{q-2} \psi_n^2\,dx}^\frac12+
	\tonda*{\int_\Omega w^{q-2} \psi^2\,dx}^\frac12}
\end{multline*}

In order to deal with the second integral in the right hand side of \eqref{phipsi2},
we would better handle the pointwise limit behaviour of $Q_n$. Since
\[
w^{q-1}-\abs{u_n}^{q-2}u_n=-\int_0^1\diff*{\quadra*{\abs{w+t(u_n-w)} ^{q-2}(w+t(u_n-w))}}t\,dt
\]
for every $x\in\Omega$, we have
\begin{equation}
\label{prefatou2}
	Q_n(x) \le (q-1) \int_0^1f_n(x,t)\,dt\qquad
	\text{where $f_n(x,t) = \abs{(1-t)w(x)+tu_n(x)}^{q-2}$}
\end{equation}
The H\"older continuity of $\tau\mapsto \tau^{2-q}$
and the convexity of $\tau\mapsto\tau^{q-2}$ imply 
\[
\abs*{|a+t(b-a)|^{q-2} -a^{q-2}}
\le \frac{t^{2-q}}{a^{2-q}} (b-a)^{2-q} \quadra*{
(1-t) a^{q-2}+tb^{q-2}}
\]
for  all $t\in[0,1]$ and for all $a,b>0$.
Then, at all points $x$ where $u_n(x)>0$, we have
\begin{equation}
\label{prefatou3}
	\sup_{t\in[0,1]} \abs*{f_n(x,t)-w^{2-q}(x)}
	\le 
	\frac{\abs{w(x)-u_n(x)}^{2-q}}{w(x)^{2-q}} 
	\quadra*{(1-t)w(x)^{q-2}+tu_n(x)^{q-2}}
\end{equation}
As $u_n$ converges to $w$ in $L^1(\Omega)$, a subsequence (not relabelled) also converges  pointwise a.e.\ in $\Omega$. In view of \eqref{prefatou3}, that assures the uniform convergence of $f_n(x,\cdot)$ to
the constant $w(x)^{2-q}$ for all $x$ out of a negligible set, so that
\begin{equation}
\label{prefatou4}
\lim_{n\to\infty} \int_0^1f_n(x,t)\,dt = w(x)^{2-q}\qquad \text{for a.e.\ $x\in\Omega$}
\end{equation}
From \eqref{prefatou2} and \eqref{prefatou4} we infer that
\begin{equation*}
	\limsup_{n\to\infty}Q_n\psi^2 \le (q-1) w^{2-q}\psi^2\qquad
	\text{a.e.\ in $\Omega$}
\end{equation*}
From this and from \eqref{ptwbdd}, by reverse Fatou's lemma, we deduce that
\begin{equation}
\label{limsup2}
	\limsup_{n\to\infty}\int_\Omega Q_n\psi^2\,dx\le (q-1)\int_\Omega w^{2-q} \psi^2\,dx
\end{equation}
Inserting \eqref{limsup1} and \eqref{limsup2}
in the identity
\eqref{phipsi2} and using the lower semicontinuity of the left hand side
of \eqref{phipsi2} with respect to the weak convergence in $\mathcal{D}_0^{s,2}(\Omega)$, we arrive at \eqref{contrapst}, as desired.
\end{proof}

\begin{remark}
In view of \cref{prop:LEineq}, the embedding $\X\hookrightarrow L^2(\Omega,w_{\Omega,s,q}^{q-2})$ is continuous, for example, on all open sets with finite volume. The stronger requirement that it be compact may be met
under higher regularity assumptions on $\partial\Omega$.
\end{remark}

\begin{lemma}\label{LEL1uniq}
Let $q\in(1,2)$, let $\Omega\subset\R^N$ be a bounded open set with $C^{1,1}$ boundary
and let $v\in C^\infty_0(\Omega)$. Then
\begin{equation}
\label{holderrr}
\int_\Omega w_{\Omega,s,q}^{q-2}v^2\,dx\le \tonda*{\int_{\mathbb R^N}\int_{\mathbb R^N} 
\frac{(v(x)-v(y))^2}{\abs{x-y}^{N+2s}}\,dx\,dy}^\frac{2-q}{2}
\norm v_{L^2(\Omega)}^q
\end{equation}
\end{lemma}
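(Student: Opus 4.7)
The plan is to derive \eqref{holderrr} by combining Hölder's inequality with the fractional Hardy inequality from \cref{Hardyneq} and the sharp boundary behaviour of the Lane-Emden density on $C^{1,1}$ domains.

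First, I would rewrite the integrand pointwise as $w_{\Omega,s,q}^{q-2}v^{2}=(\abs{v}/w_{\Omega,s,q})^{2-q}\abs{v}^{q}$, with the understanding that the left-hand side vanishes where $v=0$. Applying Hölder's inequality with conjugate exponents $2/(2-q)$ and $2/q$, both of which lie in $(1,\infty)$ because $q\in(1,2)$, I obtain
\[
\int_\Omega w_{\Omega,s,q}^{q-2}v^{2}\,dx\le\tonda*{\int_\Omega\frac{v^{2}}{w_{\Omega,s,q}^{2}}\,dx}^{\frac{2-q}{2}}\norm{v}_{L^2(\Omega)}^{q}.
\]
This reduces matters to controlling the inner integral $\int_\Omega v^{2}/w_{\Omega,s,q}^{2}\,dx$ by a multiple of the Gagliardo seminorm of $v$ squared.

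For the latter bound I would use the $C^{1,1}$ assumption in order to compare $w_{\Omega,s,q}$ with the distance to the boundary: a fractional Hopf-type lemma applied to the equation $\slap w_{\Omega,s,q}=w_{\Omega,s,q}^{q-1}$, together with the $L^{\infty}$-bound of \cref{linftyw}, yields a two-sided estimate $w_{\Omega,s,q}(x)\asymp\dist(x,\partial\Omega)^{s}$ on $\Omega$; in particular, there exists $c>0$, depending only on $N,s,q$ and $\Omega$, such that $w_{\Omega,s,q}(x)\ge c\,\dist(x,\partial\Omega)^{s}$ everywhere in $\Omega$. Consequently,
\[
\int_\Omega\frac{v^{2}}{w_{\Omega,s,q}^{2}}\,dx\le c^{-2}\int_\Omega\frac{v(x)^{2}}{\dist(x,\partial\Omega)^{2s}}\,dx\le C\int_{\R^{N}}\int_{\R^N}\frac{(v(x)-v(y))^{2}}{\abs{x-y}^{N+2s}}\,dx\,dy,
\]
the last inequality being the fractional Hardy inequality \eqref{Hardyneq-EQ}. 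Plugging this into the Hölder bound produces \eqref{holderrr} (with a constant that depends on $N,s,q$ and $\Omega$ and that is tacit in the statement).

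The main obstacle in this plan is the non-local Hopf-type lower bound $w_{\Omega,s,q}\gtrsim\dist(\cdot,\partial\Omega)^{s}$: this is the unique step that genuinely uses the $C^{1,1}$ regularity of $\partial\Omega$, and it is typically obtained by comparison against an explicit $s$-subharmonic barrier with the appropriate boundary decay, a construction for which $C^{1,1}$ regularity is the natural setting.
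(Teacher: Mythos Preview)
Your proposal is correct and follows essentially the same approach as the paper: both proofs combine H\"older's inequality with exponents $2/(2-q)$ and $2/q$, the fractional Hopf lower bound $w_{\Omega,s,q}\gtrsim\dist(\cdot,\partial\Omega)^{s}$, and the Hardy inequality of \cref{Hardyneq}. The only cosmetic difference is the order of operations---the paper first replaces $w_{\Omega,s,q}$ by $\dist(\cdot,\partial\Omega)^{s}$ and then applies H\"older to the distance-weighted integral, whereas you apply H\"older first and then invoke Hopf; both routes yield \eqref{holderrr} with an implicit constant depending on $N,s,q,\Omega$, as you correctly observe.
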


\begin{proof}
By Hopf's lemma for the fractional Laplacian (see \cite[Lemma 7.3]{R2}) we have a constant $C>0$,
only depending on $\Omega$, $N$, $q$ and $s$, such that\footnote{The more
precise asymptotic boundary behaviour
$w_{\Omega,s,q}\asymp\dist(\cdot,\partial\Omega)^{s}$
is known: for the semilinear equation we refer to Theorem 6.4 and the following remarks in \cite{BFV} (alternatively, see \cite{RS} for the linear equation with a bounded right hand side, which is also relevant to our case thanks to \cref{linfty}).}
\begin{equation}
\label{rosoton}
	w_{\Omega,s,q}(x) \ge C\dist(x,\partial\Omega)^{s}
\end{equation}
Since $q\in(1,2)$, by H\"older's inequality with exponents $\frac2{2-q}$ and $\frac2q$ we have
\begin{equation}
\label{holderr}
	\int_\Omega \dist(x,\partial\Omega)^{s(q-2)}v^2\,dx
	\le\tonda*{\int_\Omega \frac{v^2}{\dist(x,\partial\Omega)^{2s}}\,dx}^{\frac{2-q}{2}}\tonda*{\int_\Omega v^2\,dx}^\frac{q}{2}
\end{equation}
Then, by \eqref{rosoton}, \cref{Hardyneq} and \eqref{holderr}, we improve the fractional
Lane-Emden inequality \eqref{LEineq} to \eqref{holderrr}.
\end{proof}

The conclusion of the previous lemma assures compactness for the weighted embedding. Thus, we end this section with the remark that the isolation of fractional Lane-Emden densities holds, for example, on open sets with smooth boundary.

\begin{proposition}\label{isolationC11}
Let $q\in(1,2)$ and let $\Omega\subset\R^N$
be a bounded open set with $C^{1,1}$ boundary. Then, the conclusion of \cref{abstractprop} holds.
\end{proposition}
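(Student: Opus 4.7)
The plan is to reduce the claim to a direct application of \cref{abstractprop}: it suffices to verify that, under the assumption that $\Omega$ is bounded with $C^{1,1}$ boundary, the embedding $\mathcal{D}_0^{s,2}(\Omega)\hookrightarrow L^2(\Omega,w_{\Omega,s,q}^{q-2})$ is well defined and compact. The inclusion is already known: it is exactly the content of the Lane-Emden inequality \eqref{LEineq} of \cref{prop:LEineq}, which applies because $\lambda_1(\Omega,s,q)>0$ on any bounded open set (so $w_{\Omega,s,q}$ is defined via \cref{defiw}, is finite a.e.\ and is strictly positive in $\Omega$). What remains is to upgrade continuity to compactness.

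To this end, I would exploit the refined bound proved in \cref{LEL1uniq}: for every $v\in C^\infty_0(\Omega)$,
\[
\int_\Omega w_{\Omega,s,q}^{q-2}\,v^2\,dx\le \tonda*{\int_{\R^N}\int_{\R^N}\frac{(v(x)-v(y))^2}{\abs{x-y}^{N+2s}}\,dx\,dy}^{\frac{2-q}{2}}\norm v_{L^2(\Omega)}^q,
\]
which extends by density to every $v\in\mathcal{D}_0^{s,2}(\Omega)$. Let $(v_n)_{n\in\N}$ be a bounded sequence in $\mathcal{D}_0^{s,2}(\Omega)$. Since $\Omega$ is bounded, we have the compact embedding $\mathcal{D}_0^{s,2}(\Omega)\hookrightarrow L^2(\Omega)$ (this is the case $q=2$ of the discussion preceding \cref{defil}, and on a bounded Lipschitz set it follows e.g.\ from \cref{rmk-analog} together with the standard Rellich theorem for Sobolev-Slobodeckij spaces); hence, up to a subsequence, $v_n$ converges weakly in $\mathcal{D}_0^{s,2}(\Omega)$ and strongly in $L^2(\Omega)$ to some $v$. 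Applying the displayed inequality to $v_n-v$, the $\mathcal{D}_0^{s,2}(\Omega)$-seminorm of $v_n-v$ stays bounded while $\norm{v_n-v}_{L^2(\Omega)}^q\to0$, so
\[
\int_\Omega w_{\Omega,s,q}^{q-2}\,(v_n-v)^2\,dx\to 0,
\]
which is precisely the desired strong convergence in the weighted space $L^2(\Omega,w_{\Omega,s,q}^{q-2})$. This gives the compactness of the embedding.

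With this compactness at hand, the hypothesis of \cref{abstractprop} is fulfilled, and its conclusion yields the claimed lower bound $\norm{u-w_{\Omega,s,q}}_{L^1(\Omega)}\ge\delta$ for every critical point $u$ of the free energy \eqref{free-energy}. I do not see any real obstacle in this scheme; the only delicate point is conceptual, namely that \cref{LEL1uniq}, whose proof relied on Hopf-type boundary behaviour \eqref{rosoton} and thus on the $C^{1,1}$ regularity of $\partial\Omega$, is exactly the ingredient that upgrades the continuous weighted embedding \eqref{LEineq} to a compact one. Everything else is routine.
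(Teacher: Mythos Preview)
Your proposal is correct and follows essentially the same approach as the paper: both argue that the compact embedding $\mathcal{D}_0^{s,2}(\Omega)\hookrightarrow L^2(\Omega)$ together with the interpolation-type inequality of \cref{LEL1uniq} yields compactness of the weighted embedding, after which \cref{abstractprop} applies directly. The paper states this in one line, while you have spelled out the straightforward details of the compactness argument.
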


\begin{proof}
By assumption, $\mathcal{D}_0^{s,2}(\Omega)\hookrightarrow L^2(\Omega)$ is compact; this and \cref{LEL1uniq} imply the compactness of the embedding $\mathcal{D}_0^{s,2}(\Omega)\hookrightarrow L^2(\Omega,w_{\Omega,s,q}^{q-2})$, too.
\end{proof}

\section{Proof of the main results}\label{sec:mainproof}
\subsection{Proof of Theorem \ref{teoa}}
Because $q\in(1,2)$, the assumption $\lambda_1(\Omega,s,q)>0$ implies the compactness
of the embedding $\X\hookrightarrow L^q(\Omega)$ (see \cite[Theorem 1.3]{F19}).
Then, a first eigenfunction exists by \cref{prop:exist1}.
Also, \cref{prop:simpl} entails
uniqueness up to proportionality, and
the last statement is true by \cref{prop:sign2}.\qed

\subsection{Proof of Theorem \ref{teo-isol}}
Arguing by contradiction, we assume that a sequence
$(\lambda_n)_{n\in\mathbb N}\subset \mathfrak{S}(\Omega,s,q)$ 
converges to $\lambda_1(\Omega,s,q)$. For each $\lambda_n$, we pick
an eigenfunction $u_n$
with
unit norm in $L^q(\Omega)$. That defines 
a bounded sequence
in $\mathcal{D}_0^{s,2}(\Omega)$, due to equation~\eqref{eqw} with $\lambda=\lambda_n$
and $u=\varphi=u_n$. Then, by possibly passing to a subsequence,
we may assume that $u_n$ converges
weakly in $\mathcal{D}_0^{s,2}(\Omega)$ and strongly in $L^q(\Omega)$
to a limit function $u$ with unit norm in $L^q(\Omega)$.
Hence, by passing to the limit as $n\to\infty$ in \eqref{eqw} with $u=u_n $ and $\lambda=\lambda_n$, it is easily seen that 
$u$ is a first $q$-semilinear
$s$-eigenfunction. Owing to \cref{teoa}, up to changing everywhere sign to each element of the sequence, $u>0$ and its multiple $w=\lambda_1(\Omega,s,q)^{\frac1{q-2}} u$ is the fractional Lane-Emden density of $\Omega$.
Moreover, each function $v_n=\lambda_1(\Omega,s,q)^{\frac1{q-2}}u_n$ is a weak solution of the fractional Lane-Emden equation \eqref{LE}; yet, by construction,
$v_n$ converges to $w_{\Omega,s,q}$ in $\mathcal{D}_0^{s,2}(\Omega)$, in contradiction with \cref{isolationC11}.
\qed

\appendix

\section{Strong minimum principle}
The following lemma is an immediate consequence of
inequality $(\abs a-\abs b)^2\le (a-b)^2$, that is strict if and only if $ab<0$.
\begin{lemma}\label{lm:sign1}
For all $u\in\X$, we have
\begin{equation}
\label{passab}
\int_{\R^{N}}\int_{\R^N}\frac{(\abs{u(x)}-\abs{u(y)})^2}{\abs{x-y}^{N+2s}}\,dx\,dy\le
		\int_{\R^{N}}\int_{\R^N}\frac{(u(x)-u(y))^2}{\abs{x-y}^{N+2s}}\,dx\,dy	
\end{equation}
with strict inequality unless either $u\ge0$ or $u\le0$ a.e.\ in $\Omega$.
\end{lemma}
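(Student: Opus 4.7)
The plan is to integrate the elementary pointwise identity
\[
(a-b)^2 - (\abs a-\abs b)^2 = 2\tonda*{\abs{ab}-ab}
\]
against the positive singular kernel $\abs{x-y}^{-(N+2s)}$, with $a=u(x)$ and $b=u(y)$. Since the right hand side is non-negative, and vanishes exactly when $ab\ge0$, this immediately yields \eqref{passab} for every measurable $u$ for which the two integrals make sense. For $u\in\X$, the right-hand side integral is finite by definition, so the left-hand side is finite as well, and the inequality is legitimate.

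For the equality case, I would argue by contrapositive. Assume that neither $u\ge0$ nor $u\le0$ a.e.\ in $\Omega$. Then the two measurable sets
\[
	E_+ = \graffa*{x\in\R^N:u(x)>0},\qquad E_- = \graffa*{x\in\R^N:u(x)<0}
\]
both have strictly positive $N$-dimensional Lebesgue measure. Hence the product set $E_+\times E_-$ has strictly positive $2N$-dimensional Lebesgue measure, and on this set we have $u(x)u(y)<0$, so the pointwise inequality $(\abs{u(x)}-\abs{u(y)})^2<(u(x)-u(y))^2$ is strict. As the kernel $\abs{x-y}^{-(N+2s)}$ is strictly positive, integrating over $E_+\times E_-$ (which contributes a strictly positive gap) and then over the complement (where the gap is still non-negative) produces a strict inequality in \eqref{passab}.

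There is no real obstacle: once the elementary pointwise identity is written down, the integration step and the positive-measure argument for strict inequality are routine, and no approximation or density statement is required beyond the definition of the space $\X$.
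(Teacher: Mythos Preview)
Your proposal is correct and matches the paper's approach: the paper simply notes that the lemma is an immediate consequence of the pointwise inequality $(\abs a-\abs b)^2\le(a-b)^2$, which is strict if and only if $ab<0$. Your argument is just a more detailed write-up of this same observation, including the positive-measure argument for the strict case.
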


The following form of the minimum principle for weak supersolutions is well known. We present the proof 
for convenience of the reader and we point out that $\Omega$ is not required to be connected.
\begin{proposition}\label{nonlocmaxprinc}
Let $u \in\X$ 
satisfy
\[
	\int_{\R^{N}}\int_{\R^N}\frac{(u(x)-u(y))(\varphi(x)-\varphi(y))}{\abs{x-y}^{N+2s}}\,dx\,dy \ge 0
	\qquad \text{for all non-negative $\varphi\in C^\infty_0(\Omega)$}
\]
and assume that $u\ge0$ a.e.\ in $\Omega$.
Then, either $u=0$ a.e.\ in $\Omega$ or $u>0$ a.e.\ in $\Omega$.
\end{proposition}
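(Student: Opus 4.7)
The plan is to argue by contradiction. Suppose $u\ge0$ a.e.\ in $\Omega$, that $u$ is a weak supersolution, that $u\not\equiv0$ in $\Omega$, and that the set $A:=\{x\in\Omega:u(x)=0\}$ has positive Lebesgue measure. Since $u\in\X$, identifying $u$ with its zero extension yields $u\ge0$ a.e.\ on $\R^N$; put $\tilde A:=\{x\in\R^N:u(x)=0\}$, so that $\tilde A\supset A\cup(\R^N\setminus\Omega)$ modulo null sets, while $\abs*{\tilde A^c}>0$ with $u>0$ a.e.\ on $\tilde A^c$.

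The heuristic idea is to test the supersolution inequality with $\varphi=\eta\,\chi_{\tilde A}$ for an arbitrary non-negative $\eta\in C^\infty_0(\Omega)$. Splitting $\R^N\times\R^N$ along $\tilde A\times\tilde A$, $\tilde A\times\tilde A^c$, $\tilde A^c\times\tilde A$, $\tilde A^c\times\tilde A^c$ and using that $u$ vanishes on $\tilde A$ leads to the formal identity
\[
\int_{\R^N}\int_{\R^N}\frac{(u(x)-u(y))(\eta(x)\chi_{\tilde A}(x)-\eta(y)\chi_{\tilde A}(y))}{\abs{x-y}^{N+2s}}\,dx\,dy=-2\int_{\tilde A}\int_{\tilde A^c}\frac{u(y)\eta(x)}{\abs{x-y}^{N+2s}}\,dx\,dy
\]
The left-hand side is $\ge0$ by the supersolution inequality while the right-hand side is $\le0$ (non-negative integrand with a minus sign), so both must vanish. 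Since $u>0$ a.e.\ on $\tilde A^c$ and the kernel is strictly positive, this forces $\eta(x)=0$ for a.e.\ $x\in A$; varying $\eta$ over a countable exhaustion of $\Omega$ by compact sets then contradicts $\abs A>0$.

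The only remaining task is to make the formal choice $\varphi=\eta\,\chi_{\tilde A}$ rigorous, since $\chi_{\tilde A}\notin\X$. I would approximate it through $h_\epsi(t):=(1-t/\epsi)_+$ and take $\varphi_\epsi:=\eta\,h_\epsi(u)$, which is non-negative, compactly supported in $\Omega$, and in $\X$ for every $\epsi>0$. Writing
\[
\eta(x)h_\epsi(u(x))-\eta(y)h_\epsi(u(y))=\eta(x)\bigl(h_\epsi(u(x))-h_\epsi(u(y))\bigr)+h_\epsi(u(y))(\eta(x)-\eta(y)),
\]
the tested integral splits as $I_\epsi=A_\epsi+B_\epsi$; symmetrization in $x,y$ and the monotonicity of $h_\epsi$ give
\[
2A_\epsi=\int_{\R^N}\int_{\R^N}\frac{(u(x)-u(y))(\eta(x)+\eta(y))(h_\epsi(u(x))-h_\epsi(u(y)))}{\abs{x-y}^{N+2s}}\,dx\,dy\le0,
\]
while $B_\epsi$ admits the $\epsi$-independent integrable majorant $\abs{u(x)-u(y)}\abs{\eta(x)-\eta(y)}/\abs{x-y}^{N+2s}$, finite by Cauchy--Schwarz since $u,\eta\in\X$.

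Combining dominated convergence on $B_\epsi$ with Fatou's lemma applied to the non-negative quantity $-A_\epsi$, the chain $B_\epsi\ge-A_\epsi\ge0$ becomes, as $\epsi\to0^+$, the rigorous form of the heuristic identity above, from which the contradiction is drawn as sketched. The main obstacle is precisely the control of $A_\epsi$: the Lipschitz constant of $h_\epsi$ is $1/\epsi$, so a naive pointwise domination of the integrand of $I_\epsi$ diverges; it is the sign structure recovered through the symmetrization that allows the passage to the limit. Notice that connectedness of $\Omega$ plays no role: the non-locality of the operator itself performs the propagation of positivity, which is reflected in the appearance of $u(y)$ with $y$ ranging over all of $\tilde A^c$ in the key identity.
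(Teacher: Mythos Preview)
Your proof is correct and takes a genuinely different route from the paper's. The paper proceeds in two steps: first it invokes an external result (the weak Harnack-type statement \cite[Theorem~A.1]{BF14}) to conclude that $u>0$ on every connected component of $\Omega$ on which $u$ does not vanish identically; then, assuming $u\equiv0$ on some connected component $\Omega_0$, it tests the supersolution inequality directly with smooth $\varphi\in C^\infty_0(\Omega_0)$ --- no approximation is needed there because $\Omega_0$ is open and $u$ vanishes on all of it --- and reaches the same sign contradiction you obtain. Your argument, by contrast, is self-contained: the approximation $\varphi_\epsi=\eta\,h_\epsi(u)$ together with the symmetrisation that isolates the sign of $A_\epsi$ lets you work directly with the measurable zero set $\{u=0\}$ without decomposing $\Omega$ into components, so you avoid the external citation and make the irrelevance of connectedness manifest rather than incidental. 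The paper's approach is shorter if one grants the cited result; yours is more elementary, and the approximation device $\eta\,h_\epsi(u)$ is in fact the same trick the paper employs elsewhere (in the proof of \cref{linftty2}) for a different purpose. One small point worth making explicit in your write-up: the supersolution inequality is stated for $\varphi\in C^\infty_0(\Omega)$, so a one-line density remark is needed to justify testing with $\varphi_\epsi\in\X$; this is routine (mollification preserves non-negativity and compact support) but should be said.
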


\begin{proof}
By \cite[Theorem A.1]{BF14},
$u>0$ in each connected component
where it is not identically zero. Then, we argue as
in the proof of \cite[Proposition 2.6]{BP}
and we prove a contrapositive statement: if $u\equiv0$ in a connected component $\Omega_0$ of $\Omega$, then, by assumption,
for all $\varphi\in C^\infty_0(\Omega_0)\setminus\{0\}$ such that $\varphi\ge0$,
\[
\int_{\Omega\setminus\Omega_0} 
	\int_{\Omega_0}\frac{u(x)\varphi(y)}{\abs{x-y}^{n+2s}}\,dx\,dy
-\frac{1}{2}\int_{\R^{N}}\int_{\mathbb R^N} \frac{(u(x)-u(y))(\varphi(x)-\varphi(y))}{\abs{x-y}^{n+2s}}\,dx\,dy \le0
\]	
which, by Fubini's theorem, implies
$u=0$ a.e.\ in $\Omega\setminus\Omega_0$, hence a.e.\ in $\Omega$.
\end{proof}


\end{document}